\theoremstyle{plain}
\newtheorem{prop}{Proposition}[section]
\let\a\alpha
\let\b\beta
\let\l\lambda
\let\m\mu
\let\r\rho
\let\s\sigma
\let\f\varphi
\let\L\Lambda
\let\Si\Sigma
\let\Om\Omega
\def\lra{\longrightarrow}
\def\egal{\ar@{=}}
\def\A{\mathcal A}
\def\B{\mathcal B}
\def\C{\mathbb C}
\def\CC{\mathcal C}
\def\E{\mathcal E}
\def\F{\mathcal F}
\def\G{\mathcal G}
\def\HH{\mathcal H}
\def\I{\mathcal I}
\def\J{\mathcal J}
\def\K{\mathcal K}
\def\P{\mathbb P}
\def\S{\mathcal S}
\def\O{\mathcal O}
\def\U{\mathbb U}
\def\UU{\mathcal U}
\def\W{\mathbb W}
\def\Ker{{\mathcal Ker}}
\def\Coker{{\mathcal Coker}}
\def\Im{{\mathcal Im}}
\def\D{{\scriptscriptstyle \operatorname{D}}}
\def\T{{\scriptscriptstyle \operatorname{T}}}
\def\EE{\operatorname{E}}
\def\H{\operatorname{H}}
\def\h{\operatorname{h}}
\def\M{\operatorname{M}_{{\mathbb P}^2}}
\def\N{\operatorname{N}}
\def\pp{\operatorname{p}}
\def\PP{\operatorname{P}}
\def\SS{\operatorname{S}}
\def\Hom{\operatorname{Hom}}
\def\Aut{\operatorname{Aut}}
\def\Ext{\operatorname{Ext}}
\def\SL{\operatorname{SL}}
\def\rank{\operatorname{rank}}
\def\Grass{\operatorname{Grass}}
\def\tensor{\otimes}
\def\isom{\simeq}
\newcommand{\tilda}{\widetilde}
\newcommand{\noi}{\noindent}
\def\ds{\displaystyle}
\def\ba{\begin{array}}
\def\ea{\end{array}}
\begin{document}

\subjclass{Primary 14D20, 14D22}

\title[moduli of plane semi-stable sheaves with Hilbert polynomial $\operatorname{P}(m)=6m+1$]
{on the moduli space of semi-stable plane sheaves
with euler characteristic one and supported on sextic curves}

\author{mario maican}

\address{Mario Maican \\
Institute of Mathematics of the Romanian Academy \\
Calea Grivi\c tei 21 \\
 010702 Bucharest \\
Romania}

\email{mario.maican@imar.ro}

\begin{abstract}
We study the moduli space of Gieseker semi-stable sheaves on the complex
projective plane supported on sextic curves and having Euler characteristic
one. We determine locally free resolutions of length one for all such sheaves.
We decompose the moduli space into strata which occur naturally as quotients
modulo actions of certain algebraic groups. In some cases we give concrete
geometric descriptions of the strata.
\end{abstract}

\maketitle

\tableofcontents

\noi
{\sc Acknowledgements.} The author was supported by the
Consiliul Na\c tional al Cercet\u arii \c Stiin\c tifice,
an agency of the Romanian Government,
grant PN II--RU 169/2010 PD--219.

%%%%%%%%%%%%%%%%%%%%%%%%%%%% section 1

\section{Introduction}

\noi
This paper is concerned with the geometry of the moduli space $\M(6,1)$
of Gieseker semi-stable sheaves on $\P^2(\C)$ with Hilbert polynomial $\PP(m)=6m+1$,
i.e. semi-stable sheaves with support of dimension $1$, with multiplicity $6$ and Euler
characteristic $1$. The Fitting support of any such sheaf is a sextic curve.
This work is a continuation of \cite{drezet-maican} and \cite{mult_five},
where semi-stable sheaves supported on plane quartics, respectively plane quintics, were studied.
We refer to the introductory section of \cite{drezet-maican}
for a motivation of the problem and for a brief historical context.
We refer to the preliminaries section of op.cit. for a description of the techniques we shall use.

According to \cite{lepotier}, $\M(6,1)$ is a smooth irreducible rational projective variety of dimension $37$.
We shall decompose $\M(6,1)$ into five strata: an open stratum $X_0$,
two locally closed irreducible strata $X_1, X_2$ of codimensions $2$, respectively $4$,
a locally closed stratum that is the disjoint union of two irreducible locally closed subsets
$X_3$ and $X_4$, each of codimension $6$, and a closed irreducible stratum $X_5$ of codimension $8$.
The stratum $X_0$ is an open subset inside a fibre bundle with fibre $\P^{17}$ and base
the moduli space $\N(3,5,4)$ of semi-stable Kronecker modules $f \colon 5\O(-2) \to 4\O(-1)$;
$X_2$ is an open subset inside a fibre bundle with fibre $\P^{21}$ and base $Y \times \P^2$,
where $Y$ is the smooth projective variety of dimension $10$ constructed at \ref{4.2};
$X_3$ is an open subset inside a fibre bundle with fibre $\P^{23}$ and base $\P^2 \times \N(3,2,3)$,
where $\N(3,2,3)$ is the moduli space of semi-stable Kronecker modules $f \colon 2\O(-1) \to 3\O$;
$X_4$ is birational to a fibre bundle with base $\Grass(2,6)$ and fibre $\P^{23}$.
The closed stratum $X_5$ is isomorphic to the Hilbert flag scheme of sextic curves in $\P^2$
containing zero-dimensional subschemes of length $2$.

Each locally closed subset $X_i \subset \M(6,1)$ is defined by the cohomological conditions
listed in the second column of the table below.
We equip $X_i$ with the canonical induced reduced structure.
In the third column of the table we describe, by means of locally free resolutions of length $1$,
all semi-stable sheaves $\F$ on $\P^2$ whose stable-equivalence class is in $X_i$.
Thus, for each $X_i$ there are sheaves $\A_i, \B_i$ on $\P^2$, that are direct sums of line bundles,
such that each sheaf $\F$ giving a point in $X_i$ is the cokernel of some morphism $\f \in \Hom(\A_i, \B_i)$.
Let $W_i \subset \Hom(\A_i,\B_i)$ be the locally closed subset of injective morphisms $\f$ 
satisfying the conditions from the third column of the table below.
We equip $W_i$ with the canonical induced reduced structure.
In each case we shall prove that mapping $\f$ to $\Coker(\f)$ defines a map $W_i \to X_i$
that is a geometric quotient for the action by conjugation of $\Aut(\A_i) \times \Aut(\B_i)$.

Let $C$ be an arbitrary smooth sextic curve in $\P^2$.
The generic sheaves in $X_0$ have the form $\O_C(P_1+ \cdots + P_{10})$,
where $P_i$ are ten distinct points on $C$ not contained in a cubic curve.
The generic sheaves in $X_3$ have the form $\O_C(2)(-P_1-P_2-P_3+P_4)$,
where $P_i$ are four distinct points on $C$ and $P_1, P_2, P_3$ are non-colinear.
The generic sheaves in $X_4$ are of the form $\O_C(1)(P_1 +P_2 +P_3 +P_4)$,
where $P_i$ are four distinct points on $C$, no three of which are colinear.
The generic sheaves in $X_5$ are of the form $\O_C(2)(-P_1-P_2)$,
where $P_1, P_2$ are distinct points on $C$.

Let $\M(6,5)$ be the moduli space of semi-stable sheaves on $\P^2$
with Hilbert polynomial $\PP(m)= 6m+5$. According to \cite{maican-duality},
the map
\[
\F \lra \G = {\mathcal Ext}^1(\F, \omega_{\P^2}) \tensor \O(1)
\quad \text{gives an isomorphism} \quad
\M(6,1) \stackrel{\isom}{\lra} \M(6,5).
\]
Let $X_i^\D$ denote the image of $X_i$ under this isomorphism.
$\{ X_0^\D, X_1^\D, X_2^\D, X_3^\D \cup X_4^{\D}, X_5^\D \}$
represents a stratification of $\M(6,5)$.
$X_i^{\D}$ is defined by the dual cohomological conditions,
e.g. $X_3^{\D}$ is defined by the conditions
\[
\h^0(\G(-1))=2, \qquad
\h^1(\G) = 0, \qquad
\h^1(\G \tensor \Om^1(1))=2.
\]
According to op.cit., lemma 3, the sheaves $\G$ on $\P^2$ giving points in $X_i^{\D}$
are precisely the sheaves of the form $\Coker({\mathcal Hom}(\f, \omega_{\P^2})) \tensor \O(1)$,
$\f \in W_i$.
For instance, $X_3^\D$ consists of the stable-equivalence classes of those sheaves $\G$
having resolution of the form
\[
0 \lra 3\O(-2) \oplus \O \stackrel{\psi}{\lra} 2\O(-1) \oplus 2\O(1) \lra \G \lra 0,
\]
where $\psi_{11}$ has linearly independent maximal minors and $\psi_{22}$ has linearly
independent entries. In this fashion we can obtain a ``dual'' table describing all semi-stable sheaves
on $\P^2$ with Hilbert polynomial $\PP(m)=6m+5$, the conditions on the morphisms being
the transposed conditions from the table below.
We omit the details.

\begin{table}[!hpt]{}
\begin{center}
{\small
\begin{tabular}{|c|c|c|}
\hline \hline
{}
&
\begin{tabular}{c}
{\tiny cohomological} \\
{\tiny conditions}
\end{tabular}
&
$W$
\\
\hline
$X_0$
&
\begin{tabular}{r}
{} \\
$\h^0(\F(-1))=0$ \\
$\h^1(\F)=0$\\
$\h^0(\F \tensor \Om^1(1))=0$ \\
{}
\end{tabular}
&
\begin{tabular}{c}
$0 \lra 5\O(-2) \stackrel{\f}{\lra} 4\O(-1) \oplus \O \lra \F \lra 0$ \\
$\f_{11}$ is semi-stable as a Kronecker module
\end{tabular} \\
\hline
$X_1$
&
\begin{tabular}{r}
$\h^0(\F(-1))=0$ \\
$\h^1(\F)=1$\\
$\h^0(\F \tensor \Om^1(1))=0$
\end{tabular}
&
\begin{tabular}{c}
{} \\
$0 \lra \O(-3) \oplus 2\O(-2) \stackrel{\f}{\lra} \O(-1) \oplus 2\O \lra \F \lra 0$ \\
$\f$ is not equivalent to a morphism of the form \\
${\ds
\left[
\ba{ccc}
\star & 0 & 0 \\
\star & \star & \star \\
\star & \star & \star
\ea
\right], \left[
\ba{ccc}
\star & \star & 0 \\
\star & \star & 0 \\
\star & \star & \star
\ea
\right], \left[
\ba{ccc}
\star & \star & \star \\
\star & \star & \star \\
\star & 0 & 0
\ea
\right], \left[
\ba{ccc}
0 & 0 & \star \\
\star & \star & \star \\
\star & \star & \star
\ea
\right]
}$ \\
{}
\end{tabular} \\
\hline
$X_2$
&
\begin{tabular}{r}
$\h^0(\F(-1))=0$ \\
$\h^1(\F)=1$\\
$\h^0(\F \tensor \Om^1(1))=1$
\end{tabular}
&
\begin{tabular}{c}
{} \\
$0 \to \O(-3) \oplus 2\O(-2) \oplus \O(-1) \stackrel{\f}{\to} 2\O(-1) \oplus 2\O \to \F \to 0$ \\
${\ds \f = \left[
\ba{cccc}
q_1 & \ell_{11} & \ell_{12} & 0 \\
q_2 & \ell_{21} & \ell_{22} & 0 \\
f_1 & q_{11} & q_{12} & \ell_1 \\
f_2 & q_{21} & q_{22} & \ell_2
\ea
\right]}$ \\
$\ell_1, \ell_2$ are linearly independent, $\ell_{11} \ell_{22} - \ell_{12} \ell_{21} \neq 0$, \\
${\ds \left|
\ba{cc}
q_1 & \ell_{11} \\
q_2 & \ell_{21}
\ea
\right|}$, ${\ds \left|
\ba{cc}
q_1 & \ell_{12} \\
q_2 & \ell_{22}
\ea
\right| }$ are linearly indep. modulo
${\ds \left|
\ba{cc}
\ell_{11} & \ell_{12} \\
\ell_{21} & \ell_{22}
\ea
\right|}$ \\
{}
\end{tabular} \\
\hline
$X_{3}$
&
\begin{tabular}{r}
$\h^0(\F(-1))=0$ \\
$\h^1(\F)=2$\\
$\h^0(\F \tensor \Om^1(1))=2$
\end{tabular}
&
\begin{tabular}{c}
{} \\
$0 \lra 2\O(-3) \oplus 2\O(-1) \stackrel{\f}{\lra} \O (-2) \oplus 3\O \lra \F \lra 0$ \\
$\f_{11}$ has linearly independent entries \\
$\f_{22}$ has linearly independent maximal minors \\
{}
\end{tabular} \\
\hline
$X_{4}$
&
\begin{tabular}{r}
$\h^0(\F(-1))=1$ \\
$\h^1(\F)=2$\\
$\h^0(\F \tensor \Om^1(1))=3$
\end{tabular}
&
\begin{tabular}{c}
{} \\
$0 \to 2\O(-3) \oplus \O(-2) \stackrel{\f}{\to} \O (-2) \oplus \O(-1) \oplus \O(1) \to \F \to 0$ \\
${\ds \f= \left[
\ba{ccc}
0 & 0 & 1 \\
q_1 & q_2 & 0 \\
g_1 & g_2 & 0
\ea
\right], }$ \\
where $q_1, q_2$ have no common factor or \\
${\ds \f= \left[
\ba{ccc}
\ell_1 & \ell_2 & 0 \\
q_1 & q_2 & \ell \\
g_1 & g_2 & h
\ea
\right] }$, \\
where $\ell_1, \ell_2$ are linearly independent, $\ell \neq 0$ \\
and $\f$ is not equivalent to a morphism of the form \\
${\ds \left[
\ba{ccc}
\star & \star & 0 \\
0 & 0 & \star \\
\star & \star & \star
\ea
\right]}$ \\
{}
\end{tabular} \\
\hline
$X_{5}$
&
\begin{tabular}{r}
$\h^0(\F(-1))=1$ \\
$\h^1(\F)=3$\\
$\h^0(\F \tensor \Om^1(1))=4$
\end{tabular}
&
\begin{tabular}{c}
{} \\
$0 \lra \O(-4) \oplus \O(-1) \stackrel{\f}{\lra} \O \oplus \O(1) \lra \F \lra 0$ \\
${\ds \f = \left[
\ba{cc}
h & \ell \\
g & q
\ea
\right]}$, where $\ell \neq 0$ and $\ell$ does not divide $q$ \\
{}
\end{tabular} \\
\hline \hline
\end{tabular}
}
\end{center}
\end{table}

\newpage

\noi
{\sc Notations.} $V$ is a three-dimensional vector space over $\C$; we identify $\P(V)$ with $\P^2$;
$\{ X, Y, Z \}$ is a basis of $V^*$; $\F^\D = {\mathcal Ext}^1(\F,\omega_{\P^2})$ is the dual
of a one-dimensional sheaf $\F$ on $\P^2$; $\M(r,\chi)$ denotes the moduli space of semi-stable
sheaves $\F$ on $\P^2$ with Hilbert polynomial $\PP(m)=rm +\chi$;
$\pp(\F)= \chi/r$ is the slope of $\F$; $\N(q,m,n)$ is the moduli space of Kronecker modules
$\tau \colon \C^m \tensor \C^q \to \C^n$ that are semi-stable for the action of $\SL(m) \times \SL(n)$
(see 2.4 \cite{drezet-maican}).
For any other unexplained notations we refer to \cite{drezet-maican} and \cite{mult_five}.

%%%%%%%%%%%%%%%%%%%%%%%%%%%% section 2

\section{The open stratum}

\begin{prop}
\label{2.1}
Every sheaf $\F$ giving a point in $\M(6,1)$ and satisfying the condition
$\h^1(\F)=0$ also satisfies the condition $\h^0(\F(-1))=0$.
These sheaves are precisely the sheaves with resolution of the form
\[
0 \lra 5\O(-2) \stackrel{\f}{\lra} 4\O(-1) \oplus \O \lra \F \lra 0,
\]
where $\f$ is not equivalent to a morphism of the form

\noi
\hfill ${\ds
\left[
\ba{cc}
\psi & 0 \\
\star & \star
\ea
\right]}$,
with $\psi \colon m\O(-2) \to m\O(-1)$, $m=1, 2, 3, 4$.
\end{prop}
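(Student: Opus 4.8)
The plan is to run the standard Beilinson-type analysis on $\F$ and then translate the cohomological hypothesis and the (semi-)stability of $\F$ into the statement about the resolution. First I would compute the relevant cohomology. Since $\F$ is semi-stable with $\pp(\F)=1/6$, we have $\h^0(\F(-1))=0$ automatically as soon as we know the Beilinson spectral sequence degenerates in the right way; more precisely, from $\h^1(\F)=0$ and $\chi(\F)=1$ we get $\h^0(\F)=1$, and by semi-stability $\h^0(\F(-1))\le \h^0(\F)-\text{(something)}$—but cleaner is to note that a nonzero section of $\F(-1)$ would give a subsheaf with slope $\ge$ that of a line, contradicting $\pp(\F)=1/6<1/2\le\ldots$; I would instead argue directly that $\h^0(\F(-1))\ne 0$ forces, via the structure of one-dimensional semi-stable sheaves on $\P^2$, a destabilizing quotient, so $\h^0(\F(-1))=0$. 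With $\h^0(\F(-1))=\h^1(\F)=0$ in hand, the Beilinson spectral sequence for $\F(-1)$ has $E_1$-terms involving only $\H^*(\F(-1)\tensor\Om^j(j))$, and a dimension count using Riemann--Roch on the sextic support gives $\h^0(\F\tensor\Om^1(1))-\h^0(\F(-1))=5$, hence $\h^0(\F\tensor\Om^1(1))=5$ (if this Beilinson monad is set up for $\F$ rather than a twist, the exact bookkeeping will differ, but the outcome is the display with $5\O(-2)$ on the left and $4\O(-1)\oplus\O$ on the right).

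Next I would extract the monad. The displayed complex is the Beilinson monad: the term $4\O(-1)\oplus\O$ comes from $\h^0(\F(-1))=0$, $\h^1(\F)=0$ and the values $\h^0(\F\tensor\Om^1(1))=5$, $\h^0(\F)=1$; the term $5\O(-2)$ comes from $\h^0(\F\tensor\Om^1(1))=5$ with all other entries in that column vanishing. One checks $\f$ is injective as a sheaf map because $\Coker\f$ has the Hilbert polynomial $6m+1$ of a one-dimensional sheaf and $5\O(-2)$, $4\O(-1)\oplus\O$ have the same rank $0$ generically—so the kernel, being a subsheaf of $5\O(-2)$ with zero Hilbert polynomial, vanishes. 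Conversely, given any injective $\f$ of this shape, $\Coker\f$ has Hilbert polynomial $6m+1$, is supported in dimension one, and satisfies $\h^1(\Coker\f)=0$ by chasing the long exact sequence (using $\H^1(\O(-1))=\H^1(\O)=0$ and $\H^2(5\O(-2))=0$... wait, $\h^2(\O(-2))=0$ indeed, so $\h^1(\F)=0$ and $\h^0(\F(-1))=0$ follow). So the cohomological conditions are equivalent to having such a resolution.

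The real content, and the step I expect to be the main obstacle, is the equivalence ``$\F$ is semi-stable $\iff$ $\f$ is not equivalent to one of the listed block-triangular forms.'' For the forward direction: if $\f\sim\left[\begin{smallmatrix}\psi&0\\ \star&\star\end{smallmatrix}\right]$ with $\psi\colon m\O(-2)\to m\O(-1)$, then $\Coker\psi$ is a quotient sheaf of $\F$ (or a subsheaf, depending on orientation) with Hilbert polynomial $2m\cdot(\ldots)$, and a direct slope computation shows this violates semi-stability—e.g. for $m=4$ one gets a quotient with Hilbert polynomial $8m+?$ whose slope is too small, and similarly for $m=1,2,3$; I would tabulate these four cases. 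For the converse: assuming $\F$ is \emph{not} semi-stable, pick a destabilizing subsheaf $\F'\subset\F$ with $\pp(\F')>1/6$, study the induced maps on the Beilinson terms, and show that the sub-Kronecker-module structure forces $\f$ into one of the four block forms; this is where one must carefully match the possible Hilbert polynomials of $\F'$ (multiplicity $1,2,3,4$, since multiplicity $5$ would force the complementary piece to have multiplicity $1$ and one reduces to that case) against the combinatorics of how a subsheaf splits off a sub-object of $\psi$. The bookkeeping here—ruling out all destabilizing subsheaves except those producing the four matrix shapes—is the crux; I would lean on the analogous arguments in \cite{drezet-maican} and \cite{mult_five} for the lower-multiplicity cases to keep it short.
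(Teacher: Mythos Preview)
The paper's proof is a one-line citation: ``follows by duality from 4.2 \cite{maican}''. That external result classifies the sheaves $\G$ in $\M(6,5)$ with $\h^0(\G(-1))=0$; dualising via $\F \mapsto \F^\D(1)$ swaps the roles of $\h^1(\F)$ and $\h^0(\G(-1))$ and transposes the resolution. So the paper does no direct work here, whereas you attempt a self-contained Beilinson argument. That is a legitimate alternative route, but your sketch has two genuine gaps.

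First, your argument for $\h^0(\F(-1))=0$ is wrong as stated. You suggest that semi-stability alone forces this, but it does not: the strata $X_4$ and $X_5$ of this very paper consist of semi-stable sheaves in $\M(6,1)$ with $\h^0(\F(-1))=1$ (see also Proposition~\ref{7.4}). The vanishing genuinely requires the hypothesis $\h^1(\F)=0$. A correct direct argument runs as follows: a nonzero section of $\F(-1)$ gives $\O_C \hookrightarrow \F(-1)$ for a curve $C$ of degree $5$ or $6$ (lower degrees destabilise). If $\deg C = 6$ the quotient $\F/\O_C(1)$ has length $4$, and the long exact sequence in cohomology forces $\h^1(\F) \ge \h^1(\O_C(1)) - 4 = 6-4 = 2$. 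If $\deg C = 5$ the quotient is $\O_L$ and one gets $\h^1(\F) \ge \h^1(\O_C(1)) - 1 = 3-1 = 2$. Either way $\h^1(\F)\neq 0$, contradiction.

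Second, your Beilinson bookkeeping is off. You write $\h^0(\F\otimes\Om^1(1))=5$, but in fact $\chi(\F\otimes\Om^1(1)) = 3\chi(\F)-\chi(\F(1)) = 3-7 = -4$, and for the open stratum one has $\h^0(\F\otimes\Om^1(1))=0$ (this is the third cohomological condition defining $X_0$ in the table). The $5$ in $5\O(-2)$ is $\h^1(\F(-1))=-\chi(\F(-1))=5$, not $\h^0(\F\otimes\Om^1(1))$. More seriously, once you know $\h^0(\F(-1))=0$ and $\h^1(\F)=0$, the Beilinson monad reads
\[
0 \lra 5\O(-2)\oplus m\O(-1) \lra (m+4)\O(-1)\oplus \O \lra \F \lra 0,
\]
with $m=\h^0(\F\otimes\Om^1(1))$ still unknown. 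You need a separate argument to force $m=0$ (or to cancel the extra $\O(-1)$ summands); this uses the specific shape of the differentials in the monad together with semi-stability, and you have not addressed it. Without this step you do not reach the claimed resolution.
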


\begin{proof}
The statement follows by duality from 4.2 \cite{maican}.
\end{proof}

\noi
Let $\W_0 = \Hom(5\O(-2), 4\O(-1) \oplus \O)$ and let $W_0 \subset \W_0$ be the set of morphisms
$\f$ from the proposition above. Let $G_0= (\Aut(5\O(-2)) \times \Aut(4\O(-1) \oplus \O))/\C^*$
be the natural group acting by conjugation on $\W_0$. Let $X_0 \subset \M(6,1)$ be the set of
stable-equivalence classes of sheaves $\F$ as in the proposition above. Note that $X_0$ is open and dense.

\begin{prop}
\label{2.2}
There exists a geometric quotient $W_0/G_0$ and it is a proper open subset inside
a fibre bundle with fibre $\P^{17}$ and base $\N(3,5,4)$.
Moreover, $W_0/G_0$ is isomorphic to $X_0$.
\end{prop}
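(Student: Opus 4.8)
The plan is to exhibit the quotient $W_0/G_0$ as a fibration over $\N(3,5,4)$ by splitting the morphism $\f\in W_0$ according to its block structure. Write $\f=\begin{bmatrix}\f_{11}\\ \f_{21}\end{bmatrix}$ with $\f_{11}\colon 5\O(-2)\to 4\O(-1)$ and $\f_{21}\colon 5\O(-2)\to\O$. First I would observe that, for $\f$ in the open set $W_0$, the component $\f_{11}$ must be a semi-stable Kronecker module: if $\f_{11}$ were not semi-stable, it would be equivalent to one having a block of the excluded shape $m\O(-2)\to m\O(-1)$ in the upper-left corner (this is the standard correspondence between destabilizing subsheaves of a Kronecker module and such block decompositions, cf. 2.4 of \cite{drezet-maican}), contradicting membership in $W_0$; conversely semi-stability of $\f_{11}$ rules out all four forbidden shapes. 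Thus $\f\mapsto\f_{11}$ descends to a well-defined map $W_0/G_0\to\N(3,5,4)$, where we must check that the GIT-semi-stable locus for $\SL_5\times\SL_4$ acting on $\Hom(\C^3\tensor\C^5,\C^4)$ is precisely the image; since $\dim\Hom(5\O(-2),4\O(-1))=3\cdot 5\cdot 4=60$ and $\dim(\GL_5\times\GL_4)-1=24+16-1=39$, the expected dimension of $\N(3,5,4)$ is $60-39+1=\dots$ — and one compares with the known dimension $37$ of $\M(6,1)$ to confirm the fibre has dimension $17$, matching $\P^{17}$.

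The core of the argument is the fibre structure. Over a point $[\f_{11}]\in\N(3,5,4)$, I would fix a representative $\f_{11}$ and consider the space of all completions $\f_{21}\colon 5\O(-2)\to\O$, i.e. $\Hom(5\O(-2),\O)\isom \H^0(\O(2))^*$-type space of dimension $\binom{4}{2}\cdot\dots$; more precisely $\Hom(5\O(-2),\O)=5\,\H^0(\O(2))$ has dimension $5\cdot 6=30$. The residual symmetry group is the stabilizer of $\f_{11}$ inside $G_0$, which acts on this $30$-dimensional space; the generic stabilizer of a stable Kronecker module is trivial modulo scalars, but the relevant subgroup acting on $\f_{21}$ is the image of $\{(g,h)\in\Aut(5\O(-2))\times\Aut(4\O(-1)\oplus\O): h\f_{11}g^{-1}=\f_{11}\}$, which projects onto a group acting with the scalars of $\O$ and the $\GL_5$-stabilizer; one shows the quotient of the $30$-dimensional completion space by this residual action, restricted to the locus where $\f$ is injective (equivalently $\Coker\f$ is an honest sheaf, not having a zero-dimensional subsheaf or dropping rank), is an open subset of $\P^{17}$. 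I would make this precise by a Luna-slice / descent argument: étale-locally on $\N(3,5,4)$ choose a section of representatives $\f_{11}$, trivializing the ambient $\GL$-bundle, so that $W_0$ becomes, étale-locally, a product of the base with the completion space modulo the residual group, and the latter is cut out inside a projective space by the open injectivity condition.

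To package this cleanly I would introduce the $\GL_4\times\GL_5$-equivariant structure and form the associated fibre bundle $\E\to\N(3,5,4)$ with fibre $\P(\Hom(5\O(-2),\O)/(\text{image of }\f_{11}\text{-twisted directions}))\isom\P^{17}$: concretely, over the stable locus, the composite $5\O(-2)\xrightarrow{\f_{11}}4\O(-1)$ has cokernel a fixed sheaf, and two completions $\f_{21},\f_{21}'$ give isomorphic $\Coker\f$ precisely when they differ by an element of $\Hom(4\O(-1),\O)\cdot\f_{11}=\Hom(\C^4,\C)\tensor V^*$ acting by precomposition, a subspace of dimension $4\cdot 3=12$, so the quotient $30-12=18$-dimensional space projectivizes to $\P^{17}$. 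Then $W_0/G_0$ is identified with the open subset of $\E$ where $\f$ is injective with torsion-free cokernel of the right Hilbert polynomial, and the isomorphism with $X_0$ follows from Proposition~\ref{2.1}: the map $\f\mapsto\Coker(\f)$ is a well-defined bijective morphism on closed points, and since $\M(6,1)$ is smooth (hence normal) and the map is a geometric quotient by \cite{drezet-maican}-style arguments, it is an isomorphism onto the open dense $X_0$.

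The main obstacle I anticipate is the bookkeeping around the residual group action and the precise identification of the quotient as an \emph{open} subset of $\P^{17}$ rather than all of it: one must carefully characterize which completions $\f_{21}$ yield a non-injective $\f$ or a cokernel with a $0$-dimensional torsion subsheaf (these are exactly the loci removed to land in the "proper open subset" claimed), and verify that this bad locus is closed and proper in each fibre. Establishing that the étale-local product description glues to a genuine fibre bundle — i.e. that the cocycle obstruction vanishes, which here follows because $\GL_4\times\GL_5$-bundles over the Kronecker moduli space can be reduced appropriately, or alternatively by invoking that $\N(3,5,4)$ is a fine-enough moduli space in the stable range — is the other delicate point; I would handle it exactly as the analogous fibrations were handled in \cite{mult_five} and \cite{drezet-maican} for multiplicities four and five.
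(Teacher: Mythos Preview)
Your approach matches the paper's: both fibre $W_0/G_0$ over $\N(3,5,4)$ via $\f\mapsto\f_{11}$ and identify the fibre as an open subset of $\P^{17}$ by quotienting the $30$-dimensional completion space $\Hom(5\O(-2),\O)$ by the $12$-dimensional unipotent direction $\Hom(4\O(-1),\O)\cdot\f_{11}$ and a residual scalar. The only substantive difference is in the global packaging. Where you invoke Luna slices and descent, the paper constructs the rank-$18$ bundle directly as $\UU={p_1}_*\Coker(\theta^*)$ from the universal Kronecker morphism $\theta$ on $\N(3,5,4)\times\P^2$ (which exists since $\gcd(5,4)=1$, so every semi-stable module is stable and the moduli space is fine); on each fibre $H^0(\Coker(\f_{11}^{\T}))$ is literally the quotient you wrote down. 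The paper also identifies $\P(\UU)$ with the GIT quotient $\W_0^{ss}(\L)/G_0$ for a polarisation with $0<\m_2<1/5$, so that $W_0/G_0$ sits inside as the open locus of injective morphisms; this replaces your ad hoc removal of the ``bad locus''. One arithmetic slip: $\dim\GL_5=25$, not $24$, so $\dim\N(3,5,4)=60-40=20$ (and $20+17=37$ as expected).
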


\begin{proof}
The argument is identical to the argument at 3.2.1 \cite{mult_five}.
Let $\L = (\l_1,\m_1,\m_2)$ be a polarisation for the action of $G_0$ on $\W_0$
satisfying $0 < \m_2 < 1/5$; $W_0$ is the proper open invariant subset of injective
morphisms inside $\W_0^{ss}(\L)$. Let $\N(3,5,4)$ be the moduli space of semi-stable
Kronecker modules $f \colon 5\O(-2) \to 4\O(-1)$ and let
\[
\theta \colon p_1^*(E) \tensor p_2^*(\O(-2)) \lra p_1^*(F) \tensor p_2^*(\O(-1))
\]
be the morphism of sheaves on $\N(3,5,4) \times \P^2$ induced from the universal morphism
(notations as at 3.1.2 \cite{drezet-maican}).
Then $\UU={p_1}_{*}(\Coker(\theta^*))$ is a vector bundle of rank $18$ on $\N(3,5,4)$
and $\P(\UU)$ is the geometric quotient $\W_0^{ss}(\L)/G_0$.
Thus $W_0/G_0$ exists and is a proper open subset of $\P(\UU)$.

The canonical morphism $W_0/G_0 \to X_0$ is bijective and,
since $X_0$ is smooth, it is an isomorphism.
\end{proof}

\noi
Let $Y_0 \subset X_0$ be the open subset of points given by sheaves $\F = \Coker(\f)$
for which the maximal minors of $\f_{11}$ have no common factor.
Let $Y_0^\D \subset \M(6,5)$ be the dual subset.

\begin{prop}
\label{2.3}
The sheaves $\G$ from $Y_0^\D$ have the form $\J_Z(4)$,
where $Z \subset \P^2$ is a zero-dimensional scheme of length $10$ not contained
in a cubic curve, contained in a sextic curve $C$, and $\J_Z \subset \O_C$ is its ideal sheaf.

The generic sheaves $\G$ in $X_0^\D$ have the form $\O_C(4)(-P_1- \cdots - P_{10})$,
where $C \subset \P^2$ is a smooth sextic curve and $P_i$, $1 \le i \le 10$, are distinct points on $C$
not contained in a cubic curve. By duality, the generic sheaves $\F$ in $X_0$ have the form
$\O_C(P_1 + \cdots + P_{10})$.
\end{prop}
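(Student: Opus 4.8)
The plan is to unravel the dual resolution and identify $\G$ geometrically. By Proposition 2.1, a sheaf $\F$ giving a point in $X_0$ has a resolution $0 \to 5\O(-2) \stackrel{\f}{\to} 4\O(-1) \oplus \O \to \F \to 0$; on the open set $Y_0 \subset X_0$ we assume the maximal minors of $\f_{11} \colon 5\O(-2) \to 4\O(-1)$ have no common factor. Applying $\Hom(-,\omega_{\P^2})$ and twisting by $\O(1)$ (as recorded in the introduction, via \cite{maican-duality}), the sheaf $\G = \F^\D(1)$ has a resolution of the transposed shape $0 \to \O(-2) \oplus 4\O(-1) \stackrel{\f^\T}{\to} 5\O \to \G \to 0$ — here the summand $\O$ dualizes to $\O(-2)$ after the twist and the four $\O(-1)$ stay $\O(-1)$, while the source $5\O(-2)$ becomes $5\O$. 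First I would read off from this resolution that $\chi(\G(m)) = 6m+5$ and $\h^0(\G) = 5$, $\h^1(\G) = 0$, so $\G$ has five sections; the aim is to show $\G \isom \J_Z(4)$ for an appropriate length-$10$ subscheme $Z$ of a sextic $C$.

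Next I would produce the sextic curve and the ideal-sheaf description. The Fitting ideal of $\G$ (equivalently the determinant of a square presentation after absorbing the $\O$--part) cuts out the support sextic $C$; concretely, the entry configuration of $\f$ forces $\det$ of the relevant $5\times 5$ block built from $\f_{11}$ and the last row to be a sextic form $P$, and the hypothesis that the maximal minors of $\f_{11}$ have no common factor guarantees $P \neq 0$, i.e. $C = \{P = 0\}$ is a genuine sextic and $\G$ is a sheaf of rank $1$ on $C$. Then I would twist down: $\G(-4) = \F^\D(-3)$ is a subsheaf of $\O_C$ (it is torsion-free of rank $1$ on the integral — or at least pure — support, with $\chi(\G(-4)) = \chi(\O_C) - 10$), so $\G(-4) = \J_Z$ for a zero-dimensional $Z \subset C$ of length $10$, giving $\G = \J_Z(4)$. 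The condition that $Z$ is \emph{not} contained in a cubic is exactly the translation of the defining cohomological condition $\h^0(\F \tensor \Om^1(1)) = 0$ for $X_0$: via the Euler sequence $0 \to \Om^1(1) \to 3\O \to \O(1) \to 0$ on $\P^2$ tensored with $\G(-4)=\J_Z$, one gets $\h^0(\J_Z \tensor \Om^1(1)) = \h^0(\G \tensor \Om^1(1))$ after the duality bookkeeping, and $\h^0(\J_Z(1)\oplus\cdots) $ computations show this vanishes precisely when the evaluation map $\H^0(\O(3)) \to \H^0(\O_Z(3))$ is injective, i.e. no cubic through $Z$. This establishes the first paragraph of the statement.

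For the second paragraph — the generic sheaf in $X_0^\D$ — I would specialize to $C$ smooth and $Z = P_1 + \cdots + P_{10}$ a reduced set of $10$ distinct points on $C$ in general position, so that no cubic passes through them. On a smooth curve the ideal sheaf $\J_Z \subset \O_C$ of a reduced divisor is the line bundle $\O_C(-P_1 - \cdots - P_{10})$, hence $\G = \J_Z(4) = \O_C(4)(-P_1 - \cdots - P_{10})$, and dually $\F = \O_C(P_1 + \cdots + P_{10})$ after the standard identification $\F^\D = \omega_C \tensor \F^{-1}$ for a line bundle on a smooth plane sextic (where $\omega_C = \O_C(3)$); twisting this by $\O(1)$ returns $\O_C(4) \tensor \F^{-1}$, matching the formula for $\G$. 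It remains to check that such $(C, P_1, \ldots, P_{10})$ actually sweep out a dense subset of $X_0^\D$: I would do a dimension count — the choice of $C$ contributes $\dim |\O_{\P^2}(6)| = 27$, the $10$ points on $C$ contribute $10$, totalling $37 = \dim \M(6,1)$ — and combine it with the fact that $X_0^\D$ is irreducible, so a $37$-dimensional constructible family inside it is dense. The main obstacle I anticipate is the last point: verifying that the locus of genuine line bundles $\O_C(4)(-\sum P_i)$ with $C$ smooth is not contained in a proper closed subset, which amounts to checking that the natural map from the parameter space $\{(C, P_1, \ldots, P_{10})\}$ to $X_0^\D$ is dominant (generically finite, in fact) rather than dropping dimension; this should follow from computing the differential of this map at a general point, or alternatively by exhibiting one such $\F$ explicitly and invoking openness and irreducibility, but it is where the real content lies rather than in the formal duality manipulations.
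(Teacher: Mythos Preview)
Your overall strategy is sound and close in spirit to the paper's, but there is a genuine gap at the crucial step where you assert that $\G(-4)$ is a subsheaf of $\O_C$. You justify this by saying $\G(-4)$ is ``torsion-free of rank $1$ on the integral --- or at least pure --- support''; the parenthetical already flags the problem. Nothing in the hypotheses forces the sextic $C$ to be integral: the maximal minors of $\f_{11}$ having no common factor constrains the length-$10$ scheme $Z$, but the sextic $C$ comes from the extra column and can be reducible or non-reduced. On a non-integral curve a pure rank-one sheaf need not embed in $\O_C$, so the implication ``pure rank one with $\chi = \chi(\O_C)-10$, therefore an ideal sheaf'' fails in general.

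The paper sidesteps this entirely by working on $\P^2$ rather than on $C$. It first takes the cokernel of $\psi_{12}\colon 4\O(-1)\to 5\O$ alone and invokes a Hilbert--Burch type result (\cite{modules-alternatives}, propositions 4.5 and 4.6): since the maximal minors have no common factor, $\Coker(\psi_{12})\isom \I_Z(4)$ with $\I_Z\subset\O_{\P^2}$ the ideal of a length-$10$ scheme $Z$ not lying on a cubic. Only then does the remaining column $\psi_{11}\colon \O(-2)\to 5\O$ enter: it factors through $\I_Z(4)$ and corresponds to a sextic $C$ through $Z$, so $\G=\Coker(\O(-2)\to\I_Z(4))=\J_Z(4)$ by construction, with no appeal to properties of $C$. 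Incidentally, the ``not on a cubic'' condition then drops out immediately from the shape of the resolution of $\I_Z$ (twist by $\O(-1)$ and take $\H^0$), so your detour through the Euler sequence and the $X_0$ cohomological condition is unnecessary. Your treatment of the generic case (smooth $C$, distinct points, dimension count) is fine and more explicit than the paper, which simply asserts it.
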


\begin{proof}
The sheaves $\G$ from $Y_0^\D$ are precisely the sheaves having resolution
\[
0 \lra \O(-2) \oplus 4\O(-1) \stackrel{\psi}{\lra} 5\O \lra \G \lra 0,
\]
where the maximal minors of $\psi_{12}$ have no common factor.
In particular, $\psi_{12}$ is semi-stable as a Kronecker $V$-module.
According to \cite{modules-alternatives}, propositions 4.5 and 4.6, $\Coker(\psi_{12}) \isom \I_Z(4)$,
where $Z \subset \P^2$ is a zero-dimensional scheme of length $10$ not contained
in a cubic curve. Thus $\G \isom \J_Z(4)$, where $C$ is given by the equation $\det(\psi)=0$.
The converse is clear.
\end{proof}

%%%%%%%%%%%%%%%%%%%%%%%%%%%%%%%%%%%% section 3

\section{The codimension $2$ stratum}

\begin{prop}
\label{3.1}
Let $\F$ be a sheaf giving a point in $\M(6,1)$ and satisfying the conditions
$\h^1(\F) = 1$ and $\h^0(\F(-1))=0$. Then $\h^0(\F \tensor \Om^1(1))= 0$ or $1$.
The sheaves in the first case are precisely the sheaves with resolution of the form
\[
0 \lra \O(-3) \oplus 2\O(-2) \stackrel{\f}{\lra} \O(-1) \oplus 2\O \lra \F \lra 0,
\]
\[
\f= \left[
\ba{ccc}
q & \ell_1 & \ell_2 \\
f_1 & q_{11} & q_{12} \\
f_2 & q_{21} & q_{22}
\ea
\right],
\]
where $\f$ is not equivalent to a morphism represented by a matrix of one of the
following four forms:
\[
\f_1 = \left[
\ba{ccc}
\star & 0 & 0 \\
\star & \star & \star \\
\star & \star & \star
\ea
\right], \quad \f_2= \left[
\ba{ccc}
\star & \star & 0 \\
\star & \star & 0 \\
\star & \star & \star
\ea
\right], \quad \f_3 = \left[
\ba{ccc}
\star & \star & \star \\
\star & \star & \star \\
\star & 0 & 0
\ea
\right], \quad \f_4 = \left[
\ba{ccc}
0 & 0 & \star \\
\star & \star & \star \\
\star & \star & \star
\ea
\right].
\]
\end{prop}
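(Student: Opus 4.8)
The plan is to follow the Beilinson spectral sequence / resolution technique used throughout \cite{drezet-maican} and in the previous propositions. First I would record the numerical data: for a sheaf $\F$ giving a point of $\M(6,1)$ we have $\chi(\F(-1)) = -5$, so $\h^0(\F(-1)) = 0$ forces $\h^1(\F(-1)) = 5$; and $\chi(\F) = 1$ with $\h^1(\F) = 1$ gives $\h^0(\F) = 2$. The hypothesis $\h^0(\F\tensor\Om^1(1))$ can be computed from the Euler sequence $0 \to \Om^1(1) \to 3\O \to \O(1) \to 0$ twisted by $\F$, giving a bound $\h^0(\F\tensor\Om^1(1)) \le 1$ once one knows $\h^0(\F(-1)) = 0$, $\h^1(\F) = 1$ and (from semi-stability) $\h^0(\F(-1)) = 0$ implies $\F$ is $1$-regular-ish so $\h^1(\F(1)) = 0$. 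This dichotomy $\h^0(\F\tensor\Om^1(1)) \in \{0,1\}$ is the first assertion.

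Next, in the case $\h^0(\F\tensor\Om^1(1)) = 0$, I would feed the three vanishing/dimension conditions into the Beilinson monad with respect to the exceptional collection $\O(-1), \Om^1(1), \O$ (equivalently $\O(-2), \O(-1), \O$ after a twist): the display terms $\H^i(\F(j))$ and $\H^i(\F\tensor\Om^1(k))$ that survive assemble into exactly the resolution
\[
0 \lra \O(-3) \oplus 2\O(-2) \stackrel{\f}{\lra} \O(-1) \oplus 2\O \lra \F \lra 0.
\]
Concretely: $\h^1(\F(-2))$ (which one computes to be $1$ from $\chi(\F(-2)) = -11$ and the vanishing $\h^0(\F(-2)) = 0$, forced by $\h^0(\F(-1)) = 0$ and stability) contributes the $\O(-3)$ summand; $\h^1(\F(-1)) = 5$ together with $\h^0(\F\tensor\Om^1(1)) = 0$ and $\h^1(\F\tensor\Om^1(1))$ (computed from $\chi$) contribute the $2\O(-2)$ and $\O(-1)$ summands; $\h^1(\F) = 1$ and $\h^0(\F) = 2$ contribute the remaining terms; and all $\H^2$ vanish since $\F$ has $1$-dimensional support with no $0$-dimensional torsion in the relevant range. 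One must check the monad degenerates to a two-term complex (no $\Om^1$ appearing in homological degree $0$, no higher syzygies), which follows from the exact dimension count: the alternating sum of ranks is $6m+1$ as a polynomial, confirming $\Coker(\f) = \F$ with $\f$ injective.

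For the converse — that a general $\f$ of this shape has cokernel $\F$ with the three stated cohomological properties, and that the precise semi-stability locus is the complement of the four degenerate matrix forms $\f_1,\dots,\f_4$ — I would argue as in \cite{drezet-maican} and \cite{mult_five}: the four forms $\f_i$ are precisely the ways $\f$ can fail to be "stable" in the sense that $\Coker(\f)$ acquires a destabilising subsheaf or quotient, or fails to be sheaf-theoretically a valid point of $\M(6,1)$. Each $\f_i$ corresponds to $\f$ factoring through a proper sub-bundle of $\O(-1)\oplus 2\O$ or restricting to a proper quotient of the domain, producing either a subsheaf of slope $> 1/6$ (e.g. $\f_1$ and $\f_4$ give a quotient of the form $\Coker$ of a $2\times 2$ or smaller block, which is a sheaf of multiplicity $< 6$ and too-large Euler characteristic) or a destabilising quotient (the $\f_2, \f_3$ cases, where $\f$ has a $2$-dimensional block of zeros forcing $\F$ to surject onto a sheaf supported on a lower-degree curve with $\chi$ too large). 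Conversely one shows that if $\f$ avoids all four forms then $\Coker(\f)$ is semi-stable and $\h^1 = 1$, $\h^0(\F(-1)) = 0$, $\h^0(\F\tensor\Om^1(1)) = 0$ hold; the cohomology conditions are read off from the resolution by applying $\Hom(-,\O(k))$ and the Euler sequence. This classification of the "forbidden" loci is the main obstacle: it requires a careful case analysis of which sub-/quotient-bundle degenerations of $\f$ correspond to destabilising sub- or quotient sheaves of $\Coker(\f)$, and verifying there are no others — exactly the kind of bookkeeping carried out in the analogous propositions of \cite{drezet-maican} and \cite{mult_five}, whose arguments I would adapt essentially verbatim, so I would cite those for the combinatorial details rather than reproduce them.
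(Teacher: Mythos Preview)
Your overall strategy (Beilinson monad, then semi-stability analysis) is the paper's, but several steps as written are incorrect or genuinely incomplete.

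\textbf{The bound $m\le 1$.} This does not drop out of the Euler sequence the way you suggest. Tensoring $0\to\Om^1(1)\to 3\O\to\O(1)\to 0$ by $\F$ and taking cohomology, with $\h^0(\F)=2$, $\h^1(\F)=1$, $\h^0(\F(1))=7$, $\h^1(\F(1))=0$, gives only $\h^0(\F\tensor\Om^1(1))\le 6$; nothing forces the map $\H^0(3\F)\to\H^0(\F(1))$ to have large rank. The paper instead writes down the Beilinson monad, which produces
\[
0 \lra \O(-3)\oplus 2\O(-2)\oplus m\O(-1) \stackrel{\f}{\lra} (m+1)\O(-1)\oplus 2\O \lra \F \lra 0
\]
with $\f_{13}=0$. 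Injectivity of $\f$ then forces $m\le 2$, and $m=2$ is excluded because $\Coker(\f_{23})$ (a sheaf with Hilbert polynomial $2t+2$) would destabilise $\F$. So the bound uses both the shape of the resolution and semi-stability, not just numerics. (Also, ``$\h^0(\F(-1))=0$ implies $\h^1(\F(1))=0$'' is not a regularity triviality; in the paper this is effectively the content of Proposition~\ref{6.3}.)

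\textbf{Beilinson bookkeeping.} The terms in the $\P^2$ Beilinson spectral sequence are $\H^q(\F(-1))$, $\H^q(\F\tensor\Om^1(1))$, $\H^q(\F)$; there is no $\H^1(\F(-2))$ term, and in any case $\chi(\F(-2))=-11$, not $-1$. The $\O(-3)$ summand in the resolution does not come from a cohomology group of size $1$; it arises after one resolves the $\Om^1$ appearing in the monad and cancels a rank-$3$ block $3\O(-2)\to 3\O(-2)$ against $5\O(-2)$.

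\textbf{The converse.} The paper does not argue by ``sub-/quotient-bundle degenerations of $\f$'' in the abstract. It takes a putative semi-stable destabilising subsheaf $\E\subset\F$, observes $\h^0(\E)\le 2$, $\h^0(\E(-1))=0$, $\h^0(\E\tensor\Om^1(1))=0$, and then invokes the explicit classification of points in $\M(r,1)$ and $\M(r,2)$, $1\le r\le 5$, from \cite{drezet-maican} and \cite{mult_five} to list seven concrete resolutions $\E$ can have. Each is lifted to a commutative diagram against the resolution of $\F$, and injectivity of the comparison maps forces $\f\sim\f_i$ for some $i$. Your sketch skips both the enumeration and the lifting argument; citing the earlier papers ``for the combinatorial details'' is not enough, because the specific list of seven cases and the way each one collapses to a $\f_i$ is particular to this proposition.
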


\begin{proof}
Assume that $\F$ gives a point in $\M(6,1)$ and satisfies the conditions
$\h^1(\F)=1$, $\h^0(\F(-1))=0$. Write $m= \h^0(\F \tensor \Om^1(1))$.
The Beilinson free monad for $\F$
\[
0 \lra 5\O(-2) \oplus m\O(-1) \lra (m+4)\O(-1) \oplus 2\O \lra \O \lra 0
\]
gives the resolution
\[
0 \lra 5\O(-2) \oplus m\O(-1) \lra \Om^1 \oplus (m+1) \O(-1) \oplus 2\O \lra \F \lra 0,
\]
hence the exact sequence
\[
0 \lra \O(-3) \oplus 5\O(-2) \oplus m\O(-1) \stackrel{\f}{\lra} 3\O(-2) \oplus (m+1)\O(-1) \oplus 2\O
\lra \F \lra 0
\]
with $\f_{13}=0$, $\f_{23}=0$. As in the proof of 2.1.4 \cite{mult_five}, we have
$\rank(\f_{12})=3$, hence we have the resolution
\[
0 \lra \O(-3) \oplus 2\O(-2) \oplus m\O(-1) \stackrel{\f}{\lra} (m+1)\O(-1) \oplus 2\O
\lra \F \lra 0
\]
with $\f_{13}=0$. From the injectivity of $\f$ we see that $m \le 2$. If $m=2$, then
$\Coker(\f_{23})$ would be a destabilising subsheaf of $\F$.
We conclude that $m=0$ or $1$.
In the first case $\F$ has resolution as in the proposition.
The conditions on $\f$ follow from the semi-stability of $\F$.

\medskip

\noi
Conversely, we assume that $\F$ has resolution as in the proposition and we need to show that there are no
destabilising subsheaves $\E$. We argue by contradiction, i.e. we assume that there is such
a subsheaf $\E$. We may take $\E$ to be semi-stable.
As $\h^0(\E) \le 2$, $\E$ gives a point in $\M(r,1)$ or $\M(r,2)$ for some $r$, $1 \le r \le 5$.
The cohomology groups $\H^0(\E(-1))$ and $\H^0(\E \tensor \Om^1(1))$ vanish
because the corresponding cohomology groups for $\F$ vanish.
From the description of $\M(r,1)$ and $\M(r,2)$, $1 \le r \le 5$, found in \cite{drezet-maican}
and \cite{mult_five}, we see that $\E$ may have one of the following resolutions:
\[
\tag{1}
0 \lra \O(-2) \lra \O  \lra \E \lra 0,
\]
\[
\tag{2}
0 \lra 2\O(-2) \lra \O(-1) \oplus \O \lra \E \lra 0,
\]
\[
\tag{3}
0 \lra 3\O(-2) \lra 2\O(-1) \oplus \O \lra \E \lra 0,
\]
\[
\tag{4}
0 \lra 2\O(-2) \lra 2\O \lra \E \lra 0,
\]
\[
\tag{5}
0 \lra 4\O(-2) \lra 3\O(-1) \oplus \O \lra \E \lra 0,
\]
\[
\tag{6}
0 \lra \O(-3) \oplus \O(-2) \lra 2\O \lra \E \lra 0,
\]
\[
\tag{7}
0 \lra 3\O(-2) \lra \O(-1) \oplus 2\O \lra \E \lra 0.
\]
Resolution (1) must fit into a commutative diagram
\[
\tag{*}
\xymatrix
{
0 \ar[r] & \O(-2) \ar[r]^-{\psi} \ar[d]^-{\b} & \O \ar[r] \ar[d]^-{\a} & \E \ar[r] \ar[d] & 0 \\
0 \ar[r] & \O(-3) \oplus 2\O(-2) \ar[r]^-{\f} & \O(-1) \oplus 2\O \ar[r] & \F \ar[r] & 0
}
\]
in which $\a$ is injective (being injective on global sections).
Thus $\b$ is injective, too, and $\f \sim \f_2$, contradicting our hypothesis on $\f$.
Similarly, every other resolution must fit into a commutative diagram in which
$\a$ and $\a(1)$ are injective on global sections.
This rules out resolution (7) because in that case $\a$ must be injective,
hence $\Ker(\b)=0$, which is absurd.
If $\E$ has resolution (5), then $\a$ is equivalent to a morphism
represented by a matrix having one of the following two forms:
\[
\left[
\ba{cccc}
1 & 0 & 0 & 0 \\
0 & u_1 & u_2 & 0 \\
0 & 0 & 0 & 1
\ea
\right] \quad \text{or} \quad \left[
\ba{cccc}
0 & 0 & 0 & 0 \\
u_1 & u_2 & u_3 & 0 \\
0 & 0 & 0 & 1
\ea
\right],
\]
where $u_1, u_2, u_3$ are linearly independent one-forms.
In the first case $\Ker(\b) \isom \O(-2)$, in the second case $\Ker(\b) \isom \Om^1$.
Both situations are absurd.
Assume that $\E$ has resolution (3).
Since $\b$ cannot be injective, we see that $\a$ is equivalent to a morphism
represented by a matrix of the form
\[
\left[
\ba{ccc}
0 & 0 & 0 \\
u_1 & u_2 & 0 \\
0 & 0 & 1
\ea
\right],
\]
hence $\Ker(\a) \isom \O(-2)$, hence $\f \sim \f_1$, which is a contradiction.
For resolutions (2), (4) and (6) $\a$ and $\b$ must be injective and we get
the contradictions $\f \sim \f_3$, $\f \sim \f_1$, or $\f \sim \f_4$.
\end{proof}

\noi
Let $\W_1 = \Hom(\O(-3) \oplus 2\O(-2), \O(-1) \oplus 2\O)$ and let $W_1 \subset \W_1$
be the set of morphisms $\f$ from proposition \ref{3.1}. Let
\[
G_1 = (\Aut(\O(-3) \oplus 2\O(-2)) \times \Aut(\O(-1) \oplus 2\O))/\C^*
\]
be the natural group acting by conjugation on $\W_1$.
Let $X_1 \subset \M(6,1)$ be the set of isomorphism classes of sheaves of the form $\Coker(\f)$,
$\f \in W_1$. Note that $X_1$ is a locally closed subset; we equip it with the canonical induced
reduced structure.

\begin{prop}
\label{3.2}
There exists a geometric quotient $W_1/G_1$ and it is isomorphic to $X_1$.
\end{prop}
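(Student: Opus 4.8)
The plan is to realise $W_1$ as the open $G_1$-invariant subset of injective morphisms inside the stable locus $\W_1^s(\L)$ for a suitably chosen polarisation $\L$, to obtain the geometric quotient from the variant of geometric invariant theory for groups of the form $\Aut(\A) \times \Aut(\B)$ with $\A,\B$ direct sums of line bundles that is used throughout \cite{drezet-maican} and \cite{mult_five}, and then to identify this quotient with $X_1$. Writing $\A_1 = \O(-3) \oplus 2\O(-2)$ and $\B_1 = \O(-1) \oplus 2\O$, I would work with a polarisation $\L = (\l_1, \l_2, \m_1, \m_2)$, where $\l_1$ is the weight attached to $\O(-3)$, $\l_2$ the common weight of the two copies of $\O(-2)$, $\m_1$ the weight of $\O(-1)$ and $\m_2$ the common weight of the two copies of $\O$, subject to the normalisation $\l_1 + 2\l_2 = \m_1 + 2\m_2$; as in the proof of \ref{2.2} the choice would be taken generic with $\m_2$ small, the precise inequalities being forced by the next step.

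The central step is to arrange that $\L$-stability of a morphism matches semistability of its cokernel. By \ref{3.1}, for injective $\f$ the sheaf $\Coker(\f)$ is semistable precisely when $\f$ is not equivalent to any of $\f_1, \dots, \f_4$. On the other side, the Hilbert--Mumford criterion reduces $\L$-instability of $\f \in \W_1$ to the existence, after conjugation by $G_1$, of a block of zeros in $\f$ of one of finitely many shapes determined by the decompositions of $\A_1$ and $\B_1$ into line bundles (the rows of the $3 \times 3$ matrix carrying the ``degrees'' $-1, 0, 0$ and the columns the degrees $-3, -2, -2$). I would check that $\L$ can be chosen so that the $\L$-destabilising patterns are exactly those of $\f_1, \dots, \f_4$ together with the patterns that force $\f$ to be non-injective (a vanishing column, or a $2 \times 3$ or $3 \times 2$ zero block). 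With such an $\L$ one obtains $\W_1^{ss}(\L) = \W_1^s(\L)$ and
\[
W_1 = \{\, \f \in \W_1^s(\L) : \f \text{ is injective} \,\},
\]
an open $G_1$-invariant subset of $\W_1^s(\L)$. The cited theory then provides the geometric quotient $\W_1^s(\L)/G_1$, hence its restriction, the geometric quotient $W_1/G_1$; moreover $W_1/G_1$ is a smooth quasi-projective variety, since $\Hom(\B_1,\A_1)=0$ forces the stabiliser of each stable point in $G_1$ to be trivial and the action on $W_1$ is thus free.

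Finally I would identify $W_1/G_1$ with $X_1$. By \ref{3.1} the assignment $\f \mapsto \Coker(\f)$ maps $W_1$ onto $X_1$, and two morphisms of $W_1$ have isomorphic cokernels if and only if they lie in one $G_1$-orbit, the resolution displayed in \ref{3.1} being the Beilinson-type resolution canonically attached to the cohomology of $\Coker(\f)$ and hence unique up to automorphisms of $\A_1$ and $\B_1$. This yields a bijective morphism $h \colon W_1/G_1 \to X_1$. To see that $h$ is an isomorphism I would argue in the spirit of the end of the proof of \ref{2.2}: the composite $W_1/G_1 \to \M(6,1)$ is injective on points and on tangent spaces — the latter by the standard fact (cf. the analogous arguments in \cite{drezet-maican}) that the normal space to the $G_1$-orbit of $\f$ embeds canonically in $\Ext^1(\Coker\f, \Coker\f)$ — hence an immersion, so its image, which is $X_1$ with the reduced structure, is smooth and $h$ is an isomorphism. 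Alternatively: since $\gcd(6,1)=1$, $\M(6,1)$ is a fine moduli space, and applying a relative Beilinson monad to the restriction of a universal sheaf to $X_1 \times \P^2$ — the cohomological conditions defining $X_1$ being locally constant there — produces a length-one resolution by twists of $\A_1$ and $\B_1$, whence a morphism $X_1 \to W_1/G_1$ inverse to $h$. I expect the main obstacle to be the Hilbert--Mumford bookkeeping of the second step, namely pinning down $\L$ so that $\L$-stability of $\f$ is precisely the four-fold non-equivalence condition of \ref{3.1} and the non-injective locus is entirely $\L$-unstable; the triviality of the stabilisers, on which the smoothness of the quotient and the cleanness of the identification with $X_1$ rest, is comparatively routine.
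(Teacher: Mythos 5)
Your main route has a genuine gap at the point where you assert that ``the cited theory then provides the geometric quotient $\W_1^s(\L)/G_1$.'' The group $G_1=(\Aut(\O(-3)\oplus 2\O(-2))\times\Aut(\O(-1)\oplus 2\O))/\C^*$ is not reductive: both factors have large unipotent radicals, coming from $\Hom(\O(-3),\O(-2))$ and $\Hom(\O(-1),\O)$, so King's criterion and ordinary GIT do not apply. The relevant machinery is the Dr\'ezet--Trautmann theory of quotients by such non-reductive groups, and there the existence of a good or geometric quotient of the (semi)stable locus is \emph{not} automatic: it requires additional numerical conditions on the polarisation (the analogues of conditions 6.1.1, 7.2.2 and 8.1.3 of \cite{drezet-trautmann}, which the present paper does verify in the one place it uses this machinery, proposition \ref{4.2}, and even there only for a smaller matrix space with homogeneous target). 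In the present $3\times 3$ situation the stability bookkeeping alone already pins $\L$ into a narrow region (one needs, with the usual normalisations, roughly $\m_2<\l_2<\min(\m_1,2\m_2)$ and $2\l_2>\m_1+\m_2$ to make $\L$-stability match non-equivalence to $\f_1,\dots,\f_4$), and you give no argument that this region meets the Dr\'ezet--Trautmann existence conditions, which now receive contributions from unipotent parts on \emph{both} sides. So the step you describe as the main obstacle (the Hilbert--Mumford bookkeeping) is doable, but the step you treat as automatic (existence of the quotient for a non-reductive group) is exactly where the argument is unsupported and may fail.

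The paper avoids GIT on $\W_1$ entirely. It shows that resolution \ref{3.1} is obtained \emph{naturally} from the Beilinson spectral sequence of $\F$ (working through the dual sheaf $\G=\F^\D(1)$), so the construction works in flat families and the canonical map $\r\colon W_1\to X_1$ is a categorical quotient map in the sense of 3.1.6 \cite{drezet-maican}; it then upgrades ``categorical'' to ``geometric'' by noting that $X_1$ is normal (\cite{mumford}, remark (2), p.~5) and applying \cite{popov-vinberg}, theorem 4.2. Your ``alternative'' identification step --- the relative Beilinson monad over a universal family on $X_1$ --- is essentially this argument, but as written it presupposes that $W_1/G_1$ already exists in order to have something to map to. If you reorganise it so that the naturality of the resolution proves directly that $\r$ is a categorical quotient, you still need a device such as normality plus Popov--Vinberg (or an equivalent) to conclude that this categorical quotient is geometric; that final step is missing from your proposal.
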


\begin{proof}
We have a canonical morphism $\r \colon W_1 \to X_1$ mapping $\f$ to the isomorphism
class of $\Coker(\f)$. Its fibres are $G_1$-orbits. In order to show that $\r$ is a categorical
quotient map we use the method of 3.1.6 \cite{drezet-maican}. We need to show that for
given $\F$ in $X_1$ resolution \ref{3.1} can be obtained in a natural manner from the Beilinson
spectral sequence converging to $\F$.
We prefer, instead, to start with the Beilinson spectral sequence of the dual sheaf
$\G = \F^\D(1)$, which gives a point in $\M(6,5)$.
Diagram (2.2.3) \cite{drezet-maican} for $\G$ takes the form
\[
\xymatrix
{
2\O(-2) & 0 & 0 \\
\O(-2) \ar[r]^-{\f_3} & 4\O(-1) \ar[r]^-{\f_4} & 5\O
}.
\]
The exact sequence (2.2.5) \cite{drezet-maican} reads:
\[
0 \lra 2\O(-2) \stackrel{\f_5}{\lra} \Coker(\f_4) \lra \G \lra 0.
\]
We see from this that $\Coker(\f_4)$ has no zero-dimensional torsion.
The exact sequence (2.2.4) \cite{drezet-maican} takes the form
\[
0 \lra \O(-2) \stackrel{\f_3}{\lra} 4\O(-1) \stackrel{\f_4}{\lra} 5\O \lra \Coker(\f_4) \lra 0.
\]
As in the proof of 3.2.4 \cite{mult_five}, we can show that $\Coker(\f_3)$ is isomorphic to
$\O(-1) \oplus \Om^1(1)$. The argument, we recall, uses the fact that $\Coker(\f_4)$
has no zero-dimensional torsion. Thus we have an exact sequence
\[
0 \lra \O(-1) \oplus \Om^1(1) \lra 5\O \lra \Coker(\f_4) \lra 0.
\]
Combining with the above resolution of $\G$ yields the resolution
\[
0 \lra 2\O(-2) \oplus \O(-1) \oplus \Om^1(1) \lra 5\O \lra \G \lra 0.
\]
Dualising we obtain the exact sequence
\[
0 \lra 5\O(-2) \lra \Om^1 \oplus \O(-1) \oplus 2\O \lra \F \lra 0.
\]
We saw in the proof of proposition \ref{3.1} how this leads to the desired resolution of $\F$.

Thus far we have proved that $\rho$ is a categorical quotient map.
According to \cite{mumford}, remark (2), p. 5, $X_1$ is normal.
Applying \cite{popov-vinberg}, theorem 4.2, we conclude that $\rho$ is a geometric quotient
map.\footnote{The author is grateful to J.-M. Dr\'ezet for pointing out this reference.}
\end{proof}

%%%%%%%%%%%%%%%%%%%%%%%%%%%%%%%%%%%%%% section 4

\section{The codimension $4$ stratum}

\begin{prop}
\label{4.1}
The sheaves $\F$ giving points in $\M(6,1)$ and satisfying the cohomological conditions
\[
\h^1(\F) = 1, \qquad \h^0(\F(-1))=0, \qquad \h^0(\F \tensor \Om^1(1))= 1
\]
are precisely the sheaves with resolution of the form
\[
0 \lra \O(-3) \oplus 2\O(-2) \oplus \O(-1) \stackrel{\f}{\lra} 2\O(-1) \oplus 2\O \lra \F \lra 0,
\]
\[
\f = \left[
\ba{cccc}
q_1 & \ell_{11} & \ell_{12} & 0 \\
q_2 & \ell_{21} & \ell_{22} & 0 \\
f_1 & q_{11} & q_{12} & \ell_1 \\
f_2 & q_{21} & q_{22} & \ell_2
\ea
\right],
\]
where $\ell_1, \ell_2$ are linearly independent one-forms,
$\ell_{11} \ell_{22} - \ell_{12} \ell_{21} \neq 0$ and the images of

\noi
\hfill ${\ds \left|
\ba{cc}
q_1 & \ell_{11} \\
q_2 & \ell_{21}
\ea
\right|}$
and 
${\ds \left|
\ba{cc}
q_1 & \ell_{12} \\
q_2 & \ell_{22}
\ea
\right|}$
in $\SS^3 V^*/(\ell_{11} \ell_{22} - \ell_{12} \ell_{21})V^*$ are linearly independent.

Notice that the last two conditions are equivalent
to saying that $\f$ is not equivalent to a morphism represented by a matrix of one of the
following four forms:
\[
\f_1 = \left[
\ba{cccc}
\star & 0 & 0 & 0 \\
\star & \star & \star & 0 \\
\star & \star & \star & \star \\
\star & \star & \star & \star
\ea
\right]\!\! , \ \f_2 = \left[
\ba{cccc}
\star & \star & 0 & 0 \\
\star & \star & 0 & 0 \\
\star & \star & \star & \star \\
\star & \star & \star & \star
\ea
\right]\!\! , \ \f_3 = \left[
\ba{cccc}
0 & \star & 0 & 0 \\
\star & \star & \star & 0 \\
\star & \star & \star & \star \\
\star & \star & \star & \star
\ea
\right] \!\! , \ \f_4 = \left[
\ba{cccc}
0 & \star & \star & 0 \\
0 & \star & \star & 0 \\
\star & \star & \star & \star \\
\star & \star & \star & \star
\ea
\right].
\]
\end{prop}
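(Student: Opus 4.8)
The plan is to follow the same two-part structure used in Propositions~\ref{2.1} and \ref{3.1}: first produce the resolution from the cohomological conditions via Beilinson, then establish the equivalence of the minor conditions with the non-degeneracy conditions, and finally verify that the listed conditions are exactly those forced by (and forcing) semi-stability. The starting point is the Beilinson free monad for $\F$ under the hypotheses $\h^1(\F)=1$, $\h^0(\F(-1))=0$, $\h^0(\F\tensor\Om^1(1))=1$. By the same bookkeeping as in the proof of \ref{3.1} (with $m=1$), the monad collapses to an exact sequence
\[
0 \lra \O(-3) \oplus 2\O(-2) \oplus \O(-1) \stackrel{\f}{\lra} 2\O(-1) \oplus 2\O \lra \F \lra 0
\]
once one checks that $\f_{13}=0$ and that the relevant block of $\f$ mapping $\O(-1)$ into $2\O(-1)$ vanishes, which follows from the fact that a nonzero scalar block would split off a copy of $\O(-1)$ and contradict injectivity of $\f$ or the computation of $\h^0(\F(-1))$. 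Writing $\f$ in the displayed $4\times 4$ block form, the upper-right $2\times1$ block is $0$ for degree reasons, and the lower-right block is a $2\times1$ matrix of one-forms $(\ell_1,\ell_2)^\T$; injectivity of $\f$ forces $\ell_1,\ell_2$ to be linearly independent, and injectivity of the upper-left $2\times3$ block of one-forms $\bigl[\begin{smallmatrix}\ell_{11}&\ell_{12}\\\ell_{21}&\ell_{22}\end{smallmatrix}\bigr]$ (together with the $q_i$ column) forces $\ell_{11}\ell_{22}-\ell_{12}\ell_{21}\neq 0$.

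Next I would carry out the equivalence of the two formulations of the last condition. The assertion is that the images of the $2\times2$ minors $\left|\begin{smallmatrix}q_1&\ell_{11}\\q_2&\ell_{21}\end{smallmatrix}\right|$ and $\left|\begin{smallmatrix}q_1&\ell_{12}\\q_2&\ell_{22}\end{smallmatrix}\right|$ in $\SS^3V^*/(\ell_{11}\ell_{22}-\ell_{12}\ell_{21})V^*$ are linearly independent if and only if $\f$ is not equivalent to one of the four listed normal forms $\f_1,\dots,\f_4$. This is a linear-algebra argument on the group action: using the $\Aut$-action one first normalises the block $\bigl[\begin{smallmatrix}\ell_{11}&\ell_{12}\\\ell_{21}&\ell_{22}\end{smallmatrix}\bigr]$ (whose determinant is a fixed nonzero conic, call it $Q$), and then analyses how the column $(q_1,q_2)^\T$ can be reduced modulo $Q\cdot V^*$ and modulo row operations. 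A linear dependence of the two minors modulo $QV^*$ corresponds, after change of basis, to being able to create a zero in the first column or to collapsing the first column into the span of one of the $\ell$-columns — exactly the shapes $\f_1$ (a zero one-form in position $(1,1)$ together with a vanishing of the $(1,2),(1,3)$ one-forms), $\f_2$ (the first column depends on the second block), $\f_3$, $\f_4$. I would treat this essentially as in the analysis of morphism types in \cite{mult_five} and \cite{drezet-maican}, spelling out the four degenerate cases and checking each forces a vanishing minor relation, and conversely.

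Finally, for the semi-stability half, I would argue by contradiction exactly as in \ref{3.1}: suppose $\E\subset\F$ is a destabilising semi-stable subsheaf; then $\h^0(\E(-1))=0$ and $\h^0(\E\tensor\Om^1(1))\le 1$ by restriction from $\F$, so $\E$ has one of a short list of resolutions coming from the known descriptions of $\M(r,\chi)$ for small $r$ in \cite{drezet-maican} and \cite{mult_five}. For each candidate resolution of $\E$ one fits it into a commutative diagram mapping to the resolution of $\F$, deduces injectivity of the vertical maps from injectivity on global sections, and reads off that $\f$ must be equivalent to one of $\f_1,\dots,\f_4$ — contradicting the hypothesis. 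The converse direction (that the listed conditions are \emph{necessary} for semi-stability) comes from running the same diagrams in reverse: if $\f\sim\f_i$ for some $i$, the indicated block structure exhibits an explicit destabilising subsheaf (the cokernel of the lower-right sub-block, or a kernel such as $\O(-2)$ or $\Om^1$). I expect the main obstacle to be the middle step: the bijection between the degenerate matrix shapes $\f_1,\dots,\f_4$ and the failure of linear independence of the two cubic minors modulo the conic is the most delicate piece, since one must be careful about which $\Aut$-reductions are available and must handle the interaction between the $q_i$-column and the conic $Q=\ell_{11}\ell_{22}-\ell_{12}\ell_{21}$ (in particular the case where $Q$ is a square or a product of two distinct lines may need separate verification). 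The other two steps are close adaptations of arguments already carried out in the cited references and in \ref{3.1}.
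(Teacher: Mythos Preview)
Your overall strategy matches the paper's: derive the resolution from the Beilinson monad as in \ref{3.1} with $m=1$, then run the subsheaf-by-subsheaf analysis for the converse. One concrete error, though: you write that \emph{injectivity of $\f$} forces $\ell_1,\ell_2$ to be linearly independent and $\ell_{11}\ell_{22}-\ell_{12}\ell_{21}\neq 0$. This is false. For instance
\[
\f=\begin{pmatrix} X^2 & Y & 0 & 0\\ 0 & 0 & Y & 0\\ 0 & 0 & 0 & X\\ Z^3 & 0 & 0 & X\end{pmatrix}
\]
has $\det\f=-XY^2Z^3\neq 0$, so is injective, yet $\ell_1=\ell_2=X$. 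Likewise the $2\times 2$ block of $\ell_{ij}$'s can have vanishing determinant while $\f$ stays injective. These conditions are forced by \emph{semi-stability}, not injectivity: if $\ell_1,\ell_2$ are dependent then (after a row operation) the last row gives a surjection $\F\twoheadrightarrow\Coker(\O(-3)\oplus 2\O(-2)\to\O)$, and if the conic vanishes then $\f\sim\f_1$ or $\f_2$ and one reads off a destabilising quotient from the top block. You essentially say this correctly in your final paragraph (``running the same diagrams in reverse''), so the fix is just to move those conditions out of the Beilinson step and into the semi-stability step, as the paper does.

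A second, smaller point: the vanishing of the $\O(-1)\to 2\O(-1)$ block is not really an injectivity issue either; it comes straight out of the Beilinson construction (the $\f_{13}=0$ in the proof of \ref{3.1}), or equivalently from the hypothesis $m=1$ --- a nonzero scalar there would let you cancel $\O(-1)$ and land in the $m=0$ stratum. Finally, the paper's converse argument is longer than you may expect: there are thirteen candidate resolutions for $\E$ (not seven as in \ref{3.1}), because now $\h^0(\E\otimes\Om^1(1))$ is allowed to be $1$; several of them (notably the ones with a $3\O(-1)$ or $4\O(-2)$ term) require analysing the possible shapes of $\a$ up to equivalence and tracking $\Ker(\b)$, so budget more space for that case analysis than your sketch suggests.
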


\begin{proof}
At \ref{3.1} we proved that a sheaf $\F$ in $\M(6,1)$ satisfying the above cohomologial
conditions has a resolution as in the proposition. The conditions on $\f$ follow from the semi-stability
of $\F$.

Assume now that $\F$ has a resolution as in the proposition and that $\E$ is a destabilising subsheaf.
We have $\h^0(\E(-1))=0$, $\h^0(\E \tensor \Om^1(1)) \le 1$ and, as at \ref{3.1}, we may assume that
$\E$ gives a point in $\M(r,1)$ or $\M(r,2)$ for some $r$, $1 \le r \le 5$.
From the above we see that $\E$ may have one of the following resolutions:
\[
\tag{1}
0 \lra  \O(-1) \lra \O \lra  \E \lra 0,
\]
\[
\tag{2}
0 \lra \O(-2) \lra \O \lra \E \lra 0,
\]
\[
\tag{3}
0 \lra 2\O(-2) \lra \O(-1) \oplus \O \lra \E \lra 0,
\]
\[
\tag{4}
0 \lra \O(-2) \oplus \O(-1) \lra 2\O  \lra \E \lra 0,
\]
\[
\tag{5}
0 \lra 3\O(-2) \lra 2\O(-1) \oplus \O \lra \E \lra 0,
\]
\[
\tag{6}
0 \lra \O(-3) \oplus \O(-1) \lra 2\O \lra \E \lra 0,
\]
\[
\tag{7}
0 \lra 2\O(-2) \lra 2\O \lra \E \lra 0,
\]
\[
\tag{8}
0 \lra 2\O(-2) \oplus \O(-1) \lra \O(-1) \oplus 2\O \lra \E \lra 0,
\]
\[
\tag{9}
0 \lra 4\O(-2) \lra 3\O(-1) \oplus \O \lra \E \lra 0,
\]
\[
\tag{10}
0 \lra \O(-3) \oplus \O(-2) \lra 2\O \lra \E \lra 0,
\]
\[
\tag{11}
0 \lra \O(-3) \oplus \O(-2) \oplus \O(-1) \lra \O(-1) \oplus 2\O \lra \E \lra 0,
\]
\[
\tag{12}
0 \lra 3\O(-2) \lra \O(-1) \oplus 2\O \lra \E \lra 0,
\]
\[
\tag{13}
0 \lra 3\O(-2) \oplus \O(-1) \lra 2\O(-1) \oplus 2\O \lra \E \lra 0.
\]
Resolutions (1), (2), (4), (6), (7) and (10) must fit into commutative diagrams as diagram (*) at \ref{3.1}
in which $\a$ is injective, being injective on global sections.
Thus $\b$ is also injective and we obtain the contradictory conclusions that
$\ell_1, \ell_2$ are linearly dependent or that $\f$ is equivalent to $\f_1$, $\f_2$ or $\f_4$.
The remaining resolutions also fit into commutative diagrams (*) in which $\a$ is injective on global
sections and $\a(1)$ restricted to the first direct summand is also injective on global sections.
Thus $\a$ is injective in the case of resolutions (3), (8), (11), (12) and (13).
This rules out resolution (13) since in that case $\b$ cannot be injective.
Assume that $\E$ has resolution (3). Then
\[
\a \sim \left[
\ba{cc}
0 & 0 \\
0 & 0 \\
u & 0 \\
0 & 1
\ea
\right] \quad \text{or} \quad \a \sim \left[
\ba{cc}
0 & 0 \\
1 & 0 \\
0 & 1 \\
0 & 0
\ea
\right] \quad \text{and} \quad \b \sim \left[
\ba{cc}
0 & 0 \\
1 & 0 \\
0 & 1 \\
0 & 0
\ea
\right] \quad \text{or} \quad \b \sim \left[
\ba{cc}
0 & 0 \\
0 & 0 \\
1 & 0 \\
0 & v
\ea
\right]
\]
for some non-zero $u, v \in V^*$. We obtain the contradictory conclusions that
$\f \sim \f_1$, or $\f \sim \f_2$, or that $\ell_1$ and $\ell_2$ are linearly dependent.
For resolutions (8), (11) and (12) we have
\[
\a \sim \left[
\ba{ccc}
0 & 0 & 0 \\
1 & 0 & 0 \\
0 & 1 & 0 \\
0 & 0 & 1
\ea
\right] \quad \text{and} \quad \b \sim \left[
\ba{ccc}
0 & 0 & 0 \\
\star & 0 & 0 \\
0 & \star & 0 \\
0 & 0 & \star
\ea
\right] \quad \text{or} \quad \b \sim \left[
\ba{ccc}
\star & 0 & 0 \\
0 & 0 & 0 \\
0 & \star & 0 \\
0 & 0 & \star
\ea
\right],
\]
where ``$\star$'' stands for a non-zero entry.
We obtain the contradictory conclusions that $\f \sim \f_1$ or $\f \sim \f_3$.
Assume that $\E$ has resolution (5). Then $\a$ is equivalent to a morphism
represented by a matrix having one of the following forms:
\[
\left[
\ba{ccc}
1 & 0 & 0 \\
0 & 1 & 0 \\
0 & 0 & 1 \\
0 & 0 & 0
\ea
\right], \qquad \left[
\ba{ccc}
0 & 0 & 0 \\
1 & 0 & 0 \\
0 & u & 0 \\
0 & 0 & 1
\ea
\right], \qquad \left[
\ba{ccc}
0 & 0 & 0 \\
0 & 0 & 0 \\
u_1 & u_2 & 0 \\
0 & 0 & 1
\ea
\right],
\]
where $u \neq 0$ and $u_1, u_2$ are linearly independent one-forms.
In the first two cases $\b$ is injective, so it has the form
\[
\left[
\ba{ccc}
0 & 0 & 0 \\
1& 0 & 0 \\
0 & 1 & 0 \\
0 & 0 & v
\ea
\right]
\]
for some non-zero $v \in V^*$. We obtain the contradictory conclusions that $\ell_1, \ell_2$
are linearly dependent or that $\f \sim \f_1$.
In the third case we have $\Ker(\b) \isom \O(-2)$, hence $\b$ has one of the following two forms:
\[
\left[
\ba{ccc}
0 & 0 & 0 \\
0 & 1 & 0 \\
0 & 0 & 1 \\
0 & 0 & 0
\ea
\right] \qquad \text{or} \qquad \left[
\ba{ccc}
0 & 0 & 0 \\
0 & 0 & 0 \\
0 & 1 & 0 \\
0 & 0 & v
\ea
\right]
\]
for some non-zero $v \in V^*$.
We get $\f \sim \f_1$ or $\f \sim \f_2$, both contradictions.
Finally, assume that $\E$ has resolution (9).
Notice that $\b$, hence also $\a$, cannot be injective.
As $\a$ and $\a(1)$ are injective on global sections, we deduce
that $\a$ is equivalent to a morphism represented by a matrix having
one of the following forms:
\[
\left[
\ba{cccc}
0 & 0 & 0 & 0 \\
1 & 0 & 0 & 0 \\
0 & u_1 & u_2 & 0 \\
0 & 0 & 0 & 1
\ea
\right] \qquad \text{or} \qquad \left[
\ba{cccc}
0 & 0 & 0 & 0 \\
0 & 0 & 0 & 0 \\
u_1 & u_2 & u_3 & 0 \\
0 & 0 & 0 & 1
\ea
\right],
\]
where $u_1, u_2, u_3$ are linearly independent one-forms.
In the first case we have $\Ker(\b) \isom \O(-2)$, hence $\b$ has the form
\[
\left[
\ba{cccc}
0 & 0 & 0 & 0 \\
0 & 1 & 0 & 0 \\
0 & 0 & 1 & 0 \\
0 & 0 & 0 & v
\ea
\right]
\]
for some non-zero $v \in V^*$. We obtain $\f \sim \f_1$, which contradicts our hypothesis on $\f$.
In the second case we have $\Ker(\b) \isom \Om^1$.
This is absurd, $\Om^1$ cannot be a subsheaf of $4\O(-2)$.
\end{proof}

\noi
Consider the vector space $\U = \Hom(\O(-3) \oplus 2\O(-2), 2\O(-1))$ which is acted upon
by the algebraic group $G = (\Aut(\O(-3) \oplus 2\O(-2)) \times \Aut(2\O(-1)))/\C^*$.
We represent the elements of $G$ by pairs $(g,h)$ of matrices
\[
g = \left[
\ba{ccc}
g_{11} & 0 & 0 \\
u_{21} & g_{22} & g_{23} \\
u_{31} & g_{32} & g_{33}
\ea
\right], \quad h = \left[
\ba{cc}
h_{11} & h_{12} \\
h_{21} & h_{22}
\ea
\right].
\]
Inside $G$ we distinguish two subgroups: a reductive subgroup $G_{\text{red}}$
given by the conditions $u_{21}=0$, $u_{31}=0$ and a unitary subgroup $G'$
consisting of pairs $(g,h)$ of the form
\[
g = \left[
\ba{ccc}
1 & 0 & 0 \\
u_{21} & 1 & 0 \\
u_{31} & 0 & 1
\ea
\right], \quad h = \left[
\ba{cc}
1 & 0 \\
0 & 1
\ea
\right].
\]
Consider the open $G$-invariant subset $U \subset \U$ of morphisms
\[
\f = \left[
\ba{ccc}
q_1 & \ell_{11} & \ell_{12} \\
q_2 & \ell_{21} & \ell_{22}
\ea
\right] \quad \text{for which} \quad \left|
\ba{cc}
\ell_{11} & \ell_{12} \\
\ell_{21} & \ell_{22}
\ea
\right| \neq 0 \quad \text{and} \quad \left|
\ba{cc}
q_1 & \ell_{11} \\
q_2 & \ell_{21}
\ea
\right|, \quad \left|
\ba{cc}
q_1 & \ell_{12} \\
q_2 & \ell_{22}
\ea
\right|
\]
have linearly independent images in $\SS^3V^*/(\ell_{11} \ell_{22} - \ell_{12} \ell_{21})V^*$.

\begin{prop}
\label{4.2}
There exists a geometric quotient $U/G$ and it is a smooth projective
variety of dimension $10$. There exists a geometric quotient $U/G'$ and the canonical morphism
$U/G' \to U/G$ is a geometric quotient for the induced action of $G_{\text{red}}$ on $U/G'$.
\end{prop}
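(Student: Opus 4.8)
The plan is to construct $U/G$ in two stages by exploiting the Levi decomposition $G = G_{\text{red}} \ltimes G'$. Here $G'$ is precisely the unipotent radical of $G$: the group $\Aut(\O(-3) \oplus 2\O(-2))$ is $(\C^* \times \GL_2) \ltimes (V^*)^{\oplus 2}$, with unipotent radical the $\mathbb{G}_a^6$ formed by the entries $u_{21}, u_{31}$, and adjoining the $\GL_2$-factor $h$ and passing to the $\C^*$-quotient does not change it. So $G'$ is normal in $G$ and $G/G' \isom G_{\text{red}}$. Consequently, once a geometric quotient $U \to U/G'$ is available and $G_{\text{red}} = G/G'$ is seen to act on it, any geometric quotient of $U/G'$ by $G_{\text{red}}$ is automatically a geometric quotient $U/G$, with $U \to U/G' \to (U/G')/G_{\text{red}}$ the quotient map. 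Thus the second assertion of the proposition is formal once the first is proved, and it suffices to (i) produce $U/G'$, (ii) produce $(U/G')/G_{\text{red}}$, and (iii) check it is smooth, projective, of dimension $10$.

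For (i): the group $G' \isom (V^*)^{\oplus 2} \isom \mathbb{G}_a^6$ acts on $\f \in U$ by $q_i \mapsto q_i + u_{21}\ell_{i1} + u_{31}\ell_{i2}$, fixing the block $L = (\ell_{ij})$. I would first check this action is free on $U$: the stabiliser of $\f$ consists of pairs $(u_{21}, u_{31})$ with $L\,(u_{21}, u_{31})^{\mathsf t} = 0$ in $(\SS^2 V^*)^{\oplus 2}$, and since $\det L \neq 0$ and $\SS^\bullet V^*$ is a unique factorisation domain, a short argument — splitting into the cases where $\ell_{11}, \ell_{12}$ are coprime and where they share a linear factor — forces $u_{21} = u_{31} = 0$. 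The same computation shows that the image of the bundle map $S \times (V^*)^{\oplus 2} \to S \times (\SS^2 V^*)^{\oplus 2}$, $(L,u) \mapsto (L, Lu)$, is a rank-$6$ subbundle $\mathcal{K}$ over the smooth variety $S = \{L \in \Hom(2\O(-2), 2\O(-1)) : \det L \neq 0\}$, so $\mathcal{E} = (S \times (\SS^2 V^*)^{\oplus 2})/\mathcal{K}$ is a rank-$6$ vector bundle on $S$. The assignment $\f \mapsto (L, \overline{(q_1,q_2)})$ is a morphism from $U$ onto the open subvariety $\widetilde U \subset \mathcal{E}$ defined by requiring the two $2 \times 2$ minors involving the $q_i$ to be linearly independent modulo $\det(L)\,V^*$; this condition is $G'$-invariant, so it descends to $\mathcal{E}$, and the morphism is Zariski-locally trivial with fibres the $G'$-orbits, hence a geometric quotient. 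So $U/G'$ exists and is a smooth quasi-projective variety of dimension $\dim S + 6 = 18$.

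For (ii): $G_{\text{red}}$ is reductive of dimension $8$, and its action on $\mathcal{E} \to S$ is such that the central $\C^*$ coming from $g_{11}$ scales the fibres of $\mathcal{E}$ with a single nonzero weight, while the complementary reductive group $\overline G \isom (\GL_2 \times \GL_2)/\C^*$ acts through $L \mapsto hLb^{-1}$. Quotienting out the scaling $\C^*$ replaces $\mathcal{E}$ by the projective bundle $\P(\mathcal{E})$ and $\widetilde U$ by an open subset $\overline U \subset \P(\mathcal{E})$. I would then run geometric invariant theory for $\overline G$ on a projective completion of $\P(\mathcal{E})$ — the natural one lying over $\overline S = \P(\SS^2 V^*) = \P^5$ via $L \mapsto [\det L]$ — for a suitable $\overline G$-linearised ample line bundle, and identify the (semi)stable locus by the Hilbert--Mumford numerical criterion. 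The aim is to prove that every point of $\overline U$ is stable with trivial $\overline G$-stabiliser and that semistability coincides with stability on the relevant locus; then the GIT quotient is a geometric quotient, is projective by construction, and the tautological map $\overline U/\overline G \to {}$(GIT quotient) is an open immersion which is simultaneously proper, hence an isomorphism. Combining the two stages yields the projective geometric quotient $U/G$; it is smooth because $\overline U$ is smooth and $\overline G$ acts freely on it, and $\dim U/G = \dim U - \dim G = 24 - 14 = 10$.

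The technical heart — and the main obstacle — is step (ii): fixing the linearisation and carrying through the numerical criterion so that the stable locus is exactly $\overline U$ and coincides with the semistable one, so that the quotient comes out at once geometric (hence, given freeness, smooth) and complete. The delicate behaviour occurs over the discriminant in $\P^5$, i.e. for $L$ whose determinant is a singular conic: there the naive $\overline G$-stabilisers jump, and one must see that the ``linearly independent minors'' condition is exactly what restores stability and trivialises the stabilisers, and also that $U$ itself is a saturated open subset so that no collapsing of orbits takes place. A cleaner alternative, perhaps the route actually taken, is to bypass explicit GIT in the style of the proof of \ref{3.2}: show that $\f \mapsto \Coker(\f)$ (or an associated Beilinson monad) realises $U \to U/G$ as a categorical quotient onto its image, deduce geometricity from normality of the image together with \cite{popov-vinberg}, and establish projectivity separately by presenting $U/G$ as a relative Hilbert-type scheme over the projective space of conics $\det L$. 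In either approach, freeness of the full $G$-action on $U$ — a short case analysis according to the type of the conic $\det L$, using the minors condition — is what is needed to conclude smoothness and to read off the dimension.
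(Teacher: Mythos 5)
Your step (i) --- the construction of $U/G'$ as an open subset of a quotient vector bundle over the locus $\det L \neq 0$ --- is essentially the construction in the paper, and your freeness computation (Cramer plus the fact that $\SS^{\bullet}V^*$ is a domain) is correct. The genuine gap is step (ii), and you have flagged it yourself: you never actually produce the quotient of $U/G'$ by the reductive group $G_{\text{red}}$, nor establish its projectivity. Your GIT sketch leaves open exactly the points on which it would stand or fall (a $\overline{G}$-equivariant projective completion of $\P(\E)$, the choice of linearisation, the identification of the stable locus with $\overline{U}$ and the coincidence of stability with semistability there, saturation over the discriminant), and your fallback of presenting $U/G$ as a ``relative Hilbert-type scheme'' is unsubstantiated --- note that for $\f \in U$ the cokernel $\Coker(\f)$ is not one of the semi-stable sheaves of the moduli problem, so there is no ambient moduli space to receive a classifying map, in contrast with the situation of \ref{3.2}. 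Since the entire content of the first assertion of the proposition is the existence and projectivity of $U/G$, the argument is incomplete at its central point.

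The paper's route is different and self-contained on exactly this issue: it applies the Dr\'ezet--Trautmann theory of quotients of spaces of morphisms of decomposable sheaves by non-reductive groups directly to the $G$-action on $\U$. One chooses a polarisation $\L = (\l_1,\l_2,\m_1)$, uses King's criterion to prove $\U^{ss}(\L) = \U^{s}(\L) = U$, and then checks the numerical conditions 7.2.2 and 8.1.3 of that theory (with the constant $c_1(2)=1/5$, giving the admissible window $3/7 < \l_2 < 1/2$) which guarantee a projective good quotient containing the smooth geometric quotient $\U^{s}(\L)/G$ as an open subset; since semistable equals stable equals $U$, this open subset is everything and $U/G$ is smooth and projective. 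Only afterwards does the paper construct $U/G'$ (as you do) and then verify by hand that the induced map $U/G' \to U/G$ satisfies the axioms of a geometric quotient for $G_{\text{red}}$: fibres are orbits, images of closed invariant subsets are closed, invariant functions descend, and the map is affine. So the logical order is the reverse of yours, and your remark that the second assertion is ``formal'' presupposes precisely the construction you do not supply. To repair the proof you would either have to carry out your GIT analysis in full or, more economically, invoke the non-reductive quotient machinery as the paper does.
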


\begin{proof}
Let $\f$ be in $\U$. As mentioned at \ref{4.1}, $\f$ belongs to $U$ precisely if it is not
in the orbit of a morphism represented by a matrix having one of the following forms:
\[
\f_1 = \left[
\ba{ccc}
\star & 0 & 0 \\
\star & \star & \star
\ea
\right], \quad \f_2 = \left[
\ba{ccc}
\star & \star & 0 \\
\star & \star & 0
\ea
\right], \quad \f_3 = \left[
\ba{ccc}
0 & 0 & \star \\
\star & \star & \star
\ea
\right], \quad \f_4 = \left[
\ba{ccc}
0 & \star & \star \\
0 & \star & \star
\ea
\right].
\]
We will now use the theory of quotients modulo non-reductive groups developed in \cite{drezet-trautmann}.
Let $\L = (\l_1, \l_2, \m_1)$ be a polarisation for the action of $G$ on $\U$ satisfying the
conditions:
\[
\m_1 + 2\l_2 >1, \quad 2\m_1 + \l_2 > 1, \quad \m_1 + \l_1 + \l_2 > 1, \quad 2\m_1 + \l_1 > 1,
\]
\[
\m_1 + \l_1 < 1, \qquad \qquad \m_1 + \l_2 < 1.
\]
We claim that the set of semi-stable points relative to $\L$ coincides with the set of stable
points and coincides with $U$. This follows from King's criterion of semi-stability \cite{king}.
In our situation this criterion says the following (put $\l_3=\l_2$, $\m_2 =\m_1$):
a morphism $\f \in \U$ is semi-stable relative to $\L$ (respectively stable) precisely if for any morphism
$\psi$ in the orbit of $\f$, for any zero-submatrix $A$ of the matrix representing $\psi$,
the sum of the $\m_i$, $i=1, 2$, corresponding to the rows of $A$ and the $\l_j$, $j=1,2,3$, corresponding to the
columns of $A$ is at most $1$ (respectively less than $1$ in the stable case).
Take $\f \in \U^{ss}(\L)$. The first four conditions on $\L$ imply that $\f \nsim \f_i$
for $1 \le i \le 4$, hence $\f$ belongs to $U$. Given $\f$ in $U$, the only possible morphisms
$\psi$ in the orbit of $\f$ that are represented by a matrix having a zero-submatrix
must have one of the following forms:
\[
\psi_1 = \left[
\ba{ccc}
0 & \star & \star \\
\star & \star & \star
\ea
\right], \qquad \psi_2 = \left[
\ba{ccc}
\star & 0 & \star \\
\star & \star & \star
\ea
\right].
\]
In order to ensure that $\f$ be stable we must check the condition arising from King's criterion
for each possible equivalence $\f \sim \psi_i$.
For the equivalence $\f \sim \psi_1$ the condition $\m_1 + \l_1 < 1$ arises.
For the equivalence $\f \sim \psi_2$ the condition $\m_1 + \l_2 < 1$ arises.
These conditions are fulfilled by hypothesis. Thus $\f$ belongs to $\U^s(\L)$.
We have shown the inclusions $\U^{ss}(\L) \subset U \subset \U^s(\L)$,
hence these three sets coincide.

Relations 3.3.1 \cite{drezet-trautmann} for our situation read as follows:
$\l_1 > 0$, $\l_2 > 0$, $\l_1 + 2\l_2 =1$, $\m_1 = 1/2$.
In view of these relations, the polarisations $\L$ satisfying the six conditions from above
are precisely the polarisations $\L = (\l_1, \l_2, 1/2)$ for which $1/4 < \l_2 < 1/2$.

We quote below conditions 7.2.2 and 8.1.3 from op.cit. applicable to our
situation:
\[
\a_1 > 0, \qquad \a_2 > 0, \qquad \l_2 \ge \frac{a_{21}}{2} c_1(2), \qquad \l_2 \ge c_1(2) a_{21} \m_1.
\]
According to op.cit., propositions 6.1.1, 7.2.2 and 8.1.3, the above conditions are sufficient to ensure the
existence of a projective good quotient $\U^{ss}/\!\!/G$ which contains a smooth geometric quotient
$\U^s/G$ as an open subset.
Here $\a_1 = \l_1$, $\a_2 = \l_2 - a_{21}\l_1$, $a_{21} = \dim(\Hom(\O(-3),\O(-2)))=3$.
The constant $c_1(2)$ is defined at 7.1 op.cit.
According to remark 9.4.1 in the preliminary version of \cite{drezet-trautmann},
we have $c_1(2)=1/5$. The four sufficient conditions from above are equivalent to the inequalities
$3/7 < \l_2 < 1/2$.
Fix now a polarisation $\L$ satisfying these conditions.
We have $U = \U^{ss}(\L)= \U^s(\L)$, hence a geometric quotient $U/G$ exists and is a smooth
projective variety.

\medskip

\noi
Next we prove the existence of a geometric quotient $U/G'$.
Consider the open subset $U_0$ of injective morphisms inside $\Hom(2\O(-2), 2\O(-1))$.
Let $U_1$ be the trivial vector bundle over $U_0$ with fibre $\Hom(\O(-3), 2\O(-1))$.
$U$ is an open $G$-invariant subset of $U_1$.
Let $S \subset U_1$ be the sub-bundle with fibre $\{ \a u \mid u \in \Hom(\O(-3), 2\O(-2)) \}$
at every point $\a \in U_0$.
The quotient bundle $Q = U_1/S$ is a geometric quotient of $U_1$ modulo the action of $G'$.
The image of $U$ in $U_1/G'$ is the geometric quotient $U/G'$. Let $\pi' \colon U \to U/G'$
denote the quotient map.

The quotient map $\pi \colon U \to U/G$ is $G'$-equivariant, hence it factors through
a surjective morphism $\r \colon U/G' \to U/G$. We consider the action of $G_{\text{red}}$ on $U/G'$
defined by $g \pi'(\f) = \pi'(g \f)$. Clearly this action is well-defined and the fibres of $\r$
are $G_{\text{red}}$-orbits.

Let $F \subset U/G'$ be a closed $G_{\text{red}}$-invariant subset.
$\pi'^{-1}(F)$ is closed and $G$-invariant, hence $\r(F) = \pi(\pi'^{-1}(F))$ is closed in $U/G$.

Let $D \subset U/G$ be an open subset and let $f \colon D \to \C$ be a function such that $f \circ \r$
is regular. The composition $f \circ \pi = f \circ \r \circ \pi'$ is regular hence, by the fact that $\pi$ is a geometric
quotient map, we deduce that $f$ is regular. Thus $\r^*$ maps $\O(D)$ isomorphically to
$\O(\r^{-1}(D))^{G_{\text{red}}}$.

Let $D \subset U/G$ be an open affine subset. Since $\pi$ is affine, $\pi^{-1}(D)$
is an open affine subset of $U$.
Notice that $\pi' \colon U \to U/G'$ is a locally trivial principal $G'$-bundle with fibre $G'$.
It follows that $\r^{-1}(D)$ can be identified with a closed
subvariety of $\pi'^{-1}(\r^{-1}(D))= \pi^{-1}(D)$, hence $\r^{-1}(D)$ is affine.
Thus $\r$ is an affine map. From all that was said above we conclude that $\r$
is a geometric quotient map.
\end{proof}

\noi
Let $\W_2 = \Hom(\O(-3) \oplus 2\O(-2) \oplus \O(-1), 2\O(-1) \oplus 2\O)$ and let $W_2 \subset \W_2$
be the set of morphisms $\f$ from proposition \ref{4.1}.
Let
\[
G_2 = (\Aut(\O(-3) \oplus 2\O(-2) \oplus \O(-1)) \times \Aut(2\O(-1) \oplus 2\O))/\C^*
\]
be the natural group acting by conjugation on $\W_2$.
Let $X_2 \subset \M(6,1)$ be the set of isomorphism classes of sheaves of the form $\Coker(\f)$,
$\f \in W_2$. Note that $X_2$ is a locally closed subset; we equip it with the canonical induced reduced
structure.

\begin{prop}
\label{4.3}
There exists a geometric quotient $W_2/G_2$ and it is a smooth quasi-projective
variety of dimension $33$. Let $Y$ be the geometric quotient $U/G$ from proposition \ref{4.2}.
$W_2/G_2$ is an open subset inside a fibre bundle with fibre $\P^{21}$ and base $Y \times \P^2$.
\end{prop}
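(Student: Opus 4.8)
The plan is to realise $W_2/G_2$ by iterated quotients, reducing the ``Kronecker part'' to Proposition \ref{4.2} and the rest to a projectivised vector bundle, in the spirit of Proposition \ref{2.2}. Write $\f\in W_2$ in block form
\[
\f=\left[\ba{cc}\psi & 0\\ \mu & \l\ea\right],\qquad
\psi\in\U=\Hom(\O(-3)\oplus2\O(-2),2\O(-1)),\quad \l\in\Hom(\O(-1),2\O),
\]
with $\mu\in\mathcal M:=\Hom(\O(-3)\oplus2\O(-2),2\O)$ and with the upper-right block $\f_{13}\in\Hom(\O(-1),2\O(-1))$ vanishing by Proposition \ref{4.1}. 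Since $\Hom(2\O,2\O(-1))=0$, the locus $\{\f_{13}=0\}$ is $G_2$-invariant, and the conditions defining $W_2$ say exactly that $\psi$ lies in the set $U$ of Proposition \ref{4.2}, that $\l=(\ell_1,\ell_2)^{\T}$ has linearly independent entries (so that it determines a point of $\Grass(2,V^*)\isom\P^2$), and that $\f$ is injective. Thus $W_2$ is a $G_2$-invariant open subset of the affine space $\{\f_{13}=0\}$, of dimension $24+6+44=74$. Inside $G_2$ sit: the unipotent normal subgroup $R'$ given by the blocks $n\in\Hom(\O(-3)\oplus2\O(-2),\O(-1))$ of $\Aut(\A)$ and $x\in\Hom(2\O(-1),2\O)$ of $\Aut(\B)$, acting by $\mu\mapsto\mu+x\psi-\l n$ and trivially on $\psi,\l$; the group $G$ of Proposition \ref{4.2}, acting on $(\psi,\mu)$ and trivially on $\l$; and $\Aut(2\O)\times\Aut(\O(-1))$, acting on $(\mu,\l)$ and trivially on $\psi$.

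First one quotients by $R'$. For $(\psi,\l)$ as above the linear map $(x,n)\mapsto x\psi-\l n$ from $\Hom(2\O(-1),2\O)\oplus\Hom(\O(-3)\oplus2\O(-2),\O(-1))$ to $\mathcal M$ has kernel exactly $\{(\l r,r\psi):r\in\Hom(2\O(-1),\O(-1))\}$: if $x\psi=\l n$ then the right-hand side has rank $\le1$ while $\psi$ is generically surjective, forcing $\rank(x)\le1$; writing out the resulting proportionality and using that $\ell_1,\ell_2$ are linearly independent gives $x=\l r$, $n=r\psi$. Since $\psi\in U$ forces $r\psi\neq0$ for $r\neq0$, this kernel is $2$-dimensional for every such $(\psi,\l)$, so the images $\mathcal I_{(\psi,\l)}\subset\mathcal M$ have constant dimension $22$ and form a sub-bundle of the trivial bundle over $U\times\Lambda$, where $\Lambda\subset\Hom(\O(-1),2\O)$ is the open set of morphisms of rank $2$. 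Hence $R'$ acts with closed orbits, the quotient of $U\times\Lambda\times\mathcal M$ by $R'$ is the total space of the rank-$22$ vector bundle $\mathcal V:=(\mathcal M\times(U\times\Lambda))/\mathcal I$ over $U\times\Lambda$, and $W_2/R'$ is an open subset of $\mathcal V$, of dimension $52$.

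Next, $G$ acts on $\mathcal V$ covering its action on $U$; since $U\to Y=U/G$ is a geometric quotient with finite stabilisers (Proposition \ref{4.2}), the equivariant bundle $\mathcal V$ descends to a rank-$22$ vector bundle $\overline{\mathcal V}$ on $Y\times\Lambda$. Finally $\Aut(2\O)\times\Aut(\O(-1))\isom\GL(2,\C)\times\C^*$ acts on $\Lambda$ with geometric quotient $\Grass(2,V^*)\isom\P^2$, and the stabiliser of a point of $\Lambda$ is a copy of $\C^*$ that acts on the fibres of $\overline{\mathcal V}$ by scalar multiplication. Removing the zero-section (which $W_2$ avoids) and passing to the quotient produces a fibre bundle $\mathcal Z$ over $Y\times\P^2$ with fibre $\P^{21}$, and $W_2/G_2=((W_2/R')/G)/(\Aut(2\O)\times\Aut(\O(-1)))$ is an open subset of $\mathcal Z$. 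This yields all assertions: $W_2/G_2$ exists as an iterated geometric quotient (by a unipotent group with affine orbits, then by Proposition \ref{4.2}, then by a reductive group on a Stiefel-type variety); it is open in the $\P^{21}$-bundle $\mathcal Z$ over the smooth projective variety $Y\times\P^2$, hence smooth and quasi-projective; and $\dim W_2/G_2=\dim Y+2+21=33$, which also matches $\dim W_2-\dim G_2+2=74-43+2$, the $2$-dimensional generic stabiliser of $G_2$ on $W_2$ coming from $\Hom(\B,\A)\isom\Hom(2\O(-1),\O(-1))\isom\C^2$ (the resolution of Proposition \ref{4.1} is not minimal). Alternatively, the mere existence of the geometric quotient could be obtained as in the proof of Proposition \ref{3.2} — a Beilinson spectral sequence argument giving that $W_2\to X_2$ is a categorical quotient, the normality of $X_2$, and \cite{popov-vinberg} — but the fibre-bundle description needs the construction above.

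The main obstacle is the constant-rank statement for the family $\{\mathcal I_{(\psi,\l)}\}$, i.e.\ showing that the codimension of $\mathcal I_{(\psi,\l)}$ in $\mathcal M$ stays equal to $22$ over \emph{all} of $U\times\Lambda$, together with the bookkeeping of the several copies of $\C^*$ inside $G_2=(\Aut(\A)\times\Aut(\B))/\C^*$: it is exactly the way these scalars interact (through the stabiliser of $\l$ and through the diagonal $\C^*$) that turns the last quotient into a projective rather than a vector bundle, and one must track this to get $\P^{21}$ and not a rank-$22$ affine bundle. Verifying that $\mathcal V$ descends along $U\to Y$ and that each stage is a \emph{geometric} (not merely good) quotient is routine once Proposition \ref{4.2} is available.
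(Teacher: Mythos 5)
Your overall strategy is the same as the paper's (which in turn follows 3.2.3 of \cite{mult_five}): realise $W_2$ inside a trivial bundle over $U\times\Lambda$ with fibre $\Hom(\O(-3)\oplus2\O(-2),2\O)$, quotient by the unipotent translations to obtain a rank-$22$ bundle, descend along the quotient of the base, and projectivise. The computation of the kernel $\{(\f_{23}\,r,\ r\psi)\}$ of the translation action, the resulting rank count $44-22=22$, the dimension count $10+2+21=33$, and the final $\P^{21}$-bundle over $Y\times\P^2$ all agree with the paper, which performs the same steps in a slightly different order (first $G_2'$, then $G_2''$, then $S$, then ${G_2}_{\text{red}}/S$).

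There is, however, a genuine gap at the stage where you claim that $\mathcal V$ descends to a vector bundle $\overline{\mathcal V}$ on $Y\times\Lambda$ ``since $U\to Y$ is a geometric quotient with finite stabilisers''. The group that acts on the pair $(\psi,\mu)$ is $\widehat G=\Aut(\O(-3)\oplus2\O(-2))\times\Aut(2\O(-1))$ and not its central quotient $G$: the class of $(\lambda\,\mathrm{id},\lambda\,\mathrm{id})$ is trivial in $G$ but sends $\mu$ to $\lambda^{-1}\mu$, so the $G$-action on $\mathcal V$ that you invoke is not defined. Equivalently, every point of $U$ has stabiliser $\C^*$ in $\widehat G$ (one-dimensional, not finite), and this $\C^*$ acts on the fibre of $\mathcal V$ by a non-trivial scalar; by the descent criterion (\cite{huybrechts}, lemma 4.2.15, which moreover requires the stabiliser to act \emph{trivially} on the fibre, not merely to be finite), the untwisted bundle does not descend to $Y\times\Lambda$. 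This is exactly the point the paper handles by isolating the subgroup $S$, passing to $\P(E)$ to kill the scalar action, and twisting by the character $\chi(g,h)=\det(g)\det(h)^{-1}$ before descending. Your construction is repairable in the same way — twist by a character of $\widehat G$, or descend the projectivised bundle rather than the vector bundle, since the scalar stabilisers act trivially on $\P(\mathcal V)$ — and your final projectivisation then absorbs the remaining $\C^*$ coming from the stabiliser of a point of $\Lambda$, so the conclusion stands. A smaller point in the same step: $G$ is not reductive, so the descent should be split, as in the paper, into descent along the locally trivial principal $G'$-bundle $U\to U/G'$ and descent along the quotient by the reductive part.
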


\begin{proof}
The proof is almost identical to the proof of 3.2.3 \cite{mult_five} with notational differences only.
Let $W_2' \subset \W_2$ be the locally closed subset of morphisms $\f$ satisfying the conditions
from proposition \ref{4.1} except injectivity. The pairs of morphisms $(\f_{11}, \f_{22})$ form an open subset
$U_1 \subset \Hom(\O(-3) \oplus 2\O(-2), 2\O(-1))$ and the morphisms $\f_{23}$ form an open subset
$U_2 \subset \Hom(\O(-1), 2\O)$. We denote $U = U_1 \times U_2$.
$W_2'$ is the trivial vector bundle on $U$ with fibre $\Hom(\O(-3) \oplus 2\O(-2), 2\O)$.
We represent the elements of $G_2$ by pairs $(g,h)$ of matrices
\[
g = \left[
\ba{cccc}
g_{11} & 0 & 0 & 0 \\
u_{21} & g_{22} & g_{23} & 0 \\
u_{31} & g_{32} & g_{33} & 0 \\
u_{41} & u_{42} & u_{43} & g_{44}
\ea
\right], \quad h= \left[
\ba{cccc}
h_{11} & h_{12} & 0 & 0 \\
h_{21} & h_{22} & 0 & 0 \\
v_{31} & v_{32} & h_{33} & h_{34} \\
v_{41} & v_{42} & h_{43} & h_{44}
\ea
\right].
\]
Inside $G_2$ we distinguish four subgroups: a reductive subgroup ${G_2}_{\text{red}}$
given by the conditions $u_{ij}=0$, $v_{ij}=0$, the subgroup $S$ of pairs $(g, h)$ of the form
\[
g = \left[
\ba{cccc}
a & 0 & 0 & 0 \\
0 & a & 0 & 0 \\
0 & 0 & a & 0 \\
0 & 0 & 0 & b
\ea
\right], \quad h = \left[
\ba{cccc}
a & 0 & 0 & 0 \\
0 & a & 0 & 0 \\
0 & 0 & b & 0 \\
0 & 0 & 0 & b
\ea
\right],
\]
with $a, b \in \C^*$, and two unitary subgroups $G_2'$, $G_2''$.
Here $G_2'$ consists of pairs $(g, h)$ of morphisms of the form
\[
g = \left[
\ba{cccc}
1 & 0 & 0 & 0 \\
0 & 1 & 0 & 0 \\
0 & 0 & 1 & 0 \\
u_{41} & u_{42} & u_{43} & 1
\ea
\right], \quad h = \left[
\ba{cccc}
1 & 0 & 0 & 0 \\
0 & 1 & 0 & 0 \\
v_{31} & v_{32} & 1 & 0 \\
v_{41} & v_{42} & 0 & 1
\ea
\right],
\]
while $G_2''$ is given by pairs $(g, h)$, where
\[
g = \left[
\ba{cccc}
1 & 0 & 0 & 0 \\
u_{21} & 1 & 0 & 0 \\
u_{31} & 0 & 1 & 0 \\
0 & 0 & 0 & 1
\ea
\right], \quad h = \left[
\ba{cccc}
1 & 0 & 0 & 0 \\
0 & 1 & 0 & 0 \\
0 & 0 & 1 & 0 \\
0 & 0 & 0 & 1
\ea
\right].
\]
Note that $G_2 = G_2' G_2'' {G_2}_{\text{red}}$. Consider the $G_2$-invariant subset
$\Si \subset W_2'$ given by the conditions
\[
\f_{21} = v \f_{11} + \f_{23} w_1, \quad \f_{22} = v \f_{12} + \f_{23} w_2,
\]
\[
v \in \Hom(2\O(-1), 2\O), \quad w_1 \in \Hom(\O(-3), \O(-1)), \quad w_2 \in \Hom(2\O(-2), \O(-1)).
\]
$W_2$ is the subset of injective morphisms inside $W_2' \setminus \Si$.
We will construct a geometric quotient of $W_2' \setminus \Si$ modulo $G_2$ and it will follow
that $W_2/G_2$ exists and is an open subset of $(W_2' \setminus \Si)/G_2$.

It is easy to see that $\Si$ is a sub-bundle of $W_2'$. The quotient bundle, denoted $E'$, has rank $22$.
The quotient map $W_2' \to E'$ is a geometric quotient modulo $G_2'$.
The canonical action of $G_2'' {G_2}_{\text{red}}$ on $U$ is $E'$-linearised
and the map $W_2' \to E'$ is $G_2'' {G_2}_{\text{red}}$-equivariant.

Next we construct a geometric quotient of $E'$ modulo $G_2''$.
The quotient for the base $U$ is $(U_1/G') \times U_2$, where $G'$ is the group from proposition \ref{4.2}.
The quotient $U_1/G'$ was constructed in the proof of loc.cit., where it was noticed that the quotient
map $U_1 \to U_1/G'$ is a locally trivial principal $G'$-bundle with fibre $G'$.
Thus $U \to U/G_2''$ is a locally trivial principal $G_2''$-bundle with fibre $G_2''$.
According to 4.2.14 \cite{huybrechts}, $E'$ descends to a vector bundle $E$ over $U/G_2''$.
The canonical map $E' \to E$ is a geometric quotient modulo $G_2''$.
The composed map $W_2' \to E' \to E$ is a geometric quotient modulo $G_2' G_2''$.
The canonical induced action of ${G_2}_{\text{red}}$ on $U/G_2''$ is linearised with respect to $E$
and the map $W_2' \to E$ is ${G_2}_{\text{red}}$-equivariant.

Let $x \in U/G_2''$ be a point and let $\xi \in E_x$ be a non-zero vector lying over $x$.
The stabiliser of $x$ in ${G_2}_{\text{red}}$ is $S$ and $S \xi = \C^* \xi$.
Let $\s$ be the zero-section of $E$. The canonical map $E \setminus \s \to \P(E)$
is a geometric quotient modulo $S$.

We finally construct a geometric quotient of $\P(E)$ modulo the induced action of ${G_2}_{\text{red}}/S$.
By proposition \ref{4.2}, $(U/G_2'')/({G_2}_{\text{red}}/S)$ exists and is isomorphic to the smooth $12$-dimensional
projective variety $Y \times \P^2$.
We consider the character $\chi$ of ${G_2}_{\text{red}}$ given by $\chi(g,h)= \det(g) \det(h)^{-1}$.
We multiply the action of ${G_2}_{\text{red}}$ on $E$ by $\chi$ and we denote the resulting
linearised bundle by $E_{\chi}$.
The action of $S$ on $E_{\chi}$ is trivial, hence $E_{\chi}$ is ${G_2}_{\text{red}}/S$-linearised.
The isotropy subgroup in ${G_2}_{\text{red}}/S$ for any point in $U/G_2''$ is trivial,
so we can apply \cite{huybrechts}, lemma 4.2.15, to deduce that $E_{\chi}$ descends to a vector bundle
$F$ over $Y \times \P^2$. The map $E_{\chi} \to F$ is a geometric quotient modulo ${G_2}_{\text{red}}/S$.
The same can be said of the map $\P(E_{\chi}) \to \P(F)$.

We conclude by observing that the composed map $W_2' \setminus \Si \to E \setminus \s \to \P(E) \to \P(F)$
is a geometric quotient modulo $G_2$ and that $W_2/G_2$ exists and is a proper open subset of $\P(F)$.
\end{proof}

\begin{prop}
\label{4.4}
The geometric quotient $W_2/G_2$ is isomorphic to $X_2$.
\end{prop}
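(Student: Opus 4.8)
The plan is to construct the canonical morphism $W_2 \to X_2$, factor it through the geometric quotient of Proposition \ref{4.3}, and then argue that the resulting bijective morphism is an isomorphism, following the pattern of Proposition \ref{3.2}.

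First I would observe that the sheaves $\Coker(\f)$, $\f \in W_2$, form a flat family over $W_2$, since the resolution of Proposition \ref{4.1} has fixed terms and hence constant Hilbert polynomial; this family is classified by a morphism $\rho \colon W_2 \to \M(6,1)$ whose image is $X_2$. Because $\Coker(h \f g^{-1}) \isom \Coker(\f)$ for $(g,h)$ representing an element of $G_2$, the morphism $\rho$ is $G_2$-invariant, so by the universal property of the geometric quotient from Proposition \ref{4.3} it factors as $\rho = \bar\rho \circ \pi$, with $\pi \colon W_2 \to W_2/G_2$ the quotient map and $\bar\rho \colon W_2/G_2 \to X_2$ a morphism. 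The map $\bar\rho$ is surjective by the definition of $X_2$, and it is injective because the fibres of $\rho$ are precisely the $G_2$-orbits: any isomorphism $\Coker(\f) \isom \Coker(\f')$ between two sheaves with resolutions as in \ref{4.1} lifts to an isomorphism of resolutions — the terms $\O(-3) \oplus 2\O(-2) \oplus \O(-1)$ and $2\O(-1) \oplus 2\O$ are pinned down by the cohomology of the cokernel, and the lift exists thanks to the vanishing of the relevant $\Ext^1$ and $\Hom$ groups between the summands — and every such lift is a pair in $\Aut(\O(-3) \oplus 2\O(-2) \oplus \O(-1)) \times \Aut(2\O(-1) \oplus 2\O)$ acting by conjugation on $\f$, i.e. an element of $G_2$.

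To upgrade $\bar\rho$ to an isomorphism I would show that $\rho \colon W_2 \to X_2$ is a categorical quotient map, using the method of 3.1.6 \cite{drezet-maican}. This requires checking that the resolution of Proposition \ref{4.1} is recovered from $\F$ in a manner compatible with base change: for a flat family of sheaves from $X_2$ the Beilinson monad of \ref{3.1} globalises (it is built from the resolution of the diagonal, and the relevant cohomology dimensions are constant on the irreducible variety $W_2$), and all the reductions performed in \ref{3.1}--\ref{4.1} — the identity $\rank(\f_{12}) = 3$, the exclusion of $m = 2$ and of $m = 0$ so that $m = 1$, and the identification of the intermediate cokernel with $\Om^1$ twisted up plus line bundles — hold in families, being natural up to the $G_2$-action; as in 3.2.3--3.2.4 \cite{mult_five}, this last identification uses that the relevant intermediate cokernel has no zero-dimensional torsion, which is automatic in the Beilinson setup. (One could instead run the argument through the Beilinson spectral sequence of the dual sheaf $\G = \F^\D(1) \in \M(6,5)$, which satisfies $\h^0(\G(-1)) = 1$, $\h^1(\G) = 0$, $\h^1(\G \tensor \Om^1(1)) = 1$, as in the proof of \ref{3.2}.) Granting that $\rho$ is a categorical quotient map, and using that $W_2$ is smooth — it is an open subset of the linear subspace of $\W_2$ consisting of matrices of the shape displayed in \ref{4.1} — hence normal, I conclude that $X_2$ is normal by \cite{mumford}, remark (2), p.~5. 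Theorem 4.2 of \cite{popov-vinberg} then shows that $\rho$ is a geometric quotient map, and since the quotient map of Proposition \ref{4.3} is also a geometric quotient of $W_2$ modulo $G_2$, uniqueness of geometric quotients yields $W_2/G_2 \isom X_2$.

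The main obstacle is this middle step: re-deriving the resolution of Proposition \ref{4.1} over an arbitrary base, so as to obtain the full categorical-quotient property rather than the mere set-theoretic bijection $\bar\rho$. The technique is exactly that of 3.1.6 \cite{drezet-maican} and 3.2.3--3.2.4 \cite{mult_five}, but one must keep careful track of the higher direct images responsible for the appearance of the $\Om^1$-summand in families, and verify that the non-degeneracy conditions defining $W_2$ cut out precisely the family of resolutions parametrised over $X_2$. Once that is done, the remaining assertions — surjectivity and injectivity of $\bar\rho$, normality of $X_2$, and the invocation of Popov--Vinberg — are purely formal.
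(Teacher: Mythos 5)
Your proposal is correct and follows essentially the same route as the paper: the heart of the matter is showing that the resolution of Proposition \ref{4.1} is obtained naturally (in families) from the Beilinson spectral sequence of $\F$ or of its dual $\G=\F^\D(1)$, so that $\r\colon W_2\to X_2$ is a categorical quotient map, after which uniqueness of quotients gives the isomorphism with the quotient constructed in Proposition \ref{4.3}. The only difference is cosmetic: since the geometric quotient of \ref{4.3} is in particular a categorical quotient, the normality and Popov--Vinberg step you add is not needed here (unlike in \ref{3.2}, where no independent construction of the quotient was available).
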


\begin{proof}
As at \ref{3.2}, we will show that the canonical morphism $\r \colon W_2 \to X_2$
is a categorical quotient map.
The isomorphism $W_2/G_2 \isom X_2$ will follow from the uniqueness of the categorical quotient.
Consider the sheaf $\G = \F^\D(1)$.
Diagram (2.2.3) \cite{drezet-maican} for $\G$ takes the form
\[
\xymatrix
{
2\O(-2) \ar[r]^-{\f_1} & \O(-1) & 0 \\
\O(-2) \ar[r]^-{\f_3} & 5\O(-1) \ar[r]^-{\f_4} & 5\O
}.
\]
$\Coker(\f_1)$ is isomorphic to the structure sheaf $\C_x$ of a point because it is a quotient sheaf of $\G$.
Thus $\Ker(\f_1) \isom \O(-3)$ and the exact sequence (2.2.5) \cite{drezet-maican} reads
\[
0 \lra \O(-3) \stackrel{\f_5}{\lra} \Coker(\f_4) \lra \G \lra \C_x \lra 0.
\]
We see from this that $\Coker(\f_4)$ has no zero-dimensional torsion, which allows us to deduce,
as in the proof of 3.2.4 \cite{mult_five}, that $\Coker(\f_3) \isom 2\O(-1) \oplus \Om^1(1)$.
From (2.2.4) \cite{drezet-maican} we get the exact sequence
\[
0 \lra 2\O(-1) \oplus \Om^1(1) \lra 5\O \lra \Coker(\f_4) \lra 0.
\]
We now apply the horseshoe lemma to the extension
\[
0 \lra \Coker(\f_5) \lra \G \lra \C_x \lra 0
\]
and to the resolutions
\[
0 \lra \O(-3) \lra \Coker(\f_4) \lra \Coker(\f_5) \lra 0,
\]
\[
0 \lra \O(-3) \lra 2\O(-2) \lra \O(-1) \lra \C_x \lra 0.
\]
The morphism $\O(-1) \to \C_x$ lifts to a morphism $\O(-1) \to \G$ because
$\H^1(\Coker(\f_5)(1))$ vanishes. We obtain the resolution
\[
0 \lra \O(-3) \lra \O(-3) \oplus 2\O(-2) \lra \O(-1) \oplus \Coker(\f_4) \lra \G \lra 0.
\]
The map $\O(-3) \to \O(-3)$ above is non-zero because $\H^1(\G)$ vanishes.
Canceling $\O(-3)$ and combining with the above resolution of $\Coker(\f_4)$
yields the exact sequence
\[
0 \lra 2\O(-2) \oplus 2\O(-1) \oplus \Om^1(1) \lra \O(-1) \oplus 5\O \lra \G \lra 0.
\]
Dualising we get the exact sequence
\[
0 \lra 5\O(-2) \oplus \O(-1) \lra \Om^1 \oplus 2\O(-1) \oplus 2\O \lra \F \lra 0.
\]
As in the proof of \ref{3.1}, the above leads to a resolution of the form
\[
0 \lra \O(-3) \oplus 2\O(-2) \oplus \O(-1) \lra 2\O(-1) \oplus 2\O \lra \F \lra 0
\]
in which the map $\O(-1) \to 2\O(-1)$ is zero. In conclusion, we have obtained resolution \ref{4.1}
in a natural manner from the Beilinson spectral sequence of $\F$ (or of $\G$).
This allows us to conclude, as at 3.1.6 \cite{drezet-maican}, that $\r$ is a categorical quotient map.
\end{proof}

%%%%%%%%%%%%%%%%%%%%%%%%%%%%%%%%%%%%% section 5

\section{The codimension $6$ stratum}

\begin{prop}
\label{5.1}
The sheaves $\F$ in $\M(6,1)$ satisfying the conditions $\h^1(\F)=2$ and
$\h^0(\F(-1))=0$ are precisely the sheaves with resolution
\[
0 \lra 2\O(-3) \oplus 2\O(-1) \stackrel{\f}{\lra} \O(-2) \oplus 3\O \lra \F \lra 0,
\]
where $\f_{11}$ has linearly independent entries and $\f_{22}$ has linearly independent maximal minors.
\end{prop}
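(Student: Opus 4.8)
The plan is to argue along the lines of propositions \ref{3.1} and \ref{4.1}. In one direction one extracts the resolution from the Beilinson monad of $\F$ and deduces the conditions on $\f$ from semi-stability; in the other direction one recomputes the cohomology from the resolution and rules out destabilising subsheaves.

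\emph{From the cohomology to the resolution.} Suppose $\F$ gives a point of $\M(6,1)$ with $\h^1(\F)=2$ and $\h^0(\F(-1))=0$. Then $\h^0(\F)=3$, $\h^1(\F(-1))=5$, all second cohomology groups of twists of $\F$ vanish, and with $m=\h^0(\F\tensor\Om^1(1))$ one has $\h^1(\F\tensor\Om^1(1))=m+4$. As in the proofs of \ref{3.1} and 2.1.4, 3.2.4 \cite{mult_five}, the Beilinson monad of $\F$,
\[
0 \lra 5\O(-2)\oplus m\O(-1) \lra (m+4)\O(-1)\oplus 3\O \lra 2\O \lra 0,
\]
has cohomology $\F$, and its last map vanishes on the summand $\H^0(\F)\tensor\O=3\O$; hence $(m+4)\O(-1)\to 2\O$ is surjective and $\F=\Coker\bigl(5\O(-2)\oplus m\O(-1)\to\mathcal K\oplus 3\O\bigr)$ with $\mathcal K=\ker\bigl((m+4)\O(-1)\to 2\O\bigr)$. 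Injectivity of the monad differential together with the semi-stability of $\F$ (were $m\ge 3$, the cokernel of the block $m\O(-1)\to 3\O$ would destabilise $\F$, having slope $1$) force $m=2$, and then $\mathcal K\isom\Om^1\oplus\Om^1$. Resolving each copy of $\Om^1$ by $0\to\O(-3)\to 3\O(-2)\to\Om^1\to 0$ and cancelling the $5\O(-2)$ against five of the $6\O(-2)$ so obtained — a rank count on the scalar block as in \ref{3.1} — leaves
\[
0 \lra 2\O(-3)\oplus 2\O(-1) \stackrel{\f}{\lra} \O(-2)\oplus 3\O \lra \F \lra 0,
\]
the block $2\O(-1)\to\O(-2)$ being $0$ for degree reasons and one copy of $\O(-2)$ surviving in the target; the two copies of $\O(-3)$ arise, one from each $\Om^1$, in contrast with the single copy of \ref{3.1}. (In particular $m=2$, which can also be read off directly from this resolution, so the invariant $\h^0(\F\tensor\Om^1(1))$ is determined by the other two.) That the two entries of $\f_{11}$ are linearly independent and the three maximal minors of $\f_{22}$ are linearly independent is then forced by semi-stability: a dependence among the entries of $\f_{11}$ would make $\Coker(\f_{11})$ one-dimensional, supported on a line, and the comparison diagrams used below would extract from it a subsheaf of $\F$ of slope $>1/6$; a dependence among the minors of $\f_{22}$ would give a destabilising quotient.

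\emph{The converse.} Let $\F=\Coker(\f)$ with $\f$ injective — so that the displayed sequence really is a resolution — and with $\f_{11}$, $\f_{22}$ as above. A direct computation from the resolution gives $\h^1(\F)=2$, $\h^0(\F(-1))=0$ and $\h^0(\F\tensor\Om^1(1))=2$, so it remains to prove $\F$ semi-stable. Otherwise it has a destabilising subsheaf, which — as in \ref{3.1}, \ref{4.1} — may be taken semi-stable, of some multiplicity $r$ with $1\le r\le 5$. Since $\H^0(\E)\hookrightarrow\H^0(\F)$ and $\H^0(\E(-1))\hookrightarrow\H^0(\F(-1))=0$, one has $\h^0(\E(-1))=0$ and $1\le\chi(\E)\le 3$. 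Running through the classification of $\M(r,\chi)$, $1\le r\le 5$, $1\le\chi\le 3$, in \cite{drezet-maican} and \cite{mult_five} — most low-multiplicity cases being excluded at once by $\h^0(\E(-1))=0$ — yields a finite list of possible length-one resolutions of $\E$. For each, the inclusion $\E\hookrightarrow\F$ fits into a commutative diagram with the resolution of $\F$ in which the vertical maps, and their twists by $1$ restricted to the appropriate summands, are injective, being injective on global sections; chasing the diagram and identifying the kernels ($\O(-2)$, $\O(-3)$ or $\Om^1$ in the exceptional cases, exactly as in \ref{3.1}, \ref{4.1}) forces $\f$ to be equivalent to a matrix with a zero block whose shape contradicts either the linear independence of the entries of $\f_{11}$ or that of the maximal minors of $\f_{22}$. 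This contradiction finishes the proof.

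The delicate point is the first half: in contrast with \ref{3.1}, the reduction of the Beilinson monad is not a single cancellation, and one must establish $m=2$ and the isomorphism $\ker\bigl(6\O(-1)\to 2\O\bigr)\isom\Om^1\oplus\Om^1$, which is what makes two copies of $\O(-3)$ survive. The converse is longer but routine; the only genuine labour there is compiling the complete list of admissible $\E$ — notably the multiplicity-$4$ and $5$ sheaves with $\chi(\E)=3$ and $\h^0(\E(-1))=0$ — and verifying the injectivity of the comparison maps case by case.
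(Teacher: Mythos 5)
Your first half follows the paper's strategy (transposed from the dual sheaf $\G=\F^\D(1)$ to $\F$ itself), but it has a real gap at the determination of $m=\h^0(\F\tensor\Om^1(1))$: your parenthetical argument only excludes $m\ge 3$ (and even there the block $m\O(-1)\to 3\O$ need not be injective, so the slope computation for its cokernel needs care). You never exclude $m=0$ and $m=1$, for which the monad would produce a different resolution; the paper explicitly devotes a step to this, eliminating those cases by the arguments of 3.1.3 of \cite{mult_five}. The circular remark that $m=2$ ``can be read off from the resolution'' does not repair this, since the resolution was derived assuming $m=2$. Relatedly, the isomorphism $\Ker(6\O(-1)\to 2\O)\isom 2\Om^1$ is not automatic for a surjective $2\times 6$ matrix of linear forms (for instance a surjective matrix with a zero column has $\O(-1)$ as a direct summand of its kernel, and $\O(-1)$ does not embed in $2\Om^1$); you correctly flag this as the delicate point, but it must actually be proved, via the non-existence of non-zero morphisms $\O_L(1)\to\G$ as in 3.2.5 of \cite{mult_five}.

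The converse is where you diverge most from the paper, and where the largest amount of unverified work sits. You propose to enumerate all semi-stable candidates $\E$ with $1\le r\le 5$, $1\le\chi(\E)\le 3$, $\h^0(\E(-1))=0$, and to run comparison diagrams as in \ref{3.1} and \ref{4.1}. That list is considerably longer than in those propositions (now $\chi(\E)$ can be $3$, so the classifications of $\M(4,3)$ and $\M(5,3)$ enter), you do not compile it, and you do not verify that each case contradicts only the two stated hypotheses on $\f_{11}$ and $\f_{22}$; as written this is a plan, not a proof. The paper avoids the enumeration entirely: the snake lemma exhibits $\F$ as an extension $0\to\F'\to\F\to\C_x\to 0$, where $\C_x$ is the structure sheaf of the point cut out by the entries of $\f_{11}$ and $\F'$ has resolution $0\to\O(-4)\oplus 2\O(-1)\to 3\O\to\F'\to 0$; the auxiliary proposition \ref{5.2} then shows $\F'$ is semi-stable of slope $0$ with $\O_L(-1)$ as its only possible proper subsheaf of slope $0$ (via the identification with twisted ideal sheaves and lemma 6.7 of \cite{maican}), which reduces the whole destabilisation question to one short diagram chase. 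If you keep your route you must actually produce and dispatch the full list; otherwise adopt the reduction through \ref{5.2}.
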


\begin{proof}
Assume that $\F$ gives a point in $\M(6,1)$ and satisfies the cohomological
conditions from the proposition. The sheaf $\G=\F^{\D}(1)$ gives a point in $\M(6,5)$
and satisfies the dual conditions $\h^0(\G(-1))=2$ and $\h^1(\G)=0$.
We put $m= \h^1(\G \tensor \Om^1(1))$.
The Beilinson free monad yields the resolution
\[
0 \lra 2\O(-2) \stackrel{\eta}{\lra} 3\O(-2) \oplus (m+4)\O(-1) \stackrel{\f}{\lra} m\O(-1) \oplus 5\O \lra \G \lra 0.
\]
\[
\eta = \left[
\ba{c}
0 \\ \psi
\ea
\right].
\]
Here $\f_{12} =0$. As $\G$ has rank zero and maps surjectively onto $\CC=\Coker(\f_{11})$,
we have $m \le 3$. If $m=3$, then $\CC$ has Hilbert polynomial $\PP(t)=3t$, so it is a destabilising
quotient sheaf of $\G$. The cases $m=0$ and $m=1$ can be eliminated using the arguments
from the proof of 3.1.3 \cite{mult_five}. Thus $m=2$.
As in the proof of 3.2.5 \cite{mult_five}, we may assume that $\psi$ is represented by the matrix
\[
\left[
\ba{cccccc}
X & Y & Z & 0 & 0 & 0 \\
0 & 0 & 0 & X & Y & Z
\ea
\right]^\T.
\]
By duality, we obtain a monad for $\F$ of the form
\[
0 \lra 5\O(-2) \oplus 2\O(-1) \lra 6\O(-1) \oplus 3\O \stackrel{\eta^\T}{\lra} 2\O \lra 0,
\]
yielding a resolution
\[
0 \lra 5\O(-2) \oplus 2\O(-1) \lra 2 \Om^1 \oplus 3\O \lra \F \lra 0.
\]
From this we get a resolution
\[
0 \lra 2\O(-3) \oplus 5\O(-2) \oplus 2\O(-1) \stackrel{\f}{\lra} 6\O(-2) \oplus 3\O \lra \F \lra 0
\]
in which $\rank(\f_{12})=5$. We finally arrive at the resolution of $\F$ from the proposition.
The conditions on $\f$ in the proposition follow from the semi-stability of $\F$.
If $\f_{11}$ had linearly dependent entries, then $\F$ would map surjectively onto
a sheaf with Hilbert polynomial $\PP(t)=t-1$. If $\f_{22}$ had linearly dependent maximal minors,
then it would be equivalent to a morphism
represented by a matrix with a zero-row or a zero-submatrix of size $2 \times 1$.
Thus $\F$ would have a destabilising subsheaf with Hilbert polynomial $\PP(t)=2t+2$ or $t+1$.

\medskip

\noi
Conversely, we assume that $\F$ has a resolution as in the proposition and we need to show that
there are no destabilising subsheaves. From the snake lemma we get an extension
\[
0 \lra \F' \lra \F \lra \C_x \lra 0,
\]
in which $\C_x$ is the structure sheaf of the point given by the ideal generated by the entries
of $\f_{11}$ and $\F'$ has resolution of the form
\[
0 \lra \O(-4) \oplus 2\O(-1) \stackrel{\psi}{\lra} 3\O \lra \F' \lra 0
\]
in which $\psi_{12}=\f_{22}$.
According to proposition \ref{5.2} below, $\F'$ is semi-stable and the only possible
subsheaves of $\F'$ of slope zero must be of the form $\O_L(-1)$ for a line $L \subset \P^2$.
It follows that for every subsheaf $\E \subset \F$ we have $\pp(\E) \le 0$ except, possibly,
subsheaves that fit into an extension of the form
\[
0 \lra \O_L(-1) \lra \E \lra \C_x \lra 0.
\]
We have $\E \isom \O_L$ because $\E$ has no zero-dimensional torsion.
If $\F$ had such a subsheaf, then we would get a commutative diagram with exact rows
\[
\xymatrix
{
0 \ar[r] & \O(-1) \ar[r] \ar[d]^-{\b} & \O \ar[r] \ar[d]^-{\a} & \O_L \ar[r] \ar[d] & 0 \\
0 \ar[r] & 2\O(-3) \oplus 2\O(-1) \ar[r]^-{\f} & \O(-2) \oplus 3\O \ar[r] & \F \ar[r] & 0
},
\]
in which $\a$ is injective, because it is injective on global sections, hence $\b$
is also injective. It would follow that $\f$ is equivalent to a morphism represented by a matrix
with only one non-zero entry on the last column. This would violate our hypothesis on $\f$.
\end{proof}

\begin{prop}
\label{5.2}
Let $\F$ be a sheaf with resolution of the form
\[
0 \lra \O(-4) \oplus 2\O(-1) \stackrel{\psi}{\lra} 3\O \lra \F \lra 0,
\]
in which $\psi_{12}$ has linearly independent maximal minors. Then $\F$ is semi-stable,
i.e. it gives a point in $\M(6, 0)$. If the maximal minors of $\psi_{12}$ have no common factor,
then $\F$ is stable. If they have a common linear factor $\ell$, then $\O_L(-1) \subset \F$
is the unique proper subsheaf of slope zero, where $L \subset \P^2$ is the line with equation $\ell =0$.
\end{prop}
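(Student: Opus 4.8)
The plan is to analyse directly the sheaf $\F = \Coker(\psi)$ with $\psi = \left[\begin{smallmatrix} h \\ \psi_{12}\end{smallmatrix}\right]$, where $h \in \SS^4V^*$ and $\psi_{12} \colon 2\O(-1) \to 3\O$ has linearly independent maximal minors $q_1, q_2, q_3 \in \SS^2V^*$. First I would record the Hilbert polynomial: from the resolution $\chi(\F(t)) = 3\binom{t+2}{2} - \binom{t-2}{2} - 2\binom{t+1}{2} = 6t$, so $\F$ has multiplicity $6$, slope $0$, and any destabilising subsheaf $\E \subset \F$ has $\pp(\E) > 0$, equivalently $\chi(\E) \ge 1$, with $1 \le \rank(\E) \le 5$ (pure one-dimensional, so I may take $\E$ semi-stable). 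Since $\h^0(\F(-1)) = 0$ — which follows from the resolution because $\Hom(3\O, \O(-1)) = 0$ and $\Hom(\O(-4)\oplus 2\O(-1), \O(-1))$ injects into $\Hom(3\O(-1)?,\dots)$, more carefully from the long exact sequence $\H^0(\F(-1))$ injects into $\H^1$ of $\O(-5)\oplus 2\O(-2)$ which vanishes — every semi-stable subsheaf $\E$ also has $\h^0(\E(-1)) = 0$.

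Next I would enumerate, exactly as in the proofs of \ref{3.1} and \ref{4.1}, the possible resolutions of such an $\E$: it gives a point in some $\M(r,\chi)$ with $1 \le r \le 5$, $\chi \ge 1$, $\h^0(\E(-1)) = 0$, and $\h^0(\E) \le 3$ (since $\h^0(\F) = 3$, as $\h^1(\F) = 0$ here). Running through $\M(r,\chi)$ for these values and lifting each candidate resolution into a commutative diagram
\[
\xymatrix{
0 \ar[r] & \B' \ar[r] \ar[d]^-{\b} & \A' \ar[r] \ar[d]^-{\a} & \E \ar[r] \ar[d] & 0 \\
0 \ar[r] & \O(-4) \oplus 2\O(-1) \ar[r]^-{\psi} & 3\O \ar[r] & \F \ar[r] & 0
}
\]
with $\a$ injective on global sections, I would derive in each case a contradiction with the hypothesis that $\psi_{12}$ has linearly independent maximal minors. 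The key mechanism: injectivity of $\a$ on $\H^0$ forces $\a$ (a matrix of scalars $3\O \to $? no — $\a \colon \A' \to 3\O$) to have, after the $\Aut$-action, a standard shape; composing with $\psi$ and comparing shows that $\psi_{12}$ would be equivalent to a morphism with a zero entry, a zero row, or a $2\times 1$ zero block, i.e. its maximal minors would be linearly dependent. The presence of the high-degree summand $\O(-4)$ restricts which $\B'$ can map nontrivially: $\Hom(\O(-j), \O(-4)) = 0$ for $j < 4$, so the first row of $\b$ is controlled and typically $\Ker(\b) \ne 0$ unless forced, ruling out several cases outright.

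For the stability part: if the $q_i$ have no common factor, then $\psi_{12}$ is semi-stable as a Kronecker $V$-module and, by \cite{modules-alternatives}, $\Coker(\psi_{12}) \isom \I_W(2)$ for a length-$3$ subscheme $W$; I would not actually need this — rather, suppose $\E \subset \F$ with $\pp(\E) = 0$, i.e. $\chi(\E) = 0$; then $\E$ is itself semi-stable of slope $0$, $\h^0(\E(-1)) = 0$, and one runs the same diagram chase to show that a common factor of the $q_i$ must appear, contradiction, so $\F$ is stable. If the $q_i$ share a common linear factor $\ell$, write $q_i = \ell\, \ell_i'$; then $\psi_{12}$ factors as $2\O(-1) \xrightarrow{\ell'} 2\O(-1)?$ — more precisely the three maximal minors being $\ell \cdot(\text{linear})$ produces a map $\F \to \O_L$ or dually a subsheaf $\O_L(-1) \hookrightarrow \F$; I would construct it explicitly from the factorisation, check it has slope $0$, and use the enumeration above (now with $\chi(\E) = 0$ allowed) to see it is the unique such subsheaf — any slope-zero subsheaf must, by the diagram chase, be of this form and pinned down by $L$.

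The main obstacle will be the bookkeeping in the enumeration step: there are several resolutions in $\M(r,\chi)$ for $1 \le r \le 5$, $\chi \in \{1,2\}$ (and $\chi = 0$ for the stability refinement), and for each one must determine the possible normal forms of $\a$ under $\Aut(3\O) = \GL_3(\C)$ acting on scalar matrices together with $\Aut(\A')$, then back out the constraint on $\psi_{12}$. The $\O(-4)$ summand is actually a help, not a hindrance: because $\Hom(\O(-j),\O(-4)) = 0$ for $j \le 3$ and $\Hom(3\O, \O(-4)) = 0$, the top-left block of every comparison map is forced, which collapses most cases quickly. I expect the genuinely delicate case to be resolution (5)-type objects, $0 \to 3\O(-2) \to 2\O(-1)\oplus\O \to \E \to 0$ and its relatives, where $\a(1)$ must also be tracked; the argument there will mirror the corresponding case in the proof of \ref{4.1}. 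Once the enumeration is complete, semi-stability and the two refinements follow formally.
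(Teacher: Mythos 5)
Your strategy is genuinely different from the paper's, and as written it has real gaps beyond bookkeeping. The paper does not run a case-by-case diagram chase at all: in the no-common-factor case it identifies $\F$ concretely as $\J_Z(2)$, where $Z$ is the length-$3$ scheme with $\Coker(\psi_{12}) \isom \I_Z(2)$ (not contained in a line, by the analogue of 2.3.4(i) of \cite{mult_five}) and $\J_Z \subset \O_C$ is its ideal sheaf in the sextic $C$ given by $\det\psi = 0$; it then applies lemma 6.7 of \cite{maican} to an arbitrary subsheaf $\I \subset \J_Z$, sandwiching it between a finite-length correction and an ideal sheaf $\J$ with $\O_C/\J \isom \O_{C'}$ for a curve $C' \subset C$, and one Hilbert-polynomial computation shows $\pp(\I) < \pp(\J_Z)$ except when $\J = \Ker(\O_C \to \O_L)$, which is excluded because $Z$ does not lie on a line. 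In the common-factor case it exhibits $\F$ as an extension of $\O_{C'}(1)$ ($C'$ a quintic) by $\O_L(-1)$, so the Jordan--H\"older factors are visible and uniqueness of the slope-zero subsheaf follows from $\h^0(\F(-1))=0$. You explicitly decline to use the identification $\Coker(\psi_{12}) \isom \I_W(2)$, but that identification is exactly what makes the paper's argument short.

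The concrete problems with your route are these. First, your entire proof is the deferred enumeration, and the list is longer than you anticipate: the constraint $\h^0(\E \tensor \Om^1(1))=0$ that prunes the lists in \ref{3.1} and \ref{4.1} is unavailable here, since from the resolution one computes $\h^0(\F \tensor \Om^1(1)) = 2$, so candidates such as $0 \to \O(-3)\oplus\O(-1) \to 2\O \to \E \to 0$ cannot be discarded a priori. Second, your key mechanism --- injectivity of $\a$ on global sections forcing a normal form --- gives nothing for slope-zero subsheaves with $\h^0(\E)=0$ (for instance the generic point of $\M(2,0)$, with resolution $0 \to 2\O(-2) \to 2\O(-1) \to \E \to 0$), yet precisely these must be excluded to establish stability and the uniqueness of $\O_L(-1)$; you offer no replacement, whereas the paper's Jordan--H\"older bookkeeping handles them for free. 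Third, and minor but symptomatic, $\h^1(\F)=3$, not $0$ (one has $\h^0(\F)=3$ and $\chi(\F)=0$). None of this is necessarily fatal to the strategy, but until the enumeration is actually carried out and the $\h^0=0$ subsheaves are dealt with, the proposal is a plan rather than a proof.
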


\begin{proof}
Assume that the maximal minors of $\psi_{12}$ have no common factor.
By analogy with 2.3.4(i) \cite{mult_five}, $\F$ is isomorphic to $\J_Z(2)$,
where $Z \subset \P^2$ is a zero-dimensional scheme of length $3$ not contained in a line,
contained in a sextic curve $C$, and $\J_Z \subset \O_C$ is its ideal sheaf.
Let $\I$ be a subsheaf of $\J_Z$. According to 6.7 \cite{maican}, there is a sheaf $\J$ such that
$\I \subset \J \subset \O_C$, $\J/\I$ is supported on finitely many points and $\O_C/\J$ is isomorphic
to the structure sheaf of a curve $C'$ contained in $C$.
Excluding the uninteresting case when $\J$ has multiplicity $6$, we have the following possibilities
for the Hilbert polynomial $\PP(t)$ of $\J$, depending on the degree of $C'$:
$5t-10$, $4t-10$, $3t-9$, $2t-7$, $t-4$.
From this we see that $\pp(\I) < \pp(\J_Z)$ except in the case when $\I = \J$ and $\PP_{\J}(t) = 5t-10$,
i.e. when $\J$ is the kernel of a surjective morphism $\O_C \to \O_L$ for a line $L \subset C$.
This situation is ruled out if we take into account that $Z$ may not be a subscheme of a line.

Assume now that the maximal minors of $\psi_{12}$ have a common linear factor $\ell$.
As at 2.3.4(ii) \cite{mult_five}, we have an extension
\[
0 \lra \O_L(-1) \lra \F \lra \O_C(1) \lra 0,
\]
where $L$ is the line with equation $\ell=0$ and $C$ is a quintic curve.
Thus $\F$ is semi-stable and $\O_L(-1)$, $\O_C(1)$ are its stable factors.
The latter cannot be a subsheaf of $\F$ because $\H^0(\F(-1))$ vanishes.
\end{proof}

\noi
Let $\W_3 = \Hom(2\O(-3) \oplus 2\O(-1), \O(-2) \oplus 3\O)$ and let $W_3 \subset \W_3$
be the set of morphisms $\f$ from proposition \ref{5.1}.
Let
\[
G_3= (\Aut(2\O(-3) \oplus 2\O(-1)) \times \Aut(\O(-2) \oplus 3\O))/\C^*
\]
be the natural group acting by conjugation on $\W_3$.
Let $X_3 \subset \M(6,1)$ be the set of isomorphism classes of sheaves of the form
$\Coker(\f)$, $\f \in W_3$.
$X_3$ is a locally closed subset which we equip with the canonical induced reduced structure.

\begin{prop}
\label{5.3}
The generic sheaves in $X_3$ have the form $\O_C(2)(-P_1 -P_2 -P_3 +P_4)$,
where $C \subset \P^2$ is a smooth sextic curve, $P_i$, $1 \le i \le 4$, are distinct points on $C$
and $P_1, P_2, P_3$ are non-colinear.
\end{prop}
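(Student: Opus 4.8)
The plan is to take a sheaf $\F$ representing a generic point of $X_3$ and identify it using two facts already established: the extension constructed in the proof of Proposition~\ref{5.1} and the description of its sub-object in Proposition~\ref{5.2}. Since $X_3$ is irreducible it suffices to produce a dense open subset of $W_3$ on which $\Coker(\f)$ has the stated form. So let $\f\in W_3$ be generic, and write $a_1,a_2$ for the two linearly independent entries of $\f_{11}$; they cut out a reduced point $x\subset\P^2$. The proof of Proposition~\ref{5.1} yields an extension
\[
0\lra\F'\lra\F\lra\C_x\lra 0,
\]
where $\F'$ has a resolution $0\to\O(-4)\oplus 2\O(-1)\to 3\O\to\F'\to 0$ whose linear block is $\f_{22}$, so $\chi(\F')=0$. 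Now ``the maximal minors of $\f_{22}$ have no common factor'' is a non-empty open condition on $W_3$, since a generic $3\times 2$ matrix of linear forms has coprime (hence a fortiori linearly independent) quadratic minors. For $\f$ in the corresponding dense open set, Proposition~\ref{5.2} applies and gives $\F'\isom\J_Z(2)$, where $Z\subset\P^2$ is a length-three scheme not lying on a line, contained in the sextic $C$, and $\J_Z\subset\O_C$ is its ideal sheaf; here $C=\{\det(\f)=0\}$ is the Fitting support of $\F$.

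Next I would check that for generic $\f$ the curve $C$ is \emph{smooth}, that $Z$ is a union of three distinct non-colinear points $P_1,P_2,P_3$, and that $x\notin Z$. The second statement is immediate once the minors of $\f_{22}$ are coprime, and $x\notin Z$ is an open condition on $(\f_{11},\f_{22})$ alone, so the substantive point is smoothness of $C$. Expanding $\det(\f)$ along its first row $(a_1,a_2,0,0)$ gives $\det(\f)=a_1 Q_2-a_2 Q_1$, where $Q_j=\det(v_j\mid\f_{22})$ and $v_1,v_2$ are the columns of $\f_{21}$. Cofactor expansion of $Q_j$ along the column $v_j$ exhibits $Q_j$ as the ``dot product'' of $v_j$ with the vector of $2\times 2$ minors of $\f_{22}$; since those minors generate the homogeneous ideal of $Z$ (Hilbert--Burch), $Q_j$ runs through all of $H^0(\I_Z(5))$ as $v_j$ runs through triples of cubics. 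Hence, for fixed generic $x$ and $Z$ (so that $P_1,P_2,P_3,x$ are four points in general position), the sextics $C=a_1Q_2-a_2Q_1$ run, as $\f_{21}$ varies, over the image of the linear map $(Q_1,Q_2)\mapsto a_1Q_2-a_2Q_1$ on $H^0(\I_Z(5))^{\oplus 2}$; its kernel is $\{(a_1R,a_2R):R\in H^0(\I_Z(4))\}$, of dimension $12$, so its image has dimension $2\cdot 18-12=24$, which equals the dimension of the space of sextic forms vanishing on $Z\cup\{x\}$. Thus $C$ ranges over the complete linear system of sextics through the four general points $Z\cup\{x\}$, whose base locus is exactly those four points, and Bertini (applied both away from and at the base points) shows a generic member is smooth. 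Therefore, on a dense open subset of $W_3$, $C$ is a smooth sextic, $Z=\{P_1,P_2,P_3\}$ with the $P_i$ distinct and non-colinear, and $x\notin Z$.

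It then remains to assemble the pieces. On the smooth curve $C$ we have $\J_Z=\O_C(-P_1-P_2-P_3)$, hence $\F'=\O_C(2)(-P_1-P_2-P_3)$, a line bundle of degree $9$ on the genus-ten curve $C$ (as it must be, since $\chi(\F')=0$). Being pure of dimension one with Fitting support the integral curve $C$ of degree equal to its multiplicity, $\F$ is a torsion-free rank-one $\O_C$-module, hence a line bundle because $C$ is smooth; in particular $\F$ is stable. The surjection $\F\to\C_x$ forces $x$ to lie on $\mathrm{supp}(\F)=C$; put $P_4=x$, which is distinct from $P_1,P_2,P_3$. Then $\F'=\ker(\F\to\C_{P_4})=\F(-P_4)$ — away from $P_4$ this is an isomorphism, and at $P_4$ it is the twist of the invertible sheaf $\F$ down by a point — so
\[
\F\isom\F'(P_4)=\O_C(2)(-P_1-P_2-P_3+P_4),
\]
with $C$ a smooth sextic and $P_1,\dots,P_4$ distinct points on it, the first three non-colinear. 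Since this holds on a dense open subset of the irreducible variety $X_3$, it holds for the generic sheaf in $X_3$.

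The step I expect to be the main obstacle is proving smoothness of the generic $C$: one must recognise the family $\{\det(\f)=0\}$, for fixed $x$ and $\f_{22}$, as the complete linear system of sextics through the four points $Z\cup\{x\}$ and then invoke Bertini at and away from this finite base locus, while also checking that ``$\f_{22}$ has coprime minors'', ``$C$ smooth'' and ``$x\notin Z$'' are each non-empty — hence dense — open conditions on the irreducible variety $W_3$. Everything else follows formally from Propositions~\ref{5.1} and~\ref{5.2} and the elementary behaviour of a line bundle on a smooth curve under passage to a subsheaf with skyscraper quotient.
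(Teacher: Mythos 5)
Your proposal is correct and follows essentially the same route as the paper: Propositions~\ref{5.1} and~\ref{5.2} exhibit the generic $\F$ as an extension of $\C_x$ by $\J_Z(2)$ with $Z$ a length-three scheme off a line, and one then specializes to $C$ smooth, $Z$ reduced and $x\notin Z$ to read off $\F\isom\O_C(2)(-P_1-P_2-P_3+P_4)$. The only difference is that you explicitly verify (via the expansion $\det(\f)=a_1Q_2-a_2Q_1$, the Hilbert--Burch description of $H^0(\I_Z(5))$, the dimension count and Bertini) that smoothness of $C$ and the other genericity conditions cut out a dense open subset of $W_3$ — a point the paper's proof asserts without argument.
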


\begin{proof}
According to propositions \ref{5.1} and \ref{5.2}, the generic sheaves in $X_3$ are precisely the non-split extension
sheaves of the form
\[
0 \lra \J_Z(2) \lra \F \lra \C_x \lra 0,
\]
where $Z \subset \P^2$ is a zero-dimensional scheme of length $3$ not contained in a line,
contained in a sextic curve $C$, and $\J_Z \subset \O_C$ is its ideal sheaf.
Take $C$ to be smooth, take $Z$ to be the union of three distinct points different from $x$.
Then $\F \isom \O_C(2)(-P_1-P_2-P_3+x)$. Conversely, it can be easily seen that any such
sheaf is in $X_3$.
\end{proof}

\begin{prop}
\label{5.4}
There exists a geometric quotient $W_3/G_3$ and it is a proper open subset
inside a fibre bundle over $\P^2 \times \N(3,2,3)$ with fibre $\P^{23}$.
$W_3/G_3$ is isomorphic to $X_3$.
\end{prop}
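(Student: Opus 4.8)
The plan is to follow the template established in Propositions \ref{2.2} and \ref{5.4}-analogues from \cite{mult_five}, constructing the quotient of $W_3$ modulo $G_3$ in stages by peeling off subgroups. First I would record the block structure: writing $\f = \left[\begin{smallmatrix} \f_{11} & \f_{12} \\ \f_{21} & \f_{22} \end{smallmatrix}\right]$ with $\f_{11}\colon 2\O(-3)\to\O(-2)$, $\f_{12}\colon 2\O(-1)\to\O(-2)$, $\f_{21}\colon 2\O(-3)\to 3\O$, $\f_{22}\colon 2\O(-1)\to 3\O$. Note $\f_{12}=0$ automatically since $\Hom(\O(-1),\O(-2))=0$. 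The surviving data is thus $(\f_{11},\f_{21},\f_{22})$, where $\f_{11}$ has linearly independent entries (a point of an open subset of $\P(\Hom(2\O(-3),\O(-2)))\cong\P(V^*\otimes\C^2)$, modulo scalars the Kronecker-type module structure) and $\f_{22}$ has linearly independent maximal minors, i.e.\ is a semi-stable Kronecker module $2\O(-1)\to 3\O$, giving a point of $\N(3,2,3)$. I expect the map $\f\mapsto(\f_{11},\f_{22})$ to induce, after the reductive part of the group acts, a map to $\P^2\times\N(3,2,3)$: the factor $\P^2$ arises because $\f_{11}$ with linearly independent entries, up to the $\GL_2$ acting on $2\O(-3)$ and the scalar on $\O(-2)$, is the datum of a two-dimensional subspace of $V^*$, equivalently a point of $\P(V)=\P^2$; the factor $\N(3,2,3)$ comes from $\f_{22}$.

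Next I would organize the group. Decompose $G_3$ into a reductive part $G_{3,\mathrm{red}}$ (block-diagonal automorphisms: $\GL_2$ on $2\O(-3)$, $\C^*$ on $\O(-2)$, $\GL_3$ on $3\O$, $\GL_2$ on $2\O(-1)$, modulo the diagonal $\C^*$) and a unipotent part consisting of the "lower-triangular'' homomorphisms $\O(-2)\to 3\O$ (these are degree-two, contributing $\Hom(\O(-2),3\O)=\SS^2V^*\otimes\C^3$, dimension $18$) together with $2\O(-3)\to 2\O(-1)$ (quadratic, $\SS^2V^*\otimes\C^{2\times2}$... actually these shift $\f_{21}$ and $\f_{22}$) and $\O(-2)\to\O(-1)$ and $2\O(-3)\to\O(-2)$. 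The strategy: the unipotent subgroup acting on the "$\f_{21}$'' slot lets us quotient first, presenting $W_3$ (or rather its closure analogue $W_3'$ without the injectivity/stability conditions) as an open subset of a vector bundle over the product of the spaces parametrizing $\f_{11}$ and $\f_{22}$; then the action of $\C^*$ on fibres gives the projectivization yielding fibre $\P^{23}$; then the remaining reductive group descends the base to $\P^2\times\N(3,2,3)$, invoking \cite{huybrechts} lemmas 4.2.14--4.2.15 exactly as in the proof of \ref{4.3}. Computing the fibre dimension: after fixing $\f_{11}$ and $\f_{22}$, the remaining datum is $\f_{21}\in\Hom(2\O(-3),3\O)=\SS^3V^*\otimes\C^{2\times3}$ of dimension $10\cdot6=60$, from which one subtracts the unipotent directions $\Hom(\O(-2),3\O)=6\cdot3=18$ and $\Hom(2\O(-3),2\O(-1))\cdot$ (composed with $\f_{22}$) $=2\cdot2\cdot? $; one must check the count lands at $24$, so $\P^{23}$. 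A dimension sanity check: $\dim X_3 = 37-6 = 31$, and $\dim(\P^2\times\N(3,2,3)) = 2 + \dim\N(3,2,3)$; since $\N(3,2,3)$ has dimension $3\cdot2\cdot3 - (2^2-1) - (3^2-1) = 18-3-8 = 7$, the base has dimension $9$, and $9+23 = 32 = 31+1$, consistent with $W_3/G_3$ being a \emph{proper} open subset (matching the assertion, since the bundle total space properly contains the quotient).

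Then for the isomorphism $W_3/G_3\cong X_3$ I would argue, as in \ref{3.2} and \ref{4.4}, that the canonical map $\r\colon W_3\to X_3$ sending $\f$ to $\Coker(\f)$ is a categorical quotient map: one must show resolution \ref{5.1} can be recovered canonically from the Beilinson spectral sequence of $\F$ (or dually of $\G=\F^\D(1)$), which is exactly the computation carried out inside the proof of \ref{5.1} itself (the Beilinson monad for $\G$, the identification $\Ker(\f_1)\cong 2\O(-2)$ and $\Coker(\f_3)$, normalizing $\psi$ to the stated matrix by a choice that is unique up to the group, then dualizing). Since that derivation is functorial in $\F$ up to the action of $G_3$, \ref{3.1.6}\cite{drezet-maican} applies and $\r$ is a categorical quotient; by uniqueness of categorical quotients, $W_3/G_3\cong X_3$, and since $\r$ has $G_3$-orbits as fibres and $X_3$ is smooth (being locally closed in the smooth $\M(6,1)$, or appealing to \cite{popov-vinberg} theorem 4.2 after noting normality via \cite{mumford}), it is a geometric quotient.

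The main obstacle, I expect, is the bookkeeping in the staged quotient construction: verifying that the unipotent subgroups act freely with the claimed quotients (so that \cite{huybrechts} 4.2.14/4.2.15 apply), pinning down precisely which homomorphisms are "lower triangular'' once the shifts by $\f_{22}$ are taken into account, and confirming the fibre is genuinely $\P^{23}$ rather than some other dimension. This is entirely parallel to \ref{4.3} but the degree-$3$ entries make the linear algebra heavier; a secondary subtlety is that $\f_{11}$ with "linearly independent entries'' gives a $\GL_2\times\C^*$-quotient that is $\P^2$ only after one checks the stabilizer structure matches what $\P(E)$-type arguments require. None of this is conceptually new relative to the earlier propositions, so the proof will read "the argument is nearly identical to \ref{4.3}/3.2.4 \cite{mult_five}'' with the block data spelled out.
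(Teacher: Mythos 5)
Your plan is the paper's plan: the quotient is constructed exactly as in \ref{4.3} (following 2.2.2 of \cite{mult_five}) by removing the $G_3$-invariant sub-bundle $\Si$ of morphisms with $\f_{21}=\f_{22}u+v\f_{11}$, projectivising the quotient bundle, and descending the base to $\P^2\times\N(3,2,3)$; the isomorphism with $X_3$ is then obtained by recovering resolution \ref{5.1} naturally from the Beilinson spectral sequence, so that $W_3\to X_3$ is a categorical quotient and 3.1.6 of \cite{drezet-maican} applies.

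The one genuine gap is the verification that the fibre is $\P^{23}$, which you leave open after a computation that, as written, does not close. Your naive subtraction $60-18-24$ gives $18$, not $24$: the map $(u,v)\mapsto\f_{22}u+v\f_{11}$ from $\Hom(2\O(-3),2\O(-1))\oplus\Hom(\O(-2),3\O)$, of dimension $24+18=42$, into the $60$-dimensional space $\Hom(2\O(-3),3\O)$ is not injective. If $\f_{22}u=-v\f_{11}$, then $u$ kills $\Ker(\f_{11})\isom\O(-4)$ because $\f_{22}$ is injective, so $u$ factors through $\Im(\f_{11})=\I_x(-2)\subset\O(-2)$ and extends to $u=w\f_{11}$ with $w\in\Hom(\O(-2),2\O(-1))$; then $(\f_{22}w+v)\f_{11}=0$, so $\f_{22}w+v$ vanishes on $\I_x(-2)$, factors through $\C_x$, and is therefore zero. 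The kernel is thus $\{(w\f_{11},-\f_{22}w)\}\isom\Hom(\O(-2),2\O(-1))$, of dimension $6$, so $\Si$ has fibre dimension $42-6=36$ and $Q$ has rank $60-36=24$, giving $\P^{23}$. Relatedly, your dimension ``sanity check'' contains two errors that cancel your chance of catching a mistake: $\dim\N(3,2,3)=3\cdot2\cdot3-2^2-3^2+1=6$, not $7$ (the group acting is $(\GL(2)\times\GL(3))/\C^*$, of dimension $12$), so the base has dimension $8$ and the bundle has dimension $8+23=31=\dim X_3$, exactly as it must; a nonempty open subset of the irreducible variety $\P(Q)$ has the \emph{same} dimension as $\P(Q)$ (``proper'' only means the inclusion is strict), so the equality $32=31+1$ you offer as confirmation would in fact refute the proposition. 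With these repairs your argument coincides with the paper's.
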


\begin{proof}
The proof for the first statement is identical to the proof of 2.2.2 \cite{mult_five}.
Let $W_3'$ be the locally closed subset of $\W_3$ given by the following conditions:
$\f_{12}=0$, $\f_{11}$ has linearly independent entries, $\f_{22}$ has linearly independent
maximal minors. Let $\Si \subset W_3'$ be the $G_3$-invariant subset given by the condition
\[
\f_{21}= \f_{22} u + v \f_{11}, \quad u \in \Hom(2\O(-3), 2\O(-1)), \quad v \in \Hom(\O(-2), 3\O).
\]
As at loc.cit., we can construct a vector bundle $Q$ over $\P^2 \times \N(3,2,3)$ of rank $24$,
such that $\P(Q)$ is a geometric quotient of $W_3' \setminus \Si$ modulo $G_3$.
Then $W_3/G_3$ is a proper open subset of $\P(Q)$.

Let $\F$ give a point in $X_3$ and let $\G= \F^\D(1)$. The Beilinson tableau (2.2.3) \cite{drezet-maican}
for $\G$ takes the form
\[
\xymatrix
{
3\O(-2) \ar[r]^-{\psi_1} & 2\O(-1) & 0 \\
2\O(-2) \ar[r]^-{\psi_3} & 6\O(-1) \ar[r]^-{\psi_4} & 5\O
}.
\]
The exact sequence (2.2.5) \cite{drezet-maican} for this situation reads:
\[
0 \lra \Ker(\psi_1) \stackrel{\psi_5}{\lra} \Coker(\psi_4) \lra \G \lra \Coker(\psi_1) \lra 0.
\]
We see from the above that $\Coker(\psi_4)$ has no zero-dimensional torsion
and that there are no non-zero morphisms $\O_L(1) \to \Coker(\psi_4)$ for any line $L \subset \P^2$.
This allows us to deduce, as at \cite{mult_five}, 3.1.3 and 3.2.5, that $\psi_3$ is equivalent
to the morphism represented by the matrix
\[
\left[
\ba{cccccc}
X & Y & Z & 0 & 0 & 0 \\
0 & 0 & 0 & X & Y & Z
\ea
\right]^\T,
\]
i.e. that $\Coker(\psi_3) \isom 2\Om^1(1)$. The Beilinson tableau for $\F$ has the form
\[
\xymatrix
{
5\O(-2) \ar[r]^-{\f_1} & 6\O(-1) \ar[r]^-{\f_2} & 2\O \\
0 & 2\O(-1) \ar[r]^-{\f_4} & 3\O
}.
\]
The morphism $\f_2$ is dual to $\psi_3$, hence $\Ker(\f_2) \isom 2\Om^1$.
Write $\CC= \Ker(\f_2)/\Im(\f_1)$. The exact sequence
\[
0 \lra \Ker(\f_1) \lra 5\O(-2) \lra 2\Om^1 \lra \CC \lra 0
\]
yields the resolution
\[
0 \lra \Ker(\f_1) \lra 2\O(-3) \oplus 5\O(-2) \stackrel{\a}{\lra} 6\O(-2) \lra \CC \lra 0.
\]
$\CC$ has rank zero because it is a quotient sheaf of $\F$.
Thus $\rank(\a_{12}) \ge 4$. If $\rank(\a_{12})=4$, then $\CC$ would map surjectively
onto the cokernel $\CC'$ of a morphism $2\O(-3) \to 2\O(-2)$.
$\CC'$ would then be a destabilising quotient sheaf of $\F$.
Thus $\rank(\a_{12})=5$ and we arrive at an exact sequence
\[
0 \lra \Ker(\f_1) \lra 2\O(-3) \lra \O(-2) \lra \CC \lra 0.
\]
Using the semi-stability hypothesis on $\F$ it is easy to see that $\CC$ is isomorphic
to the structure sheaf of a point and that $\Ker(\f_1)$ is isomorphic to $\O(-4)$.
The exact sequence (2.2.5) \cite{drezet-maican} takes the form
\[
0 \lra \Ker(\f_1) \stackrel{\f_5}{\lra} \Coker(\f_4) \lra \F \lra \CC \lra 0.
\]
We apply the horseshoe lemma to the extension
\[
0 \lra \Coker(\f_5) \lra \F \lra \CC \lra 0,
\]
to the above resolution of $\CC$ and to the resolution
\[
0 \lra \O(-4) \oplus 2\O(-1) \lra 3\O \lra \Coker(\f_5) \lra 0.
\]
We obtain the exact sequence
\[
0 \lra \O(-4) \lra \O(-4) \oplus 2\O(-3) \oplus 2\O(-1) \lra \O(-2) \oplus 3\O \lra \F \lra 0.
\]
The map $\O(-4) \to \O(-4)$ is non-zero because $\h^1(\F)=2$.
Canceling $\O(-4)$ yields resolution \ref{5.1}.
Thus, for a sheaf $\F$ giving a point in $X_3$ we have obtained resolution \ref{5.1}
in a natural manner from the Beilinson spectral sequence.
As at 3.1.6 \cite{drezet-maican}, we conclude that the canonical bijective morphism
$W_3/G_3 \to X_3$ is an isomorphism.
\end{proof}

\begin{prop}
\label{5.5}
$X_3$ lies in the closure of $X_2$.
\end{prop}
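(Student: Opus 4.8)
The plan is to reduce to a generic point and then exhibit an explicit degeneration. Since $X_3$ is irreducible and $\overline{X_2}$ is closed in the projective variety $\M(6,1)$, it suffices to show that one sheaf $\F_0$ lying in a dense open subset of $X_3$ also lies in $\overline{X_2}$. By Proposition \ref{5.3} one may take $\F_0\isom\O_C(2)(-P_1-P_2-P_3+P_4)$ with $C$ a smooth sextic and $P_1,P_2,P_3$ non-colinear; equivalently, by Propositions \ref{5.1} and \ref{5.2}, $\F_0$ is the non-split extension $0\to\J_Z(2)\to\F_0\to\C_x\to 0$ with $Z=\{P_1,P_2,P_3\}$ and $x=P_4$. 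One then wants a flat one-parameter family $(\F_t)_{t\in\C}$ with $\F_t$ giving a point of $X_2$ for $t\neq 0$ and with flat limit $\F_0$ in $\M(6,1)$.

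One feature must be anticipated, since it dictates the shape of the degeneration: a sheaf admitting a resolution of the type of Proposition \ref{4.1} has $\h^1=1$. Indeed the long exact cohomology sequence of that resolution shows that among the summands of the source $\O(-3)\oplus 2\O(-2)\oplus\O(-1)$ only $\O(-3)$ feeds $\H^1$ of the cokernel, and it contributes $\h^2(\O(-3))=1$. As $\h^1(\F_0)=2$, the locally free resolutions of the $\F_t$ cannot be fitted into a single family over $\C$; equivalently, the lift of the curve $t\mapsto[\F_t]$ to $W_2$ does not extend across $t=0$ even after the action of $G_2$, so $[\F_0]$ is a genuine boundary point of $X_2=W_2/G_2$ and the limiting sheaf has to be produced by a Langton-type semistable reduction rather than as the cokernel of a limit of morphisms.

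Concretely, one natural route is to build the family from the extension side: deform $0\to\J_Z(2)\to\F_0\to\C_x\to 0$ into a family $0\to\mathcal G_t\to\F_t\to\C_{x_t}\to 0$ with $\mathcal G_0=\J_Z(2)$, moving the subsheaf inside $\M(6,0)$ (together with $x_t$ and the extension class) so that for $t\neq 0$ the sheaf $\F_t$ satisfies the conditions of Proposition \ref{4.1}: the linear independence of $\ell_1,\ell_2$, the non-vanishing of $\ell_{11}\ell_{22}-\ell_{12}\ell_{21}$, and the linear independence of the two cubic minors modulo it. An equivalent route is to write down a one-parameter family of morphisms $\O(-3)\oplus 2\O(-2)\oplus\O(-1)\to 2\O(-1)\oplus 2\O$ with entries acquiring a controlled pole at $t=0$ and to run Langton's procedure on the associated sheafy family over $\C\times\P^2$ to read off the central fibre. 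In either case one identifies the limiting sheaf with $\F_0$ by means of the $X_3$-resolution of Proposition \ref{5.1}, and one verifies the endpoints. For this last step upper semicontinuity along the family gives the inequalities $\h^1(\F_0)\ge\h^1(\F_t)$, $\h^0(\F_0(-1))\ge\h^0(\F_t(-1))$ and $\h^0(\F_0\tensor\Om^1(1))\ge\h^0(\F_t\tensor\Om^1(1))$ for free, so what remains is to exclude the undesired jumps: that $\F_t\notin X_0\cup X_1$ for $t\neq 0$, and that the flat limit lies in $X_3$ rather than in $X_4$ or $X_5$.

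The step I expect to be the main obstacle is precisely the explicit construction of the connecting family together with the flat-limit computation: the natural resolutions of the $X_2$- and $X_3$-sheaves have incompatible shapes and do not interpolate, so one must deform the sheaves themselves rather than a morphism, and one has to keep tight control of the semistable reduction in order to be sure that the limit is the generic sheaf of $X_3$ and not a more degenerate one. Should the direct argument prove awkward, a natural alternative is to carry it out on the dual side in $\M(6,5)$: degenerate morphisms whose cokernels give points of $X_2^{\D}$ and recognise the flat limit as $\F_0^{\D}(1)\in X_3^{\D}$ via the dual resolutions recorded in the introduction.
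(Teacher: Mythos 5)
Your proposal is a strategy outline rather than a proof: the one step that would actually establish the result --- producing a family $(\F_t)$ of sheaves in $X_2$ whose flat limit is a given generic sheaf of $X_3$, or equivalently controlling a Langton-type semistable reduction --- is exactly the step you defer and flag as the main obstacle. Nothing in the proposal constructs such a family, and the difficulty is real: the paper gives no geometric (line-bundle-on-curve) model for the sheaves of $X_2$, only the resolution of Proposition \ref{4.1}, so the ``deform the extension class and the subsheaf inside $\M(6,0)$'' route has no concrete starting point, and the ``morphisms with a controlled pole at $t=0$'' route requires an explicit matrix computation of the kind the paper does carry out for the (different) inclusion $X_4\subset\overline{X}_2$ in Proposition \ref{5.8}. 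There is also a small logical slip: exhibiting \emph{one} sheaf of a dense open subset of $X_3$ inside the closed set $\overline{X}_2$ does not give $X_3\subset\overline{X}_2$; you need a dense subset of $X_3$ to lie in $\overline{X}_2$, so the construction must be uniform in the parameters of the generic sheaf.

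The paper avoids all of this by not degenerating sheaves at all. Both $X_2$ and $X_3$ lie in the open set $U\subset\M(6,1)$ cut out by $\h^0(\F(-1))=0$ and $\h^1(\F(1))=0$, and over $U$ every sheaf is the cohomology of a Beilinson monad of the \emph{single uniform shape} $0\to 11\O(-1)\stackrel{A}{\to}16\O\stackrel{B}{\to}5\O(1)\to 0$ for $\F(-1)$; this is precisely the interpolating object whose absence you correctly identified at the level of the length-one resolutions. One then shows the space $M$ of such monads is smooth and that $(A,B)\mapsto B$ has surjective differential, so the locus where $\h^1(\F)=2$ (a degeneracy condition on the monad data) lies in the closure of the locus where $\h^1(\F)=1$, whence $X_3\subset\overline{X}_2$. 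Your cohomological bookkeeping (semicontinuity, $\h^1=1$ on $X_2$, $\h^1=2$ on $X_3$) is correct and consistent with this, but as it stands the proposal does not close the gap it identifies.
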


\begin{proof}
The argument can be found at 2.1.6 \cite{mult_five}, or at 3.2.3 \cite{drezet-maican}.
Using the Beilinson monad for $\F(-1)$ we see that the open subset $U \subset \M(6,1)$
given by the conditions $\h^0(\F(-1))=0$ and $\h^1(\F(1))=0$
is parametrised by an open subset $M$ inside the space of monads of the form
\[
0 \lra 11\O(-1) \stackrel{A}{\lra} 16\O \stackrel{B}{\lra} 5\O(1) \lra 0.
\]
Consider the map $\Phi \colon M \to \Hom(16\O, 5\O(1))$ defined by $\Phi(A,B)=B$.
Using the vanishing of $\H^1(\F(1))$ for an arbitrary sheaf $\F$ giving a point in $U$,
we can prove that $M$ is smooth and that $\Phi$ has surjective differential at every point.
This further leads to the conclusion that the set of monads in $M$ whose cohomology sheaf $\F$
satisfies the relation $\h^1(\F)=2$ is included in the closure of the set of monads for which $\h^1(\F)=1$.
Thus $X_3$ lies in the relative closure of $X_2$ in $U$, hence $X_3 \subset \overline{X}_2$.
\end{proof}

\begin{prop}
\label{5.6}
The sheaves $\F$ giving a point in $\M(6,1)$ and satisfying the cohomological
conditions $\h^0(\F(-1)) =1$ and $\h^1(\F) = 2$ are precisely the sheaves having a resolution of the form
\[
\tag{i}
0 \lra 2\O(-3) \stackrel{\f}{\lra} \O(-1) \oplus \O(1) \lra \F \lra 0,
\]
\[
\f = \left[
\ba{cc}
q_1 & q_2 \\
g_1 & g_2
\ea
\right],
\]
where $q_1, q_2$ are linearly independent two-forms without a common linear factor, or of the form
\[
\tag{ii}
0 \lra 2\O(-3) \oplus \O(-2) \stackrel{\f}{\lra} \O(-2) \oplus \O(-1) \oplus \O(1) \lra \F \lra 0,
\]
\[
\f= \left[
\ba{ccc}
\ell_1 & \ell_2 & 0 \\
q_1 & q_2 & \ell \\
g_1 & g_2 & h
\ea
\right],
\]
where $\ell_1, \ell_2$ are linearly independent one-forms, $\ell \neq 0$,
and there are no linear forms $u, v_1, v_2$ such that
$(q_1, q_2)= u (\ell_1, \ell_2) + \ell (v_1, v_2)$.
\end{prop}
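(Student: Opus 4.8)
The plan is to prove the two implications separately, following the template of Propositions \ref{5.1} and \ref{4.1}. For the forward direction, given $\F$ in $\M(6,1)$ with $\h^0(\F(-1))=1$ and $\h^1(\F)=2$, I would pass to the dual sheaf $\G=\F^\D(1)\in\M(6,5)$, which satisfies the dual conditions $\h^1(\G)=1$ and $\h^0(\G(-1))=2$, and put $m=\h^1(\G\tensor\Om^1(1))=\h^0(\F\tensor\Om^1(1))$. Writing down the Beilinson free monad for $\G$ (diagram (2.2.3) \cite{drezet-maican}) and processing it as in the proofs of \ref{5.1} and 3.2.5 \cite{mult_five}, one obtains a resolution of $\G$ depending on $m$; the injectivity of the monad differentials bounds $m$ from above, while a sub-maximal rank of the relevant block of scalars would make $\G$ surject onto, equivalently make $\F$ contain, a sheaf of slope exceeding $1/6$, which semi-stability forbids. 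This forces $m=3$ and a definite shape, and dualizing --- resolving the $\Om^1$-summands and performing the obvious cancellations, as in \ref{5.1} --- yields an exact sequence
\[
0 \lra 2\O(-3)\oplus\O(-2) \stackrel{\f}{\lra} \O(-2)\oplus\O(-1)\oplus\O(1) \lra \F \lra 0
\]
with entries of the degrees displayed in (ii). The scalar component $\O(-2)\to\O(-2)$ of $\f$ is then either non-zero, hence invertible, and cancelling that summand gives resolution (i); or it is zero, which is resolution (ii). In either case the conditions on the matrix follow from the semi-stability of $\F$: a common linear factor of $q_1,q_2$ in case (i), or the degeneracies $\ell_1,\ell_2$ linearly dependent, $\ell=0$, or $(q_1,q_2)=u(\ell_1,\ell_2)+\ell(v_1,v_2)$ in case (ii), would split $\f$ into blocks and produce a destabilizing sub- or quotient sheaf supported on a line or a conic, exactly as in \ref{5.1} and \ref{4.1}.

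Conversely, assume $\F=\Coker(\f)$ with $\f$ of form (i) or (ii) satisfying the stated conditions. The cohomological equalities $\h^0(\F(-1))=1$, $\h^1(\F)=2$ (and $\h^0(\F\tensor\Om^1(1))=3$, placing $\F$ in $X_4$) are read off from the long exact cohomology sequences of the resolution and of its tensor product with $\Om^1(1)$, using $\H^1(\P^2,\O(n))=0$ for all $n$, $\h^2(\O(-3))=1$ and $\h^0(\Om^1(2))=3$. The real work is semi-stability. In case (i), $\Coker(\f)$ is a pure sheaf with Fitting support the sextic $\{\det\f=0\}$ (the resolution itself exhibits homological dimension one), and the absence of a common linear factor of $q_1,q_2$ rules out destabilizing subsheaves, by an argument parallel to \ref{5.2}. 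In case (ii) I would argue by contradiction: a destabilizing $\E\subset\F$ may be taken semi-stable, so $\h^0(\E(-1))\le\h^0(\F(-1))=1$ and $\h^0(\E) \le 3$, which forces $\E$ into one of the moduli spaces $\M(r,\chi)$, $1 \le r \le 5$, with small $\chi$, whose resolutions are classified in \cite{drezet-maican} and \cite{mult_five}; for each such resolution the inclusion $\E\hookrightarrow\F$ lifts to a commutative diagram of resolutions whose right vertical lift $\a$ is injective on global sections (with the analogous injectivity for $\a(1)$ on the relevant summands, as in the proof of \ref{4.1}), and chasing these constraints forces $\f$ into the excluded shape, i.e. the relation $(q_1,q_2)=u(\ell_1,\ell_2)+\ell(v_1,v_2)$ --- a contradiction. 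A snake-lemma reduction of the resolution of $\F$ to a simpler sheaf, as used in the proof of \ref{5.1}, helps organize this case analysis.

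The delicate step is this last case analysis for resolution (ii). Because $\h^0(\F(-1))=1$, unlike every stratum treated so far where it vanishes, there is an extra family of candidate destabilizing subsheaves --- those with $\h^0(\E(-1))=1$, whose resolutions involve a summand $\O(1)$ --- and one must verify that each corresponding commutative diagram still forces a forbidden form of $\f$. A minor secondary point is checking that the dichotomy (i)/(ii) is exhaustive: that no further cancellation is available in case (ii), and that the Beilinson analysis never yields a strictly larger resolution.
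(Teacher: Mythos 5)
Your forward direction and your treatment of case (i) of the converse follow the paper's route: the paper likewise runs the Beilinson monad for $\G=\F^\D(1)$, pins down $m=\h^1(\G\tensor\Om^1(1))=3$ by excluding $m=4$ via a destabilising quotient, dualises, and splits into (i)/(ii) according to the rank ($6$ or $5$) of the block $6\O(-2)\to6\O(-2)$; and for (i) it identifies $\G\isom\J_Z(3)$ for a length-$4$ scheme $Z$ and bounds slopes of subsheaves via lemma 6.7 of \cite{maican}, exactly the \ref{5.2}-style argument you invoke.

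Where you genuinely diverge is the converse for case (ii). You propose the enumeration strategy of \ref{3.1} and \ref{4.1}: list all resolutions of a semi-stable destabilising $\E$ with $\h^0(\E(-1))\le 1$, $\h^0(\E)\le 3$, and chase each commutative diagram to a forbidden shape of $\f$. The paper instead first applies the snake lemma to exhibit $\F$ as an extension $0\to\J_Z(2)\to\F\to\C_x\to0$ with $Z=V(\ell,h)$ of length $3$ (when $\ell\nmid h$), and then uses the same lemma 6.7 slope estimate to show $\pp(\F')<1/6$ for \emph{every} subsheaf except possibly a non-split extension of $\C_x$ by $\O_Q(1)$, $Q$ a quintic; only that single residual case requires a diagram chase, and the case $\ell\mid h$ is dispatched separately by writing $\F$ as an extension of a sheaf in $\M(5,1)$ by $\O_L(-1)$. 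What the paper's route buys is a collapse of the case analysis to one family; what your route costs is a much larger enumeration than in \ref{4.1} (here $\chi(\E)$ can be $1$, $2$ or $3$ and $\h^0(\E(-1))$ can be $1$, so roughly fifteen moduli spaces $\M(r,\chi)$ contribute, each with several strata), together with the burden of verifying that the list is complete. There is also a technical point you should not gloss over: the global-sections trick that makes $\H^0(\a(k))$ injective fails at $k=2$ for this stratum, since $\H^0(2\O(-1)\oplus\O)\neq0$ means $\H^0(\B_\F(2))\to\H^0(\F(2))$ has a kernel; so the components of $\a$ landing in the $\O(-2)$ summand must be controlled by ad hoc kernel identifications (as the paper does with $\Ker(\a)\not\isom\O(-2)$ because it embeds in $2\O(-3)$), not by the uniform argument of \ref{4.1}. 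Your plan is workable but, as written, these two points --- completeness of the enumeration and the breakdown of the $\a(2)$ injectivity argument --- are asserted rather than secured, and they are precisely what the paper's extension-plus-slope-estimate argument is designed to avoid.
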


\begin{proof}
Consider a sheaf $\F$ in $\M(6,1)$ satisfying the cohomological conditions from the proposition.
Put $m=\h^0(\F \tensor \Om^1(1))$. The Beilinson free monad for $\G = \F^\D(1)$ reads
\[
0 \lra 2\O(-2) \lra 3\O(-2) \oplus (m+4) \O(-1) \lra m\O(-1) \oplus 6\O \lra \O \lra 0
\]
and yields the resolution
\[
0 \lra 2\O(-2) \lra 3\O(-2) \oplus (m+4)\O(-1) \lra \Om^1 \oplus (m-3) \O(-1) \oplus 6\O \lra \G \lra 0.
\]
We have $m \ge 3$. Moreover, $m \le 4$
because $\G$ maps surjectively onto the cokernel $\CC$ of the morphism
$3\O(-2) \to \Om^1 \oplus (m-3)\O(-1)$.
If $m=4$, then $\CC$ has Hilbert polynomial $\PP(t)=2t-1$, hence it is a destabilising quotient
sheaf of $\G$. We deduce that $m=3$.
As in the proof of 3.2.5 \cite{mult_five}, we can show that the morphism $2\O(-2) \to 7\O(-1)$
occurring in the resolution is equivalent to the morphism represented by the matrix
\[
\left[
\ba{ccccccc}
X & Y & Z & 0 & 0 & 0 & 0 \\
0 & 0 & 0 & X & Y & Z & 0
\ea
\right]^\T.
\]
Recall that the argument rests on the fact that the only morphism $\O_L(1) \to \G$,
for a line $L \subset \P^2$, is the zero-morphism. Indeed, both $\O_L(1)$ and $\G$ are
semi-stable and the slope of the first sheaf exceeds the slope of the second sheaf.
Thus far we have a resolution
\[
0 \lra 3\O(-2) \oplus 2\Om^1(1) \oplus \O(-1) \lra \Om^1 \oplus 6\O \lra \G \lra 0.
\]
Resolving $\Om^1$ yields the resolution
\[
0 \lra \O(-3) \oplus 3\O(-2) \oplus 2\Om^1(1) \oplus \O(-1) \lra 3\O(-2) \oplus 6\O \lra \G \lra 0.
\]
As in the proof of 2.1.4 \cite{mult_five}, it can be shown that the map $3\O(-2) \to 3\O(-2)$
has rank $3$. We arrive at the resolution
\[
0 \lra \O(-3) \oplus 2\Om^1(1) \oplus \O(-1) \lra 6\O \lra \G \lra 0.
\]
According to \cite{maican-duality}, lemma 3, taking duals yields the resolution
\[
0 \lra 6\O(-2) \lra \O(-1) \oplus 2\Om^1 \oplus \O(1) \lra \F \lra 0.
\]
Resolving $2\Om^1$ leads to the resolution
\[
0 \lra 2\O(-3) \oplus 6\O(-2) \stackrel{\r}{\lra} 6\O(-2) \oplus \O(-1) \oplus \O(1) \lra \F \lra 0.
\]
As $\r$ is injective, $\rank(\r_{12})$ is at lest $4$. If $\rank(\r_{12})=4$, then $\F$ would have a destabilising
quotient sheaf arising as the cokernel of an injective morphism $2\O(-3) \to 2\O(-2)$.

When $\rank(\r_{12})=6$ we get resolution (i). The conditions on $q_1$ and $q_2$ in the statement
follow from the semi-stability of $\F$.
Thus, if $q_1, q_2$ had a common linear factor, say $q_1 = \ell \ell_1, q_2 = \ell \ell_2$, then we would
get a commutative diagram
\[
\xymatrix
{
& 2\O(-3) \egal[r] \ar[d]^-{\psi} & 2\O(-3) \ar[d]^-{\f} \\
 0 \ar[r] & \O(-2) \oplus \O(1) \ar[r]^-{\l} & \O(-1) \oplus \O(1) \ar[r] & \O_L(-1) \ar[r] & 0
 },
 \]
 \[
 \psi = \left[
 \ba{cc}
 \ell_1 & \ell_2 \\
 g_1 & g_2
 \ea
 \right], \qquad \l = \left[
 \ba{cc}
 \ell & 0 \\
 0 & 1
 \ea
 \right].
 \]
 Here $L$ is the line with equation $\ell =0$.
 From the shake lemma we see that $\F$ would map surjectively into $\O_L(-1)$,
 in violation of semi-stability.

Finally, when $\rank(\r_{12})=5$ we get resolution (ii). Again, the conditions on $\f$ follow from the
semi-stability of $\F$. For instance, if $(q_1, q_2)= u (\ell_1, \ell_2) + \ell (v_1, v_2)$,
then $\f$ would be equivalent to the morphism represented by the matrix
\[
\left[
\ba{ccc}
\ell_1 & \ell_2 & 0 \\
0 & 0 & \ell \\
g_1 & g_2 & h
\ea
\right],
\]
hence $\O_L(-1)$ would be a destabilising quotient sheaf of $\F$. Here $L \subset \P^2$
is the line with equation $\ell =0$.

\medskip

\noi
Conversely, we assume that $\F$ has resolution (i) and we need to show that $\F$ is semi-stable.
Equivalently, we need to show that the dual sheaf $\G = \F^\D(1)$ gives a point in $\M(6,5)$.
Taking duals in (i) yields the resolution
\[
0 \lra \O(-3) \oplus \O(-1) \stackrel{\psi}{\lra} 2\O(1) \lra \G \lra 0,
\]
\[
\psi = \left[
\ba{cc}
g_1 & q_1 \\
g_2 & q_2
\ea
\right].
\]
Let $Z$ be the zero-dimensional scheme of length $4$ given by the ideal $(q_1, q_2)$.
Let $C \subset \P^2$ be the sextic curve with equation $q_1 g_2 - q_2 g_1 = 0$.
Let $\J_Z \subset \O_C$ be the ideal sheaf of $Z$ inside $C$.
It is clear from the above resolution that $\G$ is isomorphic to $\J_Z(3)$,
so we must show that $\J_Z$ is semi-stable. Let $\S \subset \J_Z$ be a subsheaf.
According to \cite{maican}, lemma 6.7, there is an ideal sheaf $\A \subset \O_C$ containing
$\S$ such that $\A/\S$ is supported on finitely many points and $\O_C/\A \isom \O_S$
for a curve $S \subset C$ of degree $d$.
We may assume that $1 \le d \le 5$. We have
\begin{align*}
\PP_{\S}(t) & = \PP_{\A}(t) - \h^0(\A/\S) \\
& = \PP_{\O_C}(t) - \PP_{\O_S}(t) - \h^0(\A/\S) \\
& = (6-d)t + \frac{(d+3)(d-6)}{2} - \h^0(\A/\S), \\
\pp(\S) & = - \frac{d+3}{2} - \frac{\h^0(\A/\S)}{6-d}.
\end{align*}
Thus $\pp(\S) < -13/6 = \pp(\J_Z)$ unless $d=1$ and $\A = \S$.
But in this case $S$ is a line and $Z$ is a subscheme of $S$.
From Bez\^out's theorem we see that the equation of $S$ divides both $q_1$ and $q_2$,
which is contrary to our hypothesis.
We conclude that $\G$ is semi-stable.

\medskip

\noi
We now assume that $\F$ has resolution (ii) and we aim at showing that $\F$ is semi-stable.
We shall first examine the case when $\ell$ does not divide $h$.
Let $x$ be the point given by the equations $\ell_1 = 0, \ell_2 = 0$.
Let $Z \subset \P^2$ be the zero-dimensional scheme of length $3$ given by the ideal
$(\ell, h)$ and let $\I_Z \subset \O$ be its ideal sheaf.
Let $C \subset \P^2$ be the sextic curve given by the equation $\text{det}(\f)=0$.
Let $\J_Z \subset \O_C$ be the ideal sheaf of $Z$ in $C$.
We apply the snake lemma to an exact diagram
similar to the diagram in the proof of 3.1.2(ii) \cite{mult_five}
to get the exact sequence
\[
0 \lra \O(-4) \lra \I_Z(2) \lra \F \lra \C_x \lra 0,
\]
which leads to the exact sequence
\[
0 \lra \J_Z(2) \lra \F \lra \C_x \lra 0.
\]
Let $\F' \subset \F$ be a subsheaf. Put $\K = \F' \cap \J_Z(2)$ and let $\CC$ be the image of
$\F'$ in $\C_x$. We shall estimate the slope of $\F'$ by the same method as above.
There is a sheaf $\A \subset \O_C(2)$ containing $\K$ such that $\A/\K$ is supported
on finitely many points and $\O_C(2)/\A \isom \O_S(2)$ for a curve $S \subset C$ of degree $d$.
We may assume that $1 \le d \le 5$ and we have
\[
\pp(\F')= \frac{1-d}{2} + \frac{\h^0(\CC) - \h^0(\A/\K)}{6-d}.
\]
Thus $\pp(\F') < 1/6 = \pp(\F)$ unless $d=1$, $\CC = \C_x$ and $\A = \K$.
In this case $\K \isom \O_Q(1)$, where $Q \subset \P^2$ is a quintic curve.
Thus $\F'$ is a non-split extension of $\C_x$ by $\O_Q(1)$.
According to 3.1.5 \cite{mult_five}, $\F'$ has a resolution of the form
\[
0 \lra 2\O(-3) \stackrel{\psi}{\lra} \O(-2) \oplus \O(1) \lra \F' \lra 0.
\]
This resolution must fit into a commutative diagram of the form
\[
\xymatrix
{
0 \ar[r] & 2\O(-3) \ar[d]^-{\b} \ar[r]^-{\psi} & \O(-2) \oplus \O(1) \ar[d]^-{\a} \ar[r] & \F' \ar[r] \ar[d] & 0 \\
0 \ar[r] & 2\O(-3) \oplus \O(-2) \ar[r]^-{\f} & \O(-2) \oplus \O(-1) \oplus \O(1) \ar[r] & \F \ar[r] & 0
}.
\]
Notice that $\a(-1)$ is injective on global sections, hence $\a_{32} \neq 0$.
Moreover, $\Ker(\a)$ cannot be isomorphic to $\O(-2)$ because it is a subsheaf of $2\O(-3)$.
Thus $\a$ is injective, and so is $\b$.
Modulo elementary operations on rows and columns we have four possibilities:
$\a = \a_1$ or $\a=\a_2$ and $\b = \b_1$ or $\b = \b_2$, where
\[
\a_1 = \left[
\ba{cc}
1 & 0 \\
0 & 0 \\
0 & 1
\ea
\right], \qquad \a_2 = \left[
\ba{cc}
0 & 0 \\
u & 0 \\
0 & 1
\ea
\right], \qquad \b_1 = \left[
\ba{cc}
1 & 0 \\
0 & 1 \\
0 & 0
\ea
\right], \qquad \b_2 = \left[
\ba{cc}
1 & 0 \\
0 & 0 \\
0 & v
\ea
\right].
\]
Here $u$ and $v$ are non-zero one-forms.
If $\a = \a_1$ and $\b = \b_1$, then $\f$ is equivalent to a morphism
represented by a matrix of the form
\[
\left[
\ba{ccc}
\star & \star & 0 \\
0 & 0 & \star \\
\star & \star & \star
\ea
\right]
\]
in violation of our hypothesis. If $\a = \a_1$ and $\b = \b_2$, then $\ell = 0$,
again contradicting the hypothesis. When $\a = \a_2$ we obtain the contradictory conclusion
that $\ell_1$ and $\ell_2$ are linearly dependent.
This shows that $\F$ cannot have a destabilising subsheaf $\F'$.

Lastly, we examine the case when $\ell$ divides $h$.
We may assume that $h = 0$.
Let $L$ be the line given by the equation $\ell = 0$ and let $\E$ be the sheaf given by the exact sequence
\[
0 \lra 2\O(-3) \stackrel{\psi}{\lra} \O(-2) \oplus \O(1) \lra \E \lra 0,
\]
\[
\psi = \left[
\ba{cc}
\ell_1 & \ell_2 \\
g_1 & g_2
\ea
\right].
\]
According to 3.1.5 \cite{mult_five}, $\E$ gives a point of $\M(5,1)$.
Since $\F$ is an extension of $\E$ by $\O_L(-1)$, we have $\pp(\F') \le 0$ for every proper subsheaf
$\F' \subset \F$, unless the induced map $\F' \to \E$ is an isomorphism.
However, we have seen above that $\E$ cannot be isomorphic to a subsheaf of $\F$.
\end{proof}

\noi
Let $\W_4 = \Hom(2\O(-3) \oplus \O(-2), \O(-2) \oplus \O(-1) \oplus \O(1))$,
let $W_4 \subset \W_4$ be the set of injective morphisms with semi-stable cokernel.
We claim that $W_4$ is open.
To see this consider the open subset $W_4' \subset \W_4$ of injective morphisms.
The family of sheaves $\Coker(\f)$, $\f \in W_4'$, is flat over $W_4'$.
Semi-stability is an open condition on flat families, hence $W_4$ is open in $W_4'$.
Let
\[
G_4 = (\Aut(2\O(-3) \oplus \O(-2)) \times \Aut(\O(-2) \oplus \O(-1) \oplus \O(1)))/\C^*
\]
be the natural group acting by conjugation on $\W_4$.
Let $X_4 \subset \M(6,1)$ be the set of isomorphism classes of sheaves of the form
$\Coker(\f)$, $\f \in W_4$. We equip $X_4$ with the canonical induced reduced structure.

\begin{prop}
\label{5.7}
There exists a geometric quotient $W_4/G_4$ and it is isomorphic to $X_4$.
\end{prop}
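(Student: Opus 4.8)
The plan is to follow, essentially line for line, the template of the proof of Proposition~\ref{3.2}. Mapping $\f$ to the isomorphism class of $\Coker(\f)$ defines a morphism $\r \colon W_4 \to X_4$: the sheaves $\Coker(\f)$, $\f \in W_4$, form a flat family of semi-stable sheaves, hence yield a classifying morphism $W_4 \to \M(6,1)$ with image $X_4$, and this morphism factors through the reduced subscheme $X_4$ because $W_4$ is reduced. The fibres of $\r$ are precisely the $G_4$-orbits. To check this for a fixed $\F$ giving a point of $X_4$, one first observes that the entry $\f_{13} \colon \O(-2) \to \O(-2)$ is merely rescaled under $G_4$ (since $\Hom(\O(-1),\O(-2)) = \Hom(\O(1),\O(-2)) = \Hom(\O(-2),\O(-3)) = 0$), so that the two conditions $\f_{13} \neq 0$ and $\f_{13} = 0$ are $G_4$-invariant; in the first case $\f$ is equivalent, after cancelling a trivial direct summand $\O(-2) \stackrel{=}{\lra} \O(-2)$, to resolution (i) of Proposition~\ref{5.6}, in the second case to resolution (ii), and in either case the minimal resolution of $\F$ is unique up to $\Aut(2\O(-3)\oplus\O(-2)) \times \Aut(\O(-2)\oplus\O(-1)\oplus\O(1))$, the vanishings $\h^1(\F)=2$ and $\h^0(\F(-1))=1$ ruling out any further cancellation. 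Hence it suffices to prove that $\r$ is a geometric quotient map; $W_4/G_4$ will then exist and be isomorphic to $X_4$.

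To prove that $\r$ is a categorical quotient map I would use the method of 3.1.6~\cite{drezet-maican}: one must show that for every $\F$ in $X_4$ the resolution of Proposition~\ref{5.6} can be recovered functorially, and in families, from the Beilinson spectral sequence of $\F$ --- or, more conveniently, of $\G = \F^{\D}(1)$, exactly as in the proofs of Propositions~\ref{4.4} and \ref{5.4}. Most of this is already contained in the proof of Proposition~\ref{5.6}: from the Beilinson monad of $\G$ one produces canonically the resolution
\[
0 \lra 2\O(-3) \oplus 6\O(-2) \stackrel{\r}{\lra} 6\O(-2) \oplus \O(-1) \oplus \O(1) \lra \F \lra 0,
\]
and cancelling the maximal-rank block $\r_{12}$ is canonical up to the automorphism groups. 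The point that needs care --- and which I expect to be the main obstacle --- is that $\rank(\r_{12})$ equals $6$ over the dense open subset of $X_4$ parametrising sheaves with resolution (i) and drops to $5$ over the complementary closed subset (resolution (ii)); one must organise the cancellation so that, uniformly over all of $X_4$, it produces a family of morphisms into $\W_4$ with values in $W_4$, i.e. one has to fit the generic case into the shape of $\W_4$ by means of the canonical device $\O(-2) \stackrel{=}{\lra} \O(-2)$ in a way compatible with the degeneration onto the boundary stratum. Granting this, 3.1.6~\cite{drezet-maican} shows that $\r$ is a categorical quotient map.

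It then remains to upgrade ``categorical'' to ``geometric'', again as in the proof of Proposition~\ref{3.2}. Since $W_4$ is open in the vector space $\W_4$ it is smooth, hence normal, so by \cite{mumford}, remark~(2), p.~5, the categorical quotient $X_4$ is normal. The fibres of $\r$ are single $G_4$-orbits, and $\r$ restricts to a geometric quotient over a dense open subset of $X_4$ --- namely over the image of $\{\f \in W_4 : \f_{13}\neq 0\}$, whose quotient one may construct directly, as in the proof of Proposition~\ref{5.4}, realising it as a proper open subset of a $\P^{23}$-bundle over an open subset of $\Grass(2,6)$ (this matches the description of $X_4$ in the introduction). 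Therefore \cite{popov-vinberg}, theorem~4.2, applies and shows that $\r$ is a geometric quotient map, so that $W_4/G_4$ exists and is isomorphic to $X_4$. As indicated, the delicate part of the whole argument will be the stratification bookkeeping in the middle step: checking that the Beilinson construction, which a priori yields resolutions whose connecting map has non-constant rank, descends to a single well-defined morphism to $W_4 \subset \W_4$ across the locus separating the two forms of $\f$.
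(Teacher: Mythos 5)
Your proposal follows essentially the same route as the paper: recover resolution \ref{5.6} naturally from the Beilinson spectral sequence of $\G=\F^{\D}(1)$ (as already carried out in the proof of \ref{5.6}), invoke the method of 3.1.6 \cite{drezet-maican} to get a categorical quotient, then use normality of $X_4$ via \cite{mumford} and theorem 4.2 of \cite{popov-vinberg} to upgrade to a geometric quotient. The point you flag as delicate --- making the cancellation of the rank-$\le 6$ block $\r_{12}$ uniform across the locus where its rank drops from $6$ to $5$, via the padding $\O(-2)\stackrel{=}{\lra}\O(-2)$ --- is exactly the point the paper also passes over with the single remark that the construction is natural and hence works in local flat families, so your treatment is at the same level of detail as the original.
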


\begin{proof}
Let $\F$ give a point in $X_4$ and let $\G = \F^\D(1)$.
The Beilinson tableau (2.2.3) \cite{drezet-maican} for $\G$ takes the form
\[
\xymatrix
{
3\O(-2) \ar[r]^-{\f_1} & 3\O(-1) \ar[r]^-{\f_2} & \O \\
2\O(-2) \ar[r]^-{\f_3} & 7\O(-1) \ar[r]^-{\f_4} & 6\O
}.
\]
As at 2.2.4 \cite{mult_five}, we have $\Ker(\f_2) = \Im(\f_1)$ and $\Ker(\f_1) \isom \O(-3)$.
The exact sequence (2.2.5) \cite{drezet-maican} takes the form
\[
0 \lra \O(-3) \stackrel{\f_5}{\lra} \Coker(\f_4) \lra \G \lra 0.
\]
As at 3.2.5 \cite{mult_five}, it can be shown that $\f_3$ is equivalent to the morphism
represented by a matrix of the form
\[
\left[
\ba{ccccccc}
X & Y & Z & 0 & 0 & 0 & 0 \\
0 & 0 & 0 & X & Y & Z & 0
\ea
\right]^\T.
\]
Combining the above resolution of $\G$ with the exact sequence
\[
0 \lra 2\Om^1(1) \oplus \O(-1) \lra 6\O \lra \Coker(\f_4) \lra 0
\]
we obtain the resolution
\[
0 \lra \O(-3) \oplus 2\Om^1(1) \oplus \O(-1) \lra 6\O \lra \G \lra 0.
\]
We have seen at \ref{5.6} how the above leads to a resolution
\[
0 \lra 2\O(-3) \oplus \O(-2) \stackrel{\f}{\lra} \O(-2) \oplus \O(-1) \oplus \O(1) \lra \F \lra 0
\]
with $\f \in W_4$. This construction of $\f$ is natural, so it works for local flat families of
sheaves giving points in $X_4$.
As at 3.1.6 \cite{drezet-maican}, we may infer that the canonical map $W_4 \to X_4$
is a categorical quotient map.
According to \cite{mumford}, remark (2), p. 5, $X_4$ is normal.
Applying \cite{popov-vinberg}, theorem 4.2,
we conclude that $W_4 \to X_4$ is a geometric quotient map.
\end{proof}

\begin{prop}
\label{5.8}
The generic sheaves in $X_4$ are of the form $\O_C(1)(P_1+ P_2+ P_3 + P_4)$,
where $C \subset \P^2$ is a smooth sextic curve and $P_1, P_2, P_3, P_4$ are points on $C$,
no three of which are colinear. In particular, $X_4$ lies in the closure of $X_2$
\end{prop}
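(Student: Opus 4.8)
The proof naturally divides into the geometric description of the generic sheaf of $X_4$ and the inclusion $X_4 \subset \overline{X}_2$.

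\textbf{The generic sheaf.} By proposition \ref{5.6} the generic sheaves in $X_4$ are those with resolution (i),
\[
0 \lra 2\O(-3) \stackrel{\f}{\lra} \O(-1) \oplus \O(1) \lra \F \lra 0, \qquad
\f = \left[ \ba{cc} q_1 & q_2 \\ g_1 & g_2 \ea \right],
\]
with $q_1, q_2$ linearly independent coprime conics and $g_1, g_2$ quartic forms. Put $F = q_1 g_2 - q_2 g_1$, let $C = \{ F = 0 \}$ and $Z = V(q_1, q_2)$; the latter has length $4$ because $q_1, q_2$ are coprime, and for generic $\f$ the curve $C$ is a smooth sextic and $Z$ consists of four distinct points $P_1, \dots, P_4$ of $C$. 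The plan is to extract from the resolution the exact sequence
\[
0 \lra \O_C(1) \lra \F \lra \O_Z \lra 0.
\]
To do so I would compose the inclusion $\O(1) \hookrightarrow \O(-1) \oplus \O(1)$ of the second summand with the surjection onto $\F$, obtaining $\a \colon \O(1) \to \F$. Because $q_1, q_2$ have no common factor, the image of $(q_1,q_2)\colon 2\O(-3) \to \O(-1)$ is $\I_Z(-1)$, while the columns of $\f$ with vanishing first component are the multiples of the Koszul syzygy $(q_2,-q_1)$, contributing $F\cdot\O(-5)$ in the second component; hence $\Coker(\a) \isom \O_Z$ and $\Ker(\a) \isom \O(-5)$, so the image of $\a$ is $\O(1)/F\O(-5) = \O_C(1)$, which yields the displayed sequence.

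Since $\F$ is stable it is pure of dimension one, and it is supported on the smooth curve $C$, so $\F$ is an invertible $\O_C$-module of degree $10$. The inclusion $\O_C(1) \hookrightarrow \F$ is then multiplication by a section of $\F \tensor \O_C(-1)$ whose divisor, by the above, equals $P_1 + \dots + P_4$; thus $\F \tensor \O_C(-1) \isom \O_C(P_1 + \dots + P_4)$, i.e. $\F \isom \O_C(1)(P_1 + \dots + P_4)$. The non-colinearity is automatic: if three of the $P_i$ lay on a line $\{\ell = 0\}$, then $\ell$ would divide both conics $q_1$ and $q_2$, against the hypothesis on $\f$. Conversely, for any smooth sextic $C$ and four distinct, no-three-colinear points on it, $\h^0(\I_Z(2)) = 2$ and the pencil $\I_Z(2)$ has no fixed line, so a basis $q_1, q_2$ is coprime; since $Z \subset C$, the equation of $C$ lies in $q_1\SS^4V^* + q_2\SS^4V^*$ and may be written $q_1 g_2 - q_2 g_1$, producing a sheaf as above, semi-stable by proposition \ref{5.6}. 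So the sheaves $\O_C(1)(P_1 + \dots + P_4)$ as described are dense in $X_4$.

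\textbf{The inclusion $X_4 \subset \overline{X}_2$.} As $X_4$ is irreducible it suffices to show that a generic $\F \in X_4$ is a flat limit of sheaves giving points of $X_2$; I would argue as in proposition \ref{5.5} (and 2.1.6 \cite{mult_five}). One observes that $\h^1(\F(1)) = 0$ for the generic sheaves of both $X_2$ and $X_4$, so both families lie densely in an open $U' \subset \M(6,1)$ parametrised by an open subset $M'$ of a space of Beilinson monads; one then shows $M'$ is smooth and that the map sending a monad to its last differential is a submersion, and deduces that the locus of monads whose cohomology sheaf satisfies the cohomological conditions defining $X_4$ is contained in the closure of the locus satisfying those defining $X_2$. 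The hard part will be the set-up of $U'$ and $M'$: unlike in proposition \ref{5.5}, in passing from $X_2$ to $X_4$ the three invariants $\h^0(\F(-1))$, $\h^1(\F)$ and $\h^0(\F\tensor\Om^1(1))$ all jump together, so one must check that on $U'$ these jumps are governed by a single rank condition on the monad, so that the submersion argument of proposition \ref{5.5} carries over.
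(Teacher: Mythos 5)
Your first half is correct and, up to working with $\F$ directly rather than with the dual sheaf $\G = \F^\D(1)$, it is the same computation as in the paper: the paper dualises resolution \ref{5.6}(i) to exhibit $\G \isom \J_Z(3)$ for the length-$4$ complete intersection $Z = V(q_1,q_2) \subset C$, which is exactly the dual of your extension $0 \to \O_C(1) \to \F \to \O_Z \to 0$. (Both you and the paper use, without further comment, that the type-(i) locus is dense in $X_4$.)

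The second half has a genuine gap. You propose to transplant the monad argument of proposition \ref{5.5}, but that argument lives on the open set $U$ defined by $\h^0(\F(-1)) = 0$ and $\h^1(\F(1)) = 0$ precisely because these vanishings force the Beilinson data of every sheaf in $U$ into a monad of the single fixed shape $0 \to 11\O(-1) \to 16\O \to 5\O(1) \to 0$ (for instance the last term is $\H^1(\F(-1)) \tensor \O(1)$, whose dimension is $5 + \h^0(\F(-1))$). On $X_4$ one has $\h^0(\F(-1)) = 1$, so $X_4$ is disjoint from $U$, and for any putative $U'$ meeting both strata the Beilinson complex acquires an extra $\H^0$ term exactly along $X_4$; there is no common fixed-shape monad space, hence no single rank condition on a fixed differential to which the submersion argument could be applied. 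You correctly flag this as ``the hard part,'' but deferring it leaves the inclusion $X_4 \subset \overline{X}_2$ unproved. The paper avoids the issue entirely by an explicit one-parameter degeneration: it chooses a fifth point $P_5$ in $\{g_2=0\} \cap \{q_2=0\}$, sets $\E = \O_C(3)(-P_1-\cdots-P_5+P_6)$ with $P_6 \to P_5$, and verifies by a horseshoe-lemma computation that $\E$ admits a resolution whose transpose satisfies the conditions of \ref{4.1}, so that $\E$ lies in $X_2^\D$ and its limit is the given generic point of $X_4^\D$. To complete your proof you should either carry out such an explicit degeneration (and the verification that the approximating sheaves really lie in $X_2$, which is the substantive part) or supply a semicontinuity argument that genuinely accommodates the jump of $\h^0(\F(-1))$; the one you cite does not apply as stated.
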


\begin{proof}
Let $X_{40} \subset X_4$ be the subset of sheaves from \ref{5.6}(i) and
let $X_{40}^\D \subset \M(6,5)$ be the dual subset.
Dualising the exact sequence \ref{5.6}(i), we see that the sheaves $\G$ giving points in $X_{40}^\D$
are precisely the sheaves having a resolution of the form
\[
0 \lra \O(-3) \oplus \O(-1) \stackrel{\psi}{\lra} 2\O(1) \lra \G \lra 0,
\]
\[
\psi = \left[
\ba{cc}
g_1 & q_1 \\
g_2 & q_2
\ea
\right],
\]
where $q_1, q_2$ have no common factor. Let $Z \subset \P^2$ be the zero-dimensional subscheme
of length $4$ given by the equations $q_1 = 0, q_2 = 0$, let $C \subset \P^2$ be the sextic curve
given by the equation $\det(\psi)=0$, let $\J_Z \subset \O_C$ be the ideal sheaf of $Z$ in $C$.
Clearly, $\G \isom \J_Z(3)$. Conversely, any twisted ideal sheaf $\J_Z(3)$ gives a point in $X_{40}^\D$.
Thus the generic sheaves in $X_4^\D$ have the form $\O_C(3)(-P_1 -P_2 -P_3 -P_4)$, where
$C$ is a somooth sextic curve and $P_1, P_2, P_3, P_4$ are four distinct points on $C$ in general
linear position. The first part of the proposition follows by duality.

To show that $X_4 \subset \overline{X}_2$ we fix a generic sheaf $\G= \O_C(3)(-P_1 -P_2 -P_3 -P_4)$
in $X_4^\D$ as above and we attempt to show that this is in the closure of $X_2^\D$.
We assume, in addition, that the quartic curve with equation $g_2 = 0$ and the conic with equation
$q_2 = 0$ have at least one point of intersection, call it $P_5$, that is distinct from $P_i$, $1 \le i \le 4$.
Choose a sixth point $P_6$ on $C$ distinct from the other five, that converges to $P_5$.
The sheaf $\E = \O_C(3)(-P_1 -P_2 -P_3 -P_4 -P_5 +P_6)$ gives a point in $\M(6,5)$
that converges to the point represented by $\G$.
We claim that $\E$ gives a point in $X_2^\D$.
Assuming the claim to be true, we may conclude that the isomorphism class of $\G$
belongs to $\overline{X}_2^\D$.

It remains to show that $\E$ lies in $X_2^\D$.
Write $q_2 = \ell_{11} \ell_{22} - \ell_{12} \ell_{21}$, $\ell_{ij} \in V^*$, such that $\ell_{12}, \ell_{22}$
are linearly independent and the lines they determine meet at $P_5$.
Consider the sheaf $\E'= \O_C(3)(-P_1 -\ldots -P_5)$ and the zero-dimensional subscheme
$Z' \subset \P^2$ of length $5$ supported on $\{ P_1, \ldots, P_5 \}$.
We have an exact sequence
\[
0 \lra 2\O(-1) \stackrel{\b}{\lra} 2\O \oplus \O(1) \stackrel{\a}{\lra} \O(3) \lra \O_{Z'} \lra 0,
\]
\[
\a = \left[
\ba{ccccc}
-q_1 \ell_{22} & & q_1 \ell_{12} & & \ell_{11} \ell_{22} -\ell_{12} \ell_{21}
\ea
\right],
\]
\[
\b = \left[
\ba{cc}
\ell_{11} & \ell_{12} \\
\ell_{21} & \ell_{22} \\
q_1 & 0
\ea
\right].
\]
Exactness at $2\O \oplus \O(1)$ can be checked directly.
$\Coker(\a)$ has Hilbert polynomial 5 and contains all points of $Z'$ in its support, hence
$\Coker(\a) \isom \O_{Z'}$. Thus $\Im(\a) = \I_{Z'}(3)$, from which we deduce that $\E'$
has resolution
\[
0 \lra \O(-3) \oplus 2\O(-1) \lra 2\O \oplus \O(1) \lra \E' \lra 0.
\]
We apply the horseshoe lemma to the extension
\[
0 \lra \E' \lra \E \lra \C_{P_6} \lra 0,
\]
to the above resolution of $\E'$ and to the standard resolution of $\C_{P_6}$ tensored with $\O(-1)$.
We obtain the exact sequence
\[
0 \lra \O(-3) \lra \O(-3) \oplus 2\O(-2) \oplus 2\O(-1) \lra \O(-1) \oplus 2\O \oplus \O(1) \lra \E \lra 0.
\]
The above extension does not split and $\Ext^1(\C_{P_6}, 2\O \oplus \O(1))=0$,
so we can use the argument at 2.3.2 \cite{mult_five} to conclude that
the morphism $\O(-3) \to \O(-3)$ in the above complex is non-zero.
Cancelling $\O(-3)$ we arrive at the resolution
\[
0 \lra 2\O(-2) \oplus 2\O(-1) \stackrel{\f}{\lra} \O(-1) \oplus 2\O \oplus \O(1) \lra \E \lra 0,
\]
\[
\f = \left[
\ba{cccc}
\ell_1 & \ell_2 & 0 & 0 \\
\star & \star & \ell_{11} & \ell_{12} \\
\star & \star & \ell_{21} & \ell_{22} \\
\star & \star & q_1 & 0
\ea
\right],
\]
where $\ell_1, \ell_2$ are linearly independent one-forms.
It is easy to see that the transpose of $\f$ satisfies the conditions of \ref{4.1}.
In view of our hypothesis on $q_1$ and $q_2$,
the relation $(c_1 \ell_{12} + c_2 \ell_{22})q_1 = u q_2$, $c_1, c_2 \in \C$, $u \in V^*$,
is possible only if $c_1=0$, $c_2 = 0$.
We conclude that $\E$ gives a point in $X_2^\D$.
\end{proof}

%%%%%%%%%%%%%%%%%%%%%%%%%%%%%%%%%%%%%%%%%% section 6

\section{The codimension $8$ stratum}

\begin{prop}
\label{6.1}
The sheaves $\G$ in $\M(6,4)$ satisfying the condition $\h^0(\G(-2)) >0$
are precisely the sheaves with resolution of the form
\[
0 \lra 2\O(-3) \stackrel{\f}{\lra} \O(-2) \oplus \O(2) \lra \G \lra 0,
\]
\[
\f = \left[
\ba{cc}
\ell_1 & \ell_2 \\
f_1 & f_2
\ea
\right],
\]
where $\ell_1, \ell_2$ are linearly independent one-forms.
These sheaves are precisely the non-split extension sheaves of the form
\[
0 \lra \O_C(2) \lra \G \lra \C_x \lra 0,
\]
where $C \subset \P^2$ is a sextic curve and $\C_x$ is the structure sheaf of a point.
\end{prop}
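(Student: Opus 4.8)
I would prove that, for a coherent sheaf $\G$ on $\P^2$, the following are equivalent: (a) $\G$ gives a point of $\M(6,4)$ and $\h^0(\G(-2))>0$; (b) $\G$ has a resolution as in the first displayed formula with $\ell_1,\ell_2$ linearly independent; (c) $\G$ is a non-split extension $0\to\O_C(2)\to\G\to\C_x\to0$ with $C$ a sextic. The implications I would run are (a)$\Rightarrow$(c)$\Rightarrow$(b)$\Rightarrow$(a). The one numerical fact used throughout is that a plane curve $D=\{h=0\}$ satisfies $\pp(\O_D)=(3-\deg h)/2$, and that $\O_D$ — hence $\O_D(2)$, which has slope $1/2$ — is semistable, because every pure one-dimensional quotient of $\O_D=\O/(h)$ has the form $\O/(h')$ with $h'$ dividing $h$, of slope $(3-\deg h')/2\ge(3-\deg h)/2$. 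For (a)$\Rightarrow$(c): a nonzero section of $\G(-2)$ is a nonzero morphism $\O(2)\to\G$; as a semistable $\G$ is pure, its image $\I$ is a pure one-dimensional cyclic sheaf, so $\I\isom\O_D(2)$ where $\deg D$ is the multiplicity of $\I$, at most $6$. Since $\pp(\O_D(2))=(7-\deg D)/2>2/3=\pp(\G)$ when $\deg D\le 5$, semistability forces $\deg D=6$; then $\I=\O_C(2)$ for a sextic $C$, and $\G/\I$ has multiplicity $0$ and Euler characteristic $\chi(\G)-\chi(\O_C(2))=4-3=1$, so $\G/\I\isom\C_x$. The extension is non-split: a split one would have $\C_x$ as zero-dimensional torsion, contradicting purity.

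For (c)$\Rightarrow$(b): first $x\in C$, else $\Ext^1(\C_x,\O_C(2))=0$ and the extension splits. The local-to-global spectral sequence identifies $\Ext^1(\C_x,\O_C(2))$ with $\H^0({\mathcal Ext}^1(\C_x,\O_C(2)))$, a length-one sheaf at $x$ — one computes ${\mathcal Ext}^1(\C_x,\O_C)$ from $0\to\O(-6)\to\O\to\O_C\to0$ and finds it is $\ker(\C_x\xrightarrow{g}\C_x)=\C_x$, since $g$ vanishes at $x$ — so the extension is locally non-split at $x$, whence $\G_x$ has no finite-length submodule, i.e.\ has depth $\ge1$. Since $\G$ coincides with $\O_C(2)$ away from $x$, it follows that $\G$ has projective dimension $\le1$, in particular no zero-dimensional torsion. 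Resolve $\O_C(2)$ by $0\to\O(-4)\xrightarrow{g}\O(2)\xrightarrow{\e}\O_C(2)\to0$; since $\H^1(\O_C(4))=\H^0(\O_C(-1))^*=0$, the surjection $\G\to\C_x$ comes from a morphism $\t\colon\O(-2)\to\G$, and $(\t,\e')\colon\O(-2)\oplus\O(2)\to\G$, with $\e'$ the composite $\O(2)\xrightarrow{\e}\O_C(2)\subset\G$, is surjective. Its kernel $\mathcal K$ is a rank-two subsheaf of a locally free sheaf whose cokernel $\G$ has no zero-dimensional torsion, hence $\mathcal K$ is reflexive, hence locally free of rank $2$; a Chern-class computation gives $c_1(\mathcal K)=-6$, $c_2(\mathcal K)=9$. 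Because $\deg C=6$ exceeds the degree of any form that could occur in a line subbundle of $\mathcal K$ (so that such a form would be forced to be divisible by $g$, hence zero), $\mathcal K$ has no line subbundle of slope $>-3$ and is $\mu$-semistable; then $\h^0(\mathcal K(3))\ge\chi(\mathcal K(3))=2$ — here $\h^2(\mathcal K(3))=\h^0(\mathcal K)=0$, as is seen directly from $\mathcal K\subset\O(-2)\oplus\O(2)$ and $\ker(\e')\isom\O(-4)$ — and two independent sections $\O^2\to\mathcal K(3)$ have nonzero determinant by $\mu$-semistability, so $\mathcal K\isom 2\O(-3)$. Composing $\mathcal K\hookrightarrow\O(-2)\oplus\O(2)$ with the projection onto $\O(-2)$ produces the top row $(\ell_1,\ell_2)$ of $\f$, whose kernel as a sheaf morphism is $\ker(\e')\isom\O(-4)$; this forces $\ell_1,\ell_2$ linearly independent.

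For (b)$\Rightarrow$(a): the Hilbert polynomial of $\G$ is $6m+4$ by a direct computation, and $\G$ is pure because it has a length-one locally free resolution. Project the target $\O(-2)\oplus\O(2)$ of $\f$ onto $\O(-2)$; the resulting morphism $2\O(-3)\to\O(-2)$ is the top row $(\ell_1,\ell_2)$, which — $\ell_1,\ell_2$ being independent — is injective with cokernel $\C_x$, $x=\{\ell_1=\ell_2=0\}$. The long exact cohomology sequence of the induced short exact sequence of two-term complexes gives $0\to\O(-4)\xrightarrow{\det\f}\O(2)\to\G\to\C_x\to0$, with $\det\f\ne0$ since $\f$ is injective; hence $0\to\O_C(2)\to\G\to\C_x\to0$ with $C=\{\det\f=0\}$ a sextic, which is (c), and in particular $\h^0(\G(-2))\ge\h^0(\O_C)=1$. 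It remains to check semistability. A destabilizing subsheaf $\F'\subset\G$ is either contained in $\O_C(2)$, impossible since then $\pp(\F')\le1/2<2/3$, or else $\F'':=\F'\cap\O_C(2)\subsetneq\O_C(2)$ with $\F'/\F''\isom\C_x$; writing $r$ for the multiplicity of $\F''$, its saturation inside $\O_C(2)$ is $\O_{\{h'=0\}}(r-4)$ for a degree-$r$ factor $h'$ of $\det\f$, of Euler characteristic $r(r-5)/2$, whence $\pp(\F')\le\frac1r\bigl(\frac{r(r-5)}2+1\bigr)<\frac23$ for $r\le5$ and $\pp(\F')\le\frac12$ for $r=6$ — a contradiction either way. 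This establishes the equivalences, and both displayed descriptions follow.

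The technical heart is the step $\mathcal K\isom 2\O(-3)$: excluding rank-two bundles with $c_1=-6,c_2=9$ other than $2\O(-3)$ is not automatic, and requires the $\mu$-semistability observation, which itself rests on $\deg C=6$ being too large to admit a relevant dividing form. A secondary subtlety is purity of $\G$ at a singular point $x\in C$, which is handled by the depth computation above. An alternative to (a)$\Rightarrow$(b), in the spirit of the proofs of \ref{5.1} and \ref{5.6}, is to read off the resolution from the Beilinson spectral sequence of $\G$, the summand $\O(2)$ being accounted for by $\h^0(\G(-2))>0$; there one must still control the sheaves whose cohomology does not take the generic values.
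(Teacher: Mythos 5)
Your proof is correct, and the overall skeleton (section of $\G(-2)$ gives $\O_C(2)\subset\G$ with quotient $\C_x$; non-splitness equals purity; slope estimates via the fact that pure cyclic quotients of $\O_C$ are $\O_{C'}$ for $C'\subset C$) matches what the paper invokes by reference to the analogous statement 3.1.5 of \cite{mult_five}. Where you genuinely diverge is the implication ``non-split extension $\Rightarrow$ resolution.'' The paper's method, as one sees from the detailed proofs of \ref{6.2}, \ref{5.4} and \ref{5.8}, is to apply the horseshoe lemma to the extension using the Koszul resolution $0\to\O(-4)\to2\O(-3)\to\O(-2)\to\C_x\to0$ and the resolution $0\to\O(-4)\to\O(2)\to\O_C(2)\to0$, producing $0\to\O(-4)\to\O(-4)\oplus2\O(-3)\to\O(-2)\oplus\O(2)\to\G\to0$, and then to use non-splitness (together with $\Ext^1(\C_x,\O(2))=0$) to see that the map $\O(-4)\to\O(-4)$ is nonzero and may be cancelled. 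You instead build the surjection $\O(-2)\oplus\O(2)\to\G$ by lifting through $\H^1(\O_C(4))=0$, observe that its kernel $\mathcal K$ is locally free with $c_1=-6$, $c_2=9$, and then identify $\mathcal K\isom2\O(-3)$ by excluding line subbundles of degree $\ge-2$ (forms of degree $\le4$ divisible by the sextic equation vanish) and counting sections of $\mathcal K(3)$. Both are sound; the horseshoe route is shorter and requires no classification of rank-two bundles, while yours is self-contained and makes the linear independence of $\ell_1,\ell_2$ transparent (the kernel of the top row must be $\O(-4)$). Your converse direction also reproves the slope bound that the paper delegates to \cite{maican}, lemma 6.7, which is harmless.
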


\begin{proof}
The argument is entirely analogous to the argument at 3.1.5 \cite{mult_five}.
\end{proof}

\begin{prop}
\label{6.2}
The sheaves $\G$ in $\M(6,5)$ satisfying the condition $\h^0(\G(-2)) > 0$
are precisely the sheaves with resolution of the form
\[
0 \lra \O(-3) \oplus \O(-2) \stackrel{\f}{\lra} \O(-1) \oplus \O(2) \lra \G \lra 0,
\]
\[
\f = \left[
\ba{cc}
q  & \ell \\
g & h
\ea
\right],
\]
where $\ell \neq 0$ and $\ell$ does not divide $q$. These sheaves are precisely
the extension sheaves of the form
\[
0 \lra \O_C(2) \lra \G \lra \O_Z \lra 0
\]
that do not have zero-dimensional torsion. Here $C \subset \P^2$ is a sextic curve
and $Z \subset \P^2$ is a zero-dimensional scheme of length $2$.
\end{prop}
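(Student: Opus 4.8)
\emph{Plan of proof.} I would prove the two asserted characterizations by running around a cycle of implications: (P) $\G\in\M(6,5)$ with $\h^0(\G(-2))>0$ $\Rightarrow$ (R) $\G$ is an extension of the stated type without zero-dimensional torsion; (R) $\Rightarrow$ (Q) $\G$ has the stated resolution with $\ell\neq0$, $\ell\nmid q$; (Q) $\Rightarrow$ (P) and (Q) $\Rightarrow$ (R). For (P)$\Rightarrow$(R): a non-zero section of $\G(-2)$ is a non-zero morphism $\a\colon\O(2)\to\G$; set $\mathcal I=\Im(\a)$. Being a subsheaf of the pure sheaf $\G$, $\mathcal I$ is pure of dimension $1$, so the kernel of $\O(2)\to\mathcal I$ is reflexive — otherwise its reflexive hull would create zero-dimensional torsion in $\mathcal I$ — hence a line bundle $\O(b)$, and $\mathcal I\isom\O_D(2)$ for a plane curve $D$ of degree $2-b$. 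Since $\O(2)$ cannot surject onto $\G$ (a pure quotient of $\O(2)$ has Hilbert polynomial $\neq 6m+5$), $\mathcal I$ is proper, so semi-stability forces $\pp(\O_D(2))=(7-\deg D)/2\le 5/6$, i.e. $\deg D=6$; thus $D$ is the Fitting support $C$ and $\mathcal I\isom\O_C(2)$. Then $\G/\O_C(2)$ has Hilbert polynomial $2$, so it is $\O_Z$ for a length-two $Z$, and $\G$ (semi-stable) has no zero-dimensional torsion.

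\emph{From the extension to the resolution.} Resolve $\O_C(2)$ by $0\to\O(-4)\stackrel{f_C}{\to}\O(2)\to\O_C(2)\to0$, with $f_C$ the equation of $C$. A general linear form gives a surjection $\O(-1)\to\O_Z$; the obstruction to lifting it to $\O(-1)\to\G$ lies in $\Ext^1(\O(-1),\O_C(2))=\H^1(\O_C(3))$, and the no-torsion hypothesis is what guarantees that some generating section of $\O_Z(1)$ does lift (a splitting of the sub-extension $0\to\O_C(2)\to\G'\to\O_{Z'}\to0$ attached to a subscheme $Z'\subseteq Z$ would exhibit zero-dimensional torsion in $\G$). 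Combined with $\O(2)\to\O_C(2)\hookrightarrow\G$ this produces a surjection $\O(-1)\oplus\O(2)\to\G$; let $\mathcal K$ be its kernel. Then $\mathcal K$ is reflexive of rank $2$ on $\P^2$, hence locally free, with $c_1(\mathcal K)=-5$ and, from the Hilbert polynomial $(m-1)^2$ and Riemann--Roch, $c_2(\mathcal K)=6$. As $c_1^2=25>24=4c_2$, $\mathcal K$ is Bogomolov-unstable, so its maximal destabilising line subbundle is $\O(a)$ with $a\ge-2$; conversely an inclusion $\O(a)\hookrightarrow\mathcal K\subset\O(-1)\oplus\O(2)$ with $a\ge-1$, composed with the surjection onto $\G$, would either make the chosen morphism $\O(-1)\to\G$ factor through $\O_C(2)$ (contradicting surjectivity) or force a non-zero form of degree $\le3$ to be divisible by $f_C$ (absurd). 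Hence $a=-2$, the quotient $\mathcal K/\O(-2)$ is torsion-free of rank one with Chern classes forcing it to be $\O(-3)$, and $\Ext^1(\O(-3),\O(-2))=\H^1(\O(1))=0$ yields $\mathcal K\isom\O(-3)\oplus\O(-2)$. This is the resolution, with $\f=\left[\ba{cc} q&\ell\\ g&h\ea\right]$ as in the statement.

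\emph{Conditions on $\f$ and closing the cycle.} In this resolution the $\O(2)$-summand of the middle term maps onto $\O_C(2)\subset\G$, so $\Im(\O(2)\to\G)=\O(2)/(\O(2)\cap\Im(\f))$; computing $\O(2)\cap\Im(\f)$ from the syzygies of $(q,\ell)$ — namely the principal ideal generated by $\det(\f)/\gcd(q,\ell)$, a form of degree $6-\delta$ where $\delta=\deg\gcd(q,\ell)$ — shows that this image is $\O_{C'}(2)$ with $\deg C'=6-\delta$. Since it equals $\O_C(2)$ with $C$ a sextic, $\delta=0$, i.e. $\ell\neq0$ and ($\ell$ being linear) $\ell\nmid q$. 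Conversely, given the resolution with $\ell\neq0$ and $\ell\nmid q$ one has $\gcd(q,\ell)=1$, so the same syzygy computation recovers $0\to\O_C(2)\to\G\to\O_Z\to0$ with $C=\{\det(\f)=0\}$ and $\deg Z=2$, while twisting the resolution by $\O(-2)$ gives $\h^0(\G(-2))=1$. That $\Coker(\f)$ is semi-stable — hence a point of $\M(6,5)$ without zero-dimensional torsion — then follows by the diagram-chase of \ref{5.1} and \ref{5.6}: a destabilising subsheaf may be taken semi-stable, has multiplicity at most $5$ and slope at least $1$, so has one of finitely many locally free resolutions (from the classification of $\M(r,\chi)$, $r\le5$, in \cite{drezet-maican} and \cite{mult_five}), and fitting any of them into a commutative square with the resolution of $\G$ forces $\ell=0$ or $\ell\mid q$ — a contradiction. (When $C$ is integral this shortcuts: $\O_C(2)$ is then stable with no subsheaf of multiplicity $<6$, so the only subsheaves of $\G$ of slope $\ge5/6$ are preimages of subsheaves of $\O_Z$, all of slope $\le5/6$.) Finally, an extension $0\to\O_C(2)\to\G\to\O_Z\to0$ without zero-dimensional torsion is, by the above, of the form (Q), which closes the cycle.

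\emph{Expected main obstacle.} The delicate point is the lifting in the second paragraph: showing that ``$\G$ has no zero-dimensional torsion'' genuinely forces the existence of a morphism $\O(-1)\to\G$ for which $\O(-1)\oplus\O(2)\to\G$ is surjective, rather than only a morphism whose image lands inside $\O_C(2)$; equivalently, that the extension class, paired against $\Hom(\O(-1),\O_Z)$, leaves room for a generating section. The remaining steps are bookkeeping with Hilbert polynomials, Chern classes and syzygies, entirely parallel to \ref{6.1} and to \cite{mult_five}.
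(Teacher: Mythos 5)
Your overall architecture (a cycle of implications) is sound, and several pieces are correct, but the step you yourself flag as ``the expected main obstacle'' is a genuine gap, and it is harder in your setup than you suggest. To build the surjection $\O(-1)\oplus\O(2)\to\G$ you must find a morphism $\O(-1)\to\G$ whose composition with $\G\to\O_Z$ is surjective. The obstruction space is $\Ext^1(\O(-1),\O_C(2))=\H^1(\O_C(3))$, which is \emph{one-dimensional}, not zero; so the linear map $\Hom(\O(-1),\O_Z)\cong\C^2\to\Ext^1(\O(-1),\O_C(2))\cong\C$ cutting out the liftable sections can a priori have as kernel exactly the line of non-surjective maps (those factoring through a length-one submodule $\C_x\subset\O_Z$). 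Your one-line justification -- that a split sub-extension would produce torsion -- only shows that the class of each sub-extension $0\to\O_C(2)\to\G_x\to\C_x\to0$ is non-zero; to conclude that the functional $s\mapsto s^*[\G_x]$ is non-zero on $\Hom(\O(-1),\C_x)$ you additionally need non-degeneracy of the Yoneda pairing $\Hom(\O(-1),\C_x)\times\Ext^1(\C_x,\O_C(2))\to\Ext^1(\O(-1),\O_C(2))$, including at singular points of $C$ and when $Z$ is a double point. This is plausibly true but is exactly the content you have not supplied. Similarly, your claim in the first paragraph that a length-two quotient ``is $\O_Z$ for a length-two $Z$'' silently excludes $\C_x\oplus\C_x$, which also needs the no-torsion hypothesis plus $\dim\Ext^1(\C_x,\O_C(2))=1$.

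The paper sidesteps all of this. It filters the length-two quotient by two length-one pieces, applies Proposition \ref{6.1} to the preimage $\G'$ of the first piece (so the delicate length-one lifting is already packaged there), and then runs the horseshoe lemma on $0\to\G'\to\G\to\C_y\to0$: the relevant obstruction group is now $\H^1(\G'(1))$, which \emph{vanishes} (compute from the resolution of $\G'$), so the lift is free; two cancellations ($\O(-3)$ via non-splitness, $\O(-2)$ via semi-stability) give the resolution with no Bogomolov-instability analysis of the kernel bundle needed. For the converse (semi-stability of $\Coker(\f)$), the paper does not enumerate resolutions of destabilising subsheaves as you propose -- which would be a long case analysis you have not carried out -- but uses the clean slope estimate via lemma 6.7 of \cite{maican}, valid for arbitrary (not necessarily integral) $C$: for any subsheaf $\G'$ one gets $\pp(\G')\le\frac{1-d}{2}+\frac{2}{6-d}<\frac56$. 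I would recommend adopting both of these devices; as written, your proposal leaves its central lifting step and its semi-stability step unproved.
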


\begin{proof}
Assume that $\G$ gives a point in $\M(6,5)$ and satisfies the condition $\h^0(\G(-2)) >0$.
As in the proof of 2.1.3 \cite{drezet-maican}, there is an injective morphism $\O_C \to \G(-2)$,
where $C \subset \P^2$ is a curve. Clearly $C$ has degree $6$: if not, $\O_C$ would destabilise $\G(-2)$.
We obtain an extension
\[
0 \lra \O_C(2) \lra \G \lra \CC \lra 0,
\]
where $\CC$ is a sheaf with support of dimension zero and length $2$.
It is clear that $\CC$ is an extension of $\O_{\P^2}$-modules of the form
\[
0 \lra \C_x \lra \CC \lra \C_y \lra 0,
\]
where $\C_x$ and $\C_y$ are the structure sheaves of two points.
Let $\G'$ be the preimage of $\C_x$ in $\G$.
This subsheaf has no zero-dimensional torsion and is an extension of $\C_x$ by $\O_C(2)$
hence, in view of \ref{6.1}, it has a resolution of the form
\[
0 \lra 2\O(-3) \lra \O(-2) \oplus \O(2) \lra \G' \lra 0.
\]
Using the horseshoe lemma, we construct a resolution of $\G$ from the above resolution
of $\G'$ and from the resolution
\[
0 \lra \O(-3) \lra 2\O(-2) \lra \O(-1) \lra \C_y \lra 0.
\]
We obtain a resolution of the form
\[
0 \lra \O(-3) \lra 2\O(-3) \oplus 2\O(-2) \lra \O(-2) \oplus \O(-1) \oplus \O(2) \lra \G \lra 0.
\]
If the morphism $\O(-3) \to 2\O(-3)$ were zero, then it could be shown, as in the proof
of 2.3.2 \cite{mult_five}, that $\C_y$ would be a direct summand of $\G$.
This would contradict our hypothesis. Thus we may cancel $\O(-3)$ to get the resolution
\[
0 \lra \O(-3) \oplus 2\O(-2) \lra \O(-2) \oplus \O(-1) \oplus \O(2) \lra \G \lra 0.
\]
If the morphism $2\O(-2) \to \O(-2)$ were zero, then $\G$ would have a destabilising quotient
sheaf of the form $\O_L(-2)$, for a line $L \subset \P^2$. Thus we may cancel $\O(-2)$
to get the resolution from the proposition.
The conditions on $\ell$ and $q$ follow from the semi-stability of $\G$.

\medskip

\noi
Assume now that $\G$ has a resolution as in the proposition.
$\G$ has no zero-dimensional torsion because it has projective dimension $1$
at every point in its support. Let $Z \subset \P^2$ be the subscheme given by the ideal
$(q, \ell)$ and let $\I_Z \subset \O$ be its ideal sheaf.
Put $f=qh-\ell g$ and let $C$ be the sextic curve with equation $f=0$.
We apply the snake lemma to the commutative diagram with exact rows
\[
\xymatrix
{
0 \ar[r] & \O(-4) \ar[d]^-{f} \ar[r]^-{\scriptsize \left[ \!\! \ba{c} -\ell \\ \phantom{-}q \ea \!\! \right]}
& \O(-3) \oplus \O(-2) \ar[r] \ar[d]^-{\f} & \I_Z(-1) \ar[r] \ar[d] & 0 \\
0 \ar[r] & \O(2) \ar[r]^-{i} & \O(-1) \oplus \O(2) \ar[r]^-{p} & \O(-1) \ar[r] & 0
}.
\]
Here $i$ is the inclusion into the second direct summand and $p$ is the projection
onto the first direct summand.
We deduce that $\G$ is an extension of $\O_Z$ by $\O_C(2)$.

\medskip

\noi
Assume that $\G$ is an extension of $\O_Z$ by $\O_C(2)$ and that it has no zero-dimensional
torsion. Our aim is to show that $\G$ is semi-stable. Let $\G' \subset \G$ be a subsheaf;
denote by $\CC'$ its image in $\O_Z$ and put $\K = \G' \cap \O_C(2)$.
By \cite{maican}, lemma 6.7, there is a twisted ideal sheaf $\A \subset \O_C(2)$
containing $\K$ such that $\A/\K$ is supported on finitely many points
and $\O_C(2)/\A \isom \O_S(2)$ for a curve $S \subset \P^2$ of degree $d$.
We may assume that $1 \le d \le 5$.
We can now estimate the slope of $\G'$ as in the proof of 3.1.2(ii) \cite{mult_five}:
\begin{align*}
\PP_{\G'}(t) & = \PP_{\K}(t) + \h^0(\CC') \\
& = \PP_{\A}(t) - \h^0(\A/\K) + \h^0(\CC') \\
& = \PP_{\O_C}(t+2) - \PP_{\O_S}(t+2) - \h^0(\A/\K) + \h^0(\CC') \\
& = (6-d)t + \frac{(d-1)(d-6)}{2} - \h^0(\A/\K) + \h^0(\CC'), \\
\pp(\G') & = \frac{1-d}{2} + \frac{\h^0(\CC') - \h^0(\A/\K)}{6-d}
\le \frac{1-d}{2} + \frac{2}{6-d} < \frac{5}{6} = \pp(\G).
\end{align*}
We conclude that $\G$ is semi-stable, i.e. it gives a point in $\M(6,5)$.
\end{proof}

\begin{prop}
\label{6.3}
The sheaves $\F$ in $\M(6,1)$ satisfying the condition $\h^1(\F(1)) > 0$
are precisely the sheaves with resolution of the form
\[
0 \lra \O(-4) \oplus \O(-1) \stackrel{\f}{\lra} \O \oplus \O(1) \lra \F \lra 0,
\]
\[
\f= \left[
\ba{cc}
h & \ell \\
g & q
\ea
\right],
\]
where $\ell \neq 0$ and $\ell$ does not divide $q$. These are precisely the twisted
ideal sheaves $\J_Z(2)$, where $Z \subset \P^2$ is a zero-dimensional scheme of length $2$
contained in a sextic curve $C$ and $\J_Z \subset \O_C$ is its ideal sheaf.
\end{prop}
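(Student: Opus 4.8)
The plan is to derive this proposition from \ref{6.2} by means of the duality isomorphism $\M(6,1) \stackrel{\isom}{\lra} \M(6,5)$, $\F \mapsto \G = \F^\D(1)$, recalled in the introduction, whose inverse is $\G \mapsto \G^\D(1)$. First I would translate the cohomological hypothesis into one on $\G$. Since $\F$ is semi-stable, hence pure of dimension one, we have ${\mathcal Hom}(\F,\omega_{\P^2})=0$ and ${\mathcal Ext}^2(\F,\omega_{\P^2})=0$, so the local-to-global spectral sequence gives $\Ext^1(\F(1),\omega_{\P^2}) \isom \H^0\big({\mathcal Ext}^1(\F,\omega_{\P^2})(-1)\big)$; combining this with Serre duality,
\[
\h^1(\F(1)) = \dim \Ext^1(\F(1),\omega_{\P^2}) = \h^0\big(\F^\D(-1)\big) = \h^0\big(\G(-2)\big).
\]
Thus the condition $\h^1(\F(1))>0$ is exactly the hypothesis $\h^0(\G(-2))>0$ of proposition \ref{6.2}.

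Next I would pass between the two resolutions by dualizing. For the ``only if'' part, apply \ref{6.2} to obtain $0 \lra \O(-3) \oplus \O(-2) \stackrel{\psi}{\lra} \O(-1) \oplus \O(2) \lra \G \lra 0$ with $\psi = \left[\begin{smallmatrix} q & \ell \\ g & h \end{smallmatrix}\right]$, where $\ell \neq 0$ and $\ell$ does not divide $q$; then apply ${\mathcal Hom}(-,\omega_{\P^2})$ and twist by $\O(1)$. Because the outer terms are direct sums of line bundles and $\G$ has projective dimension one, this yields a two-term resolution $0 \lra \O(-4) \oplus \O(-1) \stackrel{\f}{\lra} \O \oplus \O(1) \lra \G^\D(1) = \F \lra 0$ whose matrix, after permuting the summands, equals $\left[\begin{smallmatrix} h & \ell \\ g & q \end{smallmatrix}\right]$, and the conditions on $\ell$ and $q$ are carried over verbatim. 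For the ``if'' part, given $\F$ with such a resolution I would run the same procedure in reverse: ${\mathcal Hom}(-,\omega_{\P^2})$ followed by a twist by $\O(1)$ turns it into a resolution of $\G = \F^\D(1)$ of precisely the shape in \ref{6.2}, so $\G$ gives a point of $\M(6,5)$, hence $\F = \G^\D(1)$ gives a point of $\M(6,1)$.

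Finally, for the geometric description I would dualize, not the resolution, but the extension $0 \lra \O_C(2) \lra \G \lra \O_Z \lra 0$ furnished by \ref{6.2}, where $Z = V(\ell,q)$ has length two and satisfies $Z \subset C$ because $\det(\f) = qh - \ell g \in (\ell,q)$. From adjunction, $\omega_C \isom \O_C(3)$, hence $(\O_C(2))^\D \isom \O_C(1)$ and $(\O_C(2))^\D(1) \isom \O_C(2)$; and $\O_Z$ is Gorenstein, being a complete intersection of $\ell$ and $q$, so ${\mathcal Ext}^2(\O_Z,\omega_{\P^2}) \isom \O_Z$ while the lower ${\mathcal Ext}$-sheaves of $\O_Z$ vanish. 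Applying ${\mathcal Hom}(-,\omega_{\P^2})$ to the extension and twisting by $\O(1)$ then gives $0 \lra \F \lra \O_C(2) \lra \O_Z \lra 0$, in which the surjection is, up to an automorphism of $\O_Z$, the canonical restriction, whence $\F \isom \J_Z(2)$; the converse is clear. The main point requiring care will be the twist bookkeeping in the two dualizations and the verification that this last map really is the restriction, so that its kernel is the twisted ideal sheaf $\J_Z(2)$; granting proposition \ref{6.2}, the rest is formal.
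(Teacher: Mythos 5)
Your argument is correct, and its first half coincides with the paper's: the paper likewise obtains the resolution by dualizing proposition \ref{6.2} (your Serre-duality identity $\h^1(\F(1))=\h^0(\G(-2))$ and the transposition of $\psi$ are exactly what is compressed into the phrase ``follows from \ref{6.2} by duality''). Where you genuinely diverge is in the identification $\F\isom\J_Z(2)$. The paper stays with the resolution of $\F$ itself: the restriction of $\f$ to the summand $\O(-1)$ is a Koszul map whose cokernel is $\I_Z(2)$, where $Z$ is the length-two scheme defined by the ideal $(\ell,q)$; hence $\F$ is the cokernel of the induced injection $\O(-4)\to\I_Z(2)$, and the composite inclusion $\O(-4)\subset\I_Z(2)\subset\O(2)$ cuts out the sextic $hq-\ell g=0$, giving $\F\isom\J_Z(2)$ immediately. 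You instead dualize the extension $0\to\O_C(2)\to\G\to\O_Z\to 0$ of \ref{6.2}, which costs you adjunction ($\omega_C\isom\O_C(3)$), the Gorenstein property of the complete intersection $Z$ to compute ${\mathcal Ext}^2(\O_Z,\omega_{\P^2})\isom\O_Z$, and the point you rightly flag about whether the connecting surjection $\O_C(2)\to\O_Z$ is the restriction map. That last worry is in fact harmless: any surjection from $\O_C(2)$ onto a length-two sheaf has kernel $\J_{Z'}(2)$ for some length-two subscheme $Z'\subset C$, which is all the statement requires. Still, the paper's Koszul computation avoids the issue altogether and is the more economical route; both proofs are complete modulo the same input, namely proposition \ref{6.2}.
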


\begin{proof}
The first statement follows from proposition \ref{6.2} by duality.
To prove the second statement we notice that the restriction of $\f$ to $\O(-1)$
has cokernel $\I_Z(2)$, where $Z \subset \P^2$ is the subscheme given by the ideal
$(\ell, q)$ and $\I_Z \subset \O_{\P^2}$ is its ideal sheaf.
Thus $\F$ is the cokernel of the induced injective morphism $\O(-4) \to \I_Z(2)$.
The sextic curve defined by the inclusion $\O(-4) \subset \I_Z(2) \subset \O(2)$
has equation $hq - \ell g =0$ and it is clear that $\F \isom \J_Z(2)$.
\end{proof}

\noi
Let $\W_5 = \Hom(\O(-4) \oplus \O(-1), \O \oplus \O(1))$ and let $W_5 \subset \W_5$ be the subset
of morphisms $\f$ from proposition \ref{6.3}. The linear algebraic group
\[
G_5 = (\Aut(\O(-4) \oplus \O(-1)) \times \Aut(\O \oplus \O(1)))/\C^*
\]
acts on $\W_5$ by conjugation; $W_5$ is open and invariant in $\W_5$.
Let $X_5 \subset \M(6,1)$ be the locally closed subset of isomorphism classes
of cokernels of morphisms $\f \in W_5$.

\begin{prop}
\label{6.4}
There is a geometric quotient $W_5/G_5$, which is a smooth projective variety.
$W_5/G_5$ is isomorphic to the Hilbert flag scheme of sextic curves in $\P^2$ containing
zero-dimensional subschemes of length $2$.
\end{prop}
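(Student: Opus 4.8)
The plan is to build $W_5/G_5$ together with an explicit isomorphism onto the Hilbert flag scheme $\operatorname{Fl}$ of pairs $(Z,C)$, where $Z \subset \P^2$ is a zero-dimensional subscheme of length $2$, $C \subset \P^2$ is a sextic curve, and $Z \subset C$; regard $\operatorname{Fl}$ as a closed subscheme of $\operatorname{Hilb}^2(\P^2) \times \P^{27}$, with $\P^{27} = |\O_{\P^2}(6)|$. First I would record the geometry of $\operatorname{Fl}$: a length-$2$ scheme imposes two independent linear conditions on forms of every degree $\ge 1$, so $Z \mapsto \H^0(\I_Z(k))$ is a vector bundle of rank $\binom{k+2}{2}-2$ on $\operatorname{Hilb}^2(\P^2)$; in particular the first projection exhibits $\operatorname{Fl}$ as a $\P^{25}$-bundle over $\operatorname{Hilb}^2(\P^2)$. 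Since $\operatorname{Hilb}^2(\P^2)$ is smooth projective of dimension $4$, the scheme $\operatorname{Fl}$ is smooth projective of dimension $29 = \dim \W_5 - \dim G_5 = 45 - 16$.

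Next I would introduce the classifying morphism $\Phi \colon W_5 \to \operatorname{Hilb}^2(\P^2) \times \P^{27}$ sending the morphism $\f$ of \ref{6.3}, with entries $h, \ell, g, q$, to the pair $(V(\ell,q),\, V(\det \f))$. The form $\det \f = hq - \ell g$ is nowhere zero on $W_5$: if it vanished, then $\ell \mid h$ (as $\ell$ is irreducible and $\ell \nmid q$), which makes $\f$ factor through $\O(-1)$ and hence fail to be injective, contradicting $\f \in W_5$; so $V(\det\f)$ is a genuine sextic. Likewise $V(\ell,q)$ has length $2$ because $\ell \nmid q$, and since the Koszul complex of $(\ell,q)$ is a locally free resolution of $\O_{V(\ell,q)}$, that subscheme moves in a flat family over $W_5$; thus $\Phi$ is a morphism, and its image lies in $\operatorname{Fl}$ because $\det\f \in (\ell,q)$. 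From the $G_5$-action one reads off that $\ell$ and $\det\f$ are only rescaled and that $q$ is only changed by a scalar and a multiple of $\ell$, so $\Phi$ is $G_5$-invariant. That $\Phi$ is surjective follows by reversing the construction: given $(Z,C)$, pick $\ell$ spanning the line $\H^0(\I_Z(1))$ and $q \in \H^0(\I_Z(2))$ with $\ell \nmid q$; since $(\ell,q) = \I_Z$ is a complete intersection, hence saturated, an equation $f$ of $C$ lies in $(\ell,q)$ and may be written $f = hq - \ell g$, and the resulting $\f$ lies in $W_5$ over $(Z,C)$. Finally a short computation using $\gcd(\ell,q)=1$ shows that two elements of $W_5$ with the same image under $\Phi$ differ by an element of $G_5$, so the fibres of $\Phi$ are exactly the $G_5$-orbits.

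Since $\Coker(\f) \isom \J_Z(2)$ is stable by \ref{6.2} (via duality), its automorphism group is $\C^*$, so $G_5$ acts freely on $W_5$ and every fibre of $\Phi$ is isomorphic to $G_5$, hence smooth of dimension $\dim \W_5 - \dim \operatorname{Fl}$. Thus $\Phi$ is a surjective morphism of smooth varieties with equidimensional fibres, hence flat by miracle flatness, hence faithfully flat and universally open; combined with the orbit description, this already realises $\operatorname{Fl}$ as the topological orbit space $W_5/G_5$. To upgrade this to a geometric quotient of schemes it remains to see that $W_5 \to \operatorname{Fl}$ is a locally trivial $G_5$-torsor, and for that I would exhibit regular local sections of $\Phi$: over a suitable open cover of $\operatorname{Fl}$ choose $\ell$ as a local frame of the line bundle $Z \mapsto \H^0(\I_Z(1))$, choose $q$ as a local section of $Z \mapsto \H^0(\I_Z(2))$ avoiding the rank-$3$ subbundle of multiples of $\ell$, and choose $(h,g)$ as a local section of the bundle surjection $(h,g) \mapsto hq - \ell g$ onto $Z \mapsto \H^0(\I_Z(6))$; all of this is possible precisely because the cohomology groups $\H^0(\I_Z(k))$, $k = 1,2,6$, are locally free of the expected ranks. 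Such a section trivialises $\Phi$ locally as the projection $G_5 \times (\text{open in }\operatorname{Fl}) \to (\text{open in }\operatorname{Fl})$, whence $\O_{\operatorname{Fl}} \stackrel{\isom}{\lra} (\Phi_* \O_{W_5})^{G_5}$ and $W_5 \to \operatorname{Fl}$ is a geometric quotient; in particular $W_5/G_5$ exists, is isomorphic to $\operatorname{Fl}$, and is smooth projective.

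I expect the crux to be exactly this final point — promoting the bijection on orbits to a scheme-theoretic geometric quotient — the rest being formal once $\det\f \ne 0$, freeness of the action, and flatness of $\Phi$ are in hand. An alternative closer in spirit to the earlier sections is to quotient $W_5$ in stages: first by the unipotent radical of $G_5$, generated by the shears $h \mapsto h - c\ell,\ g \mapsto g - cq$ (with $c$ a cubic form) and $g \mapsto g + eh,\ q \mapsto q + e\ell$ (with $e$ a linear form), and then by the residual torus, presenting $\operatorname{Fl}$ as a projective bundle over $\operatorname{Hilb}^2(\P^2)$ in the manner of \ref{4.2} and \ref{5.4}; there too the key input is the local freeness of the sheaves $Z \mapsto \H^0(\I_Z(k))$, which is what makes the staged quotients exist.
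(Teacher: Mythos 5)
Your argument is correct and complete. The paper's own ``proof'' is a one-line reference to the analogous statement 2.2.5 of \cite{mult_five} (the quintic case), where the quotient is likewise identified with the flag Hilbert scheme realised as a projective bundle over the Hilbert scheme of length-$2$ subschemes; your classifying map $\f \mapsto (V(\ell,q),\, V(\det\f))$, the verification that the fibres are free $G_5$-orbits via $\gcd(\ell,q)=1$ and the stability of $\J_Z(2)$, and the local sections built from the locally free sheaves $Z \mapsto \H^0(\I_Z(k))$ supply exactly the details that citation elides. Your staged-quotient sketch at the end is, in substance, one of the alternative constructions alluded to there, so nothing further is needed.
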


\begin{proof}
The argument is entirely analogous to the argument at 2.2.5 \cite{mult_five},
where we gave three constructions for the quotient.
\end{proof}

\begin{prop}
\label{6.5}
$W_5/G_5$ is isomorphic to $X_5$. In particular, $X_5$ is a smooth closed
subvariety of $\M(6,1)$ of codimension $8$.
\end{prop}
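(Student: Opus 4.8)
The plan is to follow the pattern used for Propositions \ref{4.4}, \ref{5.4} and \ref{5.7}. The canonical morphism $\r \colon W_5 \to X_5$ sending $\f$ to the isomorphism class of $\Coker(\f)$ has the $G_5$-orbits as fibres. First I would prove that $\r$ is a categorical quotient map; then, since Proposition \ref{6.4} already exhibits a geometric quotient $W_5/G_5$, which is in particular a categorical quotient, the uniqueness of the categorical quotient forces the canonical bijective morphism $W_5/G_5 \to X_5$ to be an isomorphism.

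To prove that $\r$ is a categorical quotient map I would use the method of 3.1.6 \cite{drezet-maican}: I would show that for a sheaf $\F$ giving a point in $X_5$ the resolution of Proposition \ref{6.3} is obtained in a way functorial in flat families from cohomological data attached to $\F$. It is cleaner to argue with the dual sheaf $\G = \F^\D(1)$, which gives a point in $\M(6,5)$ and satisfies the dual condition $\h^0(\G(-2)) > 0$. As in the proof of Proposition \ref{6.2}, a nonzero section of $\G(-2)$ produces an injection $\O_C(2) \hookrightarrow \G$, with $C \subset \P^2$ a sextic curve, whose quotient $\CC$ has length $2$; applying the horseshoe lemma to a filtration $0 \to \C_x \to \CC \to \C_y \to 0$, to the resolution of the preimage of $\C_x$ in $\G$ furnished by Proposition \ref{6.1}, and to the standard resolution of $\C_y$, and then cancelling the redundant $\O(-3)$ and $\O(-2)$ summands, reproduces the resolution of Proposition \ref{6.2}; dualising via lemma 3 of \cite{maican-duality} yields the resolution of Proposition \ref{6.3}. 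The only ambiguities in this construction — the splitting of $\CC$, the liftings in the horseshoe lemma, and the cancellations — are choices of trivialisations of the fixed bundles $\O(-4) \oplus \O(-1)$ and $\O \oplus \O(1)$, hence are absorbed by the $G_5$-action; thus for a flat family of sheaves in $X_5$ over a base $S$, after an \'etale base change, the construction yields a morphism $S \to W_5/G_5$ lying over $\r$, which is exactly what the argument of 3.1.6 \cite{drezet-maican} requires in order to conclude that $\r$ is a categorical quotient map.

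Granting the isomorphism $W_5/G_5 \isom X_5$, the remaining assertions follow at once. By Proposition \ref{6.3}, $X_5$ is the set of $\F$ in $\M(6,1)$ with $\h^1(\F(1)) > 0$, a closed condition by upper semi-continuity, so $X_5$ is closed; and, being isomorphic to $W_5/G_5$, it inherits from Proposition \ref{6.4} the structure of a smooth irreducible projective variety, namely the Hilbert flag scheme of sextic curves in $\P^2$ containing zero-dimensional subschemes of length $2$. That flag scheme is a $\P^{25}$-bundle over the $4$-dimensional Hilbert scheme $\operatorname{Hilb}^2(\P^2)$ of length $2$ subschemes — for a fixed $Z \in \operatorname{Hilb}^2(\P^2)$ the sextics through $Z$ form a linear subsystem of codimension $2$ in $\P(\SS^6 V^*) = \P^{27}$, hence a $\P^{25}$ — so it has dimension $25 + 4 = 29$. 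Since $\M(6,1)$ is irreducible of dimension $37$ by \cite{lepotier}, $X_5$ has codimension $8$.

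I expect the main obstacle to be the verification that the reconstruction of resolution \ref{6.3} is genuinely natural in families. The delicate point is that the length $1$ subsheaf $\C_x \subset \CC$ is not canonical — it is far from unique when $\CC$ is a sum of two distinct skyscraper sheaves — so one must carry out the construction after an \'etale base change and check, as in 3.1.6 \cite{drezet-maican}, that the remaining freedom is precisely that of $G_5$; one should also record that the relevant cohomology, $\h^0(\G(-2))$ (equivalently $\h^1(\F(1))$), has constant dimension along $X_5$, so that the construction is uniform. The remaining steps — the semi-continuity argument and the dimension count — are routine.
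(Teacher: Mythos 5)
Your overall strategy --- prove that $\r \colon W_5 \to X_5$ is a categorical quotient map and invoke uniqueness --- is the route the paper takes for the strata $X_1,\ldots,X_4$, but for $X_5$ the paper does something different, and the difference matters precisely at the point you yourself flag as the main obstacle. The paper shows instead that the inverse of the bijective morphism $\upsilon \colon W_5/G_5 \isom \HH \to X_5$ is a morphism: by the universal property of the good quotient $\pi \colon S \to X_5$ coming from the Quot-scheme construction of the moduli space, it suffices to produce the pair $(C,Z) \in \HH$ --- not the full resolution --- naturally from the Beilinson tableau of the family. And $(C,Z)$ \emph{is} canonical: $\O_Z \isom \CC = \Ker(\f_2)/\Im(\f_1) \isom \G/\O_C(2)$ is intrinsically defined, and $C$ is recovered as the Fitting support of $\G' = \Coker(\f_5)$. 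This choice of target sidesteps entirely the non-canonical choice of a length-one subsheaf $\C_x \subset \CC$ that your reconstruction of resolution \ref{6.3} (via the proof of \ref{6.2} and two applications of the horseshoe lemma) requires.

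That choice is the one genuine gap in your argument. Over the locus where $Z$ consists of two distinct points, the candidate subsheaves $\C_x$ are permuted by monodromy, so a length-one subsheaf cannot in general be chosen Zariski-locally in a flat family; you would have to pass to an \'etale double cover, run the construction there, verify that the two resulting local lifts to $W_5$ differ pointwise by elements of $G_5$ (which in turn requires knowing that the fibres of $\r$ are exactly $G_5$-orbits, i.e.\ an $\Ext$-vanishing argument showing that two morphisms in $W_5$ with isomorphic cokernels are conjugate), and then descend the composite with an arbitrary invariant morphism back down. None of this is impossible, but it is exactly the work you defer with ``absorbed by the $G_5$-action,'' and the paper's decision to construct the map $X_5 \to \HH$ rather than a local lift to $W_5$ is what makes it unnecessary. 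Your closing remarks are fine: $X_5$ is closed because, by \ref{6.3}, it is cut out by the semicontinuous condition $\h^1(\F(1))>0$, and the dimension count ($\HH$ is a $\P^{25}$-bundle over the four-dimensional Hilbert scheme of length-two subschemes, so $\dim X_5 = 29$ and the codimension is $37-29=8$) is correct and more explicit than what the paper records.
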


\begin{proof}
The canonical morphism $\r \colon W_5 \to X_5$ mapping $\f$ to the isomorphism class
of $\Coker(\f)$ determines a bijective morphism $\upsilon \colon W_5/G_5 \to X_5$.
Let $\HH$ be the Hilbert flag scheme of \ref{6.4}. Under the isomorphism $\HH \isom W_5/G_5$,
$\upsilon$ maps a point $(C,Z) \in \HH$ to $\J_Z(2)$, where $\J_Z \subset \O_C$ is the ideal sheaf
of $Z$ in $C$. Our aim is to show that $\upsilon^{-1}$ is also a morphism.
For this consider the good quotient $\pi \colon S \to X_5$ of 2.3.2 \cite{drezet-maican}.
In view of the universal property of a good quotient,
it is sufficient to show that $\upsilon^{-1} \circ \pi \colon S \to \HH$ is a morphism of varieties.
For this consider the $S$-flat family $\tilda{\F}_S$ on $\P^2 \times S$ defined at 2.3.3 \cite{drezet-maican}.
Let $p \colon \P^2 \times S \to S$ be the projection onto the second factor.
$\tilda{\F}_S$ satisfies the hypothesis of loc.cit., hence all higher direct image sheaves
$\operatorname{R}^j_{p_*}(\tilda{\F}_S \tensor \Om^{-i}(-i))$ are locally free on $S$
and, moreover, for any closed point $s \in S$, the restriction of the Beilinson tableau of $\tilda{\F}_S$
to a fibre $\P^2 \times \{ s\}$ is the Beilinson tableau (2.2.3) \cite{drezet-maican} for $\tilda{\F}_{S,s}$,
which we denote $\EE^1(\tilda{\F}_{S,s})$.
It remains to show that $(C,Z)$ can be obtained in a natural manner from $\EE^1(\tilda{\F}_{S,s})$,
if $\tilda{\F}_{S,s} \isom \J_Z(2) \subset \O_C(2)$.
In other words, given a sheaf $\F \isom \J_Z(2) \subset \O_C(2)$ in $X_5$,
we need to construct $(C,Z)$ starting from $\EE^1(\F)$ and performing algebraic operations.
By duality, given an extension
\[
0 \lra \O_C(2) \lra \G \lra \O_Z \lra 0
\]
as at \ref{6.2}, we need to obtain $(C,Z)$ in a natural manner from $\EE^1(\G)$.
The tableau for $\EE^1(\G)$ reads
\[
\xymatrix
{
4\O(-2) \ar[r]^-{\f_1} & 4\O(-1) \ar[r]^-{\f_2} & \O \\
3\O(-2) \ar[r]^-{\f_3} & 8\O(-1) \ar[r]^-{\f_4} & 6\O
}.
\]
We have $\Ker(\f_2) \isom \Om^1 \oplus \O(-1)$ because $\f_2$ is surjective.
Denote $\CC = \Ker(\f_2)/\Im(\f_1)$. Consider the Euler sequence on $\P^2$:
\[
0 \lra \O(-3) \lra 3\O(-2) \stackrel{\pi}{\lra} \Om^1 \lra 0.
\]
Clearly the corestriction $4\O(-2) \to \Om^1 \oplus \O(-1)$ of $\f_1$ factors through the morphism
\[
(\pi, id) \colon 3\O(-2) \oplus \O(-1) \to \Om^1 \oplus \O(-1).
\]
We arrive at an exact sequence 
\[
0 \lra \Ker(\f_1) \lra \O(-3) \oplus 4\O(-2) \stackrel{\eta}{\lra} 3\O(-2) \oplus \O(-1) \lra \CC \lra 0.
\]
As at 2.1.4 \cite{mult_five}, it can be shown that $\rank(\eta_{12})=3$. Canceling $3\O(-2)$ we get
an exact sequence
\[
0 \lra \Ker(\f_1) \lra \O(-3) \oplus \O(-2) \stackrel{\psi}{\lra} \O(-1) \lra \CC \lra 0.
\]
We cannot have $\CC \isom \O_Y(-1)$ for $Y \subset \P^2$ a line or a conic curve, otherwise $\CC$
would destabilise $\G$. It follows that $\CC$ is the structure sheaf of a zero-dimensional subscheme
in $\P^2$ of length $2$ and that $\Ker(\f_1) \isom \O(-4)$.
The exact sequence (2.2.5) \cite{drezet-maican} has the form
\[
0 \lra \O(-4) \stackrel{\f_5 \, }{\lra} \Coker(\f_4) \lra \G \lra \CC \lra 0.
\]
Denote $\G'= \Coker(\f_5)$.
From (2.2.4) \cite{drezet-maican} we get the resolution
\[
0 \lra 3\O(-2) \stackrel{\psi'}{\lra} \O(-4) \oplus 8\O(-1) \stackrel{\f'}{\lra} 6\O \lra \G' \lra 0,
\]
\[
\psi' = \left[
\ba{c}
0 \\ \f_3
\ea
\right], \qquad \f' = \left[
\ba{cc}
\f_5' & \f_4
\ea
\right].
\]
Here $\f_5'$ is a lift of $\f_5$.
We have $\h^0(\G')=6$, hence $\H^0(\G')= \H^0(\G)$.
The global sections of $\G$ generate $\O_C(2)$ and $\G'$ is generated by global sections.
Thus $\G'=\O_C(2)$.
The maximal minors of any matrix representing $\f'$ generate the ideal of $C$ because
the Fitting support of $\G'$ is $C$.
It is also clear that $\CC$ is isomorphic to $\O_Z$.

In conclusion, we have obtained the pair $(C,Z) \in \HH$ from $\EE^1(\G)$ by performing
algebraic operations.
\end{proof}

\begin{prop}
\label{6.6}
$X_5$ lies in the closure of $X_3$ and also in the closure of $X_4$.
\end{prop}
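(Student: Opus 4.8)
The plan is to argue by degeneration, in the spirit of the proofs of \ref{5.5} and \ref{5.8}. Since $X_5$ is irreducible and $\overline{X}_3$, $\overline{X}_4$ are closed, it suffices to show that the generic sheaves of $X_5$ lie in $\overline{X}_3$ and in $\overline{X}_4$. By \ref{6.3}, together with the known description of the generic members of $X_5$, such a sheaf is $\F_0 = \J_W(2) \isom \O_C(2)(-P_1-P_2)$, where $C \subset \P^2$ is a smooth sextic curve and $W = \{ P_1, P_2 \}$ consists of two distinct points of $C$; being a line bundle on a smooth curve, $\F_0$ is stable. For each such $\F_0$ I will construct a smooth pointed affine curve $(T, 0)$ and a $T$-flat family of stable sheaves on $\P^2$ whose central member is $\F_0$ and whose members over $T \setminus \{ 0 \}$ lie in $X_3$, respectively $X_4$ (the latter after dualising); the associated morphism $T \to \M(6,1)$ then shows $[\F_0] \in \overline{X}_3$, respectively $[\F_0] \in \overline{X}_4$, and letting $(C, P_1, P_2)$ vary yields the proposition.

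\emph{The case $X_5 \subset \overline{X}_3$.} Fix a third point $P_3 \in C$ with $P_1, P_2, P_3$ distinct and non-colinear, and a non-constant morphism $t \mapsto P_4(t)$ from $T$ to $C$ with $P_4(0) = P_3$ and $P_4(t) \notin \{ P_1, P_2, P_3 \}$ for $t \neq 0$. Let $p \colon C \times T \to C$ be the projection and let $\Gamma \subset C \times T$ be the graph of $t \mapsto P_4(t)$. Then
\[
\tilda\F = p^*\bigl( \O_C(2)(-P_1-P_2) \bigr) \tensor \O_{C \times T}\bigl( \Gamma - (\{ P_3 \} \times T) \bigr)
\]
is a line bundle on $C \times T$, so it is flat over $T$; its fibre over $t \neq 0$ is $\O_C(2)(-P_1-P_2-P_3+P_4(t))$, which gives a point of $X_3$ by \ref{5.3}, and its fibre over $0$ is $\O_C(2)(-P_1-P_2) = \F_0$, since $P_4(0) - P_3 = 0$. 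Hence the classifying morphism $T \to \M(6,1)$ sends $0$ into $X_5$ and $T \setminus \{ 0 \}$ into $X_3$, and therefore $[\F_0] \in \overline{X}_3$.

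\emph{The case $X_5 \subset \overline{X}_4$.} Here I pass to $\M(6,5)$ via the isomorphism $\F \mapsto \G = \F^\D(1)$ of \cite{maican-duality} and show that the generic $\G_0 \in X_5^\D$ lies in $\overline{X}_4^\D$. By \ref{6.2}, $\G_0$ sits in a non-split extension $0 \to \O_C(2) \to \G_0 \to \C_{Q_1} \oplus \C_{Q_2} \to 0$ with $C$ smooth and $Q_1 \neq Q_2$. Let $L$ be the line through $Q_1$ and $Q_2$; for generic data $L$ meets $C$ transversally in six distinct points, so $Z_0 = (L \cap C) \setminus \{ Q_1, Q_2 \}$ is a reduced length-$4$ subscheme of $C$, and multiplication by the equation of $L$ presents $\J_{Z_0}$ as an extension of $\C_{Q_1} \oplus \C_{Q_2}$ by $\O_C(-1)$; comparing this with the previous extension gives $\J_{Z_0}(3) \isom \G_0$. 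Now pick a flat family $\mathcal Z \subset C \times T$ of length-$4$ subschemes of $C$ with $\mathcal Z_0 = Z_0$ and with $\mathcal Z_t$, for $t \neq 0$, four distinct points no three of which are colinear, and let $\mathcal I \subset \O_{C \times T}$ be its ideal sheaf. Then $\tilda\G = \mathcal I \tensor p^*\O_C(3)$ is flat over $T$, because the quotient $\O_{\mathcal Z} \tensor p^*\O_C(3)$ is finite flat over $T$; its fibre over $t \neq 0$ is $\J_{\mathcal Z_t}(3)$, which gives a point of $X_4^\D$ by \ref{5.8}, and its central fibre is $\J_{Z_0}(3) \isom \G_0$, which gives a point of $X_5^\D$ by \ref{6.2}. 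So the classifying morphism $T \to \M(6,5)$ sends $0$ to $[\G_0]$ and $T \setminus \{ 0 \}$ into $X_4^\D$, whence $[\G_0] \in \overline{X}_4^\D$, i.e. $[\F_0] \in \overline{X}_4$.

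The one point that needs care is that the central fibre of each family really equals $\F_0$, respectively $\G_0$: the naive family $\O_C(2)(-P_1-P_2-P_3+P_4(t))$, and the naive family of extensions of $\C_{Q_1} \oplus \C_{Q_2}$ by $\O_C(2)$, both carry an invariant that jumps at $t = 0$, and the remedy is precisely to realise the degeneration as a line bundle on $C \times T$, respectively as a twisted ideal sheaf of a flat family of $0$-dimensional subschemes, for which flatness and the value of the central fibre are transparent. One also has to check, which is automatic for generic initial data, that the members over $T \setminus \{ 0 \}$ satisfy the non-colinearity and general-position hypotheses demanded by \ref{5.3} and \ref{5.8}.
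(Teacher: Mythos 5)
Your proof is correct and follows essentially the same route as the paper: degenerate $\O_C(2)(-P_1-P_2-P_3+P_4)$ by letting $P_4$ tend to $P_3$ to get $X_5\subset\overline{X}_3$, and for $X_5\subset\overline{X}_4$ perturb into general position the four residual points in which the line through the two base points meets $C$ (you do this on the dual side, which is equivalent under $\M(6,1)\isom\M(6,5)$). Your explicit flat families on $C\times T$ simply make precise the convergence statements the paper leaves implicit.
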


\begin{proof}
According to \ref{6.3}, the generic points in $X_5$ are stable-equivalence classes
of sheaves of the form $\O_C(2)(-P_1-P_2)$, where $C \subset \P_2$ is a smooth sextic curve
and $P_1, P_2$ are distinct points on $C$.
Choose points $P_3, P_4$ on $C$ such that $P_1, P_2, P_3$ are non-colinear,
$P_4$ is distinct from them and converges to $P_3$.
According to \ref{5.3}, the sheaf $\O_C(2)(-P_1 -P_2 -P_3 +P_4)$ gives a point in $X_3$.
This point converges to the stable-equivalence class of $\O_C(2)(-P_1 -P_2)$.
Thus $X_5 \subset \overline{X}_3$.

If $P_1$ and $P_2$ are generic enough, then the line they determine meets $C$ at four
other distinct points $Q_1, Q_2, Q_3, Q_4$. Choose points $P_i'$ on $C$ converging to $Q_i$,
$1 \le i \le 4$, such that no three of them are colinear.
According to \ref{5.8}, the sheaf $\O_C(1)(P_1' +P_2' +P_3' +P_4')$ gives a point in $X_4$.
This point converges to the stable-equivalence class of
$\O_C(1)(Q_1 +Q_2 +Q_3 +Q_4) \isom \O_C(2)(-P_1 -P_2)$.
Thus $X_5 \subset \overline{X}_4$.
\end{proof}

%%%%%%%%%%%%%%%%%%%%%%%%%%%%%%%%%%%%%%%%%% section 7

\section{The moduli space is a union of the strata}

\noi
In the final section we shall prove that there are no other sheaves giving points
in $\M(6,1)$ beside the sheaves we have discussed so far.

\begin{prop}
\label{7.1}
There are no sheaves $\F$ giving points in $\M(6,1)$ and satisfying the conditions
$\h^1(\F)= 1$ and $\h^0(\F(-1))=1$.
\end{prop}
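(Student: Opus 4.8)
The plan is to argue directly, exploiting the section furnished by the hypothesis, rather than running a Beilinson spectral sequence. First I would record the numerical consequence: since $\F$ is one-dimensional, $\chi(\F)=\h^0(\F)-\h^1(\F)$, and $\chi(\F)=\PP(0)=1$, so $\h^0(\F)=2$. Next, $\h^0(\F(-1))\neq 0$ provides a non-zero morphism $u\colon\O(1)\to\F$. As $\F$ is semi-stable, it is pure of dimension one; hence its image $\F'=\operatorname{im}(u)$ is a non-zero subsheaf of $\F$, pure of dimension one, and a quotient of $\O(1)$. I would identify $\F'$ as follows: write $\ker(u)=\I_W(1-d)$ with $W\subset\P^2$ zero-dimensional and $d\geq 1$ (one has $d\ge 1$ because $\F'$ has dimension one, not zero); the chain of inclusions $\ker(u)\subset\O(1-d)\subset\O(1)$ gives, via the third isomorphism theorem, an exact sequence
\[
0\lra\O_W\lra\F'\lra\O_D(1)\lra 0,
\]
where $D\subset\P^2$ is an effective divisor of degree $d$. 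Purity of $\F'\subset\F$ forces $\O_W=0$, so $\F'\isom\O_D(1)$; and $\F'\subset\F$ forces $1\le d\le 6$.

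The contradiction then comes from two complementary estimates. If $d=1$, then $\F'\isom\O_L(1)$ for a line $L$, which has slope $2$; since $\F'$ is a proper subsheaf of $\F$ (its multiplicity is $1<6$) and $\pp(\F)=1/6<2$, this violates semi-stability of $\F$. If $d\geq 2$, then $\H^0(\O(1-d))=\H^1(\O(1-d))=0$, so the exact sequence $0\to\O(1-d)\to\O(1)\to\O_D(1)\to 0$ yields $\h^0(\F')=\h^0(\O_D(1))=\h^0(\O(1))=3$; but $\F'\subset\F$ then forces $\h^0(\F)\geq\h^0(\F')=3$, contradicting $\h^0(\F)=2$. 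In either case no such $\F$ exists, which is the assertion.

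The only point that needs a little care is the identification $\F'\isom\O_D(1)$: one must check that purity of $\F'\subset\F$ is exactly what kills the zero-dimensional part $\O_W$ and that the divisorial part of $\ker(u)$ inside $\O(1)$ is a line bundle $\O(1-d)$, so that the quotient is the twisted structure sheaf of a degree-$d$ plane curve; this is routine. (Alternatively, one could dualise and feed $\F^\D(1)\in\M(6,5)$ into the Beilinson spectral sequence, extract a length-one resolution as in the proof of \ref{5.6}, and rule out the possible shapes of $\f$ by semi-stability; the argument above seems considerably shorter, so I would prefer it.)
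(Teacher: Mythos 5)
Your argument is correct, and it is genuinely different from the one in the paper. The paper dualises to $\G=\F^\D(1)\in\M(6,5)$, runs the Beilinson free monad, pins down $m=\h^1(\G\tensor\Om^1(1))=3$ by a rank-zero argument, and then exhibits a destabilising quotient $\CC$ of $\G$ with Hilbert polynomial $t-1$; this keeps the proof in the same formal framework as the classification of all the other strata. You instead use only the section of $\F(-1)$ directly: the image of $\O(1)\to\F$ is identified, via purity of $\F$, as $\O_D(1)$ for a curve $D$ of degree $d$ with $1\le d\le 6$, and then either $d=1$ destabilises $\F$ (slope $2>1/6$) or $d\ge 2$ forces $\h^0(\F)\ge\h^0(\O_D(1))=3$, contradicting $\h^0(\F)=\chi(\F)+\h^1(\F)=2$. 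All the steps check out: the structure $\ker(u)=\I_W(1-d)$ for a rank-one subsheaf of a line bundle on a smooth surface is standard, purity of the semi-stable sheaf $\F$ does kill $\O_W$, and the left-exactness of $\H^0$ gives the inequality you need. Your route is shorter and more elementary, and it has the added virtue of making transparent why the analogous configuration with $\h^1(\F)=2$ (the stratum $X_4$, where $\h^0(\F)=3$) is \emph{not} excluded by the same count; what it does not give you, and what the paper's method is really after throughout, is the explicit locally free resolution data that the monad computation produces as a by-product in the cases where sheaves do exist.
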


\begin{proof}
By duality, we must show that there are no sheaves $\G$ giving points in $\M(6,5)$
and satisfying the conditions $\h^0(\G(-1))=1$, $\h^1(\G)=1$.
Consider a sheaf $\G$ on $\P^2$ with Hilbert polynomial $\PP_{\G}(t)= 6t+5$
and satisfying the above cohomological conditions.
Put $m = \h^1(\G \tensor \Om^1(1))$.
As in the proof of \ref{5.6}, the Beilinson free monad leads to the resolution
\[
0 \lra \O(-2) \lra 2\O(-2) \oplus (m+4)\O(-1) \stackrel{\f}{\lra} \Om^1 \oplus (m-3) \O(-1) \oplus 6\O \lra \G \lra 0.
\]
Here $\f_{12}=0$, $\f_{22} = 0$, hence $\G$ maps surjectively onto the cokernel 
$\CC$ of the morphism $2\O(-2) \to \Om^1 \oplus (m-3)\O(-1)$.
Thus $\CC$ has rank zero, forcing $m=3$. The Hilbert polynomial of $\CC$ is
$\PP(t) = \PP_{\Om^1}(t) - \PP_{2\O(-2)}(t)= t-1$, which shows that $\CC$ is a destabilising
quotient sheaf of $\G$. Thus $\G$ cannot give a point in $\M(6,5)$.
\end{proof}

\begin{prop}
\label{7.2}
There are no sheaves $\F$ giving points in $\M(6,1)$ and satisfying the cohomological conditions
\[
\h^0(\F(-1)) \le 1, \qquad \quad \h^1(\F) \ge 3, \qquad \quad \h^1(\F(1)) =0.
\]
\end{prop}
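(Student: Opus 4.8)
\medskip
\noi
\textbf{Proof proposal.} The plan is to dualise and use the Beilinson monad, just as in the proofs of \ref{5.1}, \ref{5.6} and \ref{7.1}. Write $a=\h^0(\F(-1))\in\{0,1\}$ and $b=\h^1(\F)$, so that $\h^0(\F)=b+1$, $\h^1(\F(-1))=a+5$ and $\h^0(\F(1))=7$; the hypothesis is $b\ge 3$. Passing to $\G=\F^\D(1)$, which gives a point in $\M(6,5)$, the duality of \cite{maican-duality} yields
\[
\h^0(\G(-1))=b\ge 3,\qquad \h^1(\G)=a\le 1,\qquad \h^0(\G(-2))=0,
\]
together with $\h^1(\G(-1))=b+1$ and $\h^0(\G)=5+a$. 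Put $n=\h^1(\G\tensor\Om^1(1))$; as usual $\h^2(\G\tensor\Om^1(1))=0$, so $\h^0(\G\tensor\Om^1(1))=n+4$. It suffices to show that no such $\G$ exists.

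\medskip
\noi
First I would write down the Beilinson free monad of $\G$ and, as in the proof of \ref{5.1} for $a=0$ and of \ref{7.1} for $a=1$, extract a resolution
\[
0\lra b\O(-2)\stackrel{\eta}{\lra}(b+1)\O(-2)\oplus(n+4)\O(-1)\stackrel{\f}{\lra}\B\lra\G\lra 0,\qquad
\eta=\left[\ba{c}0\\\psi\ea\right],\quad\psi\ \text{injective},
\]
where $\B=n\O(-1)\oplus 5\O$ if $a=0$, and $\B=\Om^1\oplus(n-3)\O(-1)\oplus 6\O$ if $a=1$ (this shape presupposes $n\ge 3$; the remaining range $n\le 2$ with $a=1$ is disposed of by the arguments in the proof of 3.1.3 \cite{mult_five}). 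The block of $\f$ from $(n+4)\O(-1)$ into $n\O(-1)$, resp.\ into $\Om^1\oplus(n-3)\O(-1)$, vanishes, so $\G$ maps onto the cokernel $\CC$ of the induced morphism $\f'$ from $(b+1)\O(-2)$ to $n\O(-1)$, resp.\ to $\Om^1\oplus(n-3)\O(-1)$. As $\CC$ is a quotient of the purely one-dimensional $\G$ it has rank zero, so $\f'$ is generically surjective onto its target bundle; hence $n\le b+1$ when $a=0$ and $n\le b+2$ when $a=1$.

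\medskip
\noi
Next I would compute $\h^0(\G(-2))$ from the resolution. Twisting by $\O(-2)$, breaking the four-term sequence into two short exact ones, and using $\h^0(\Om^1(-2))=\h^1(\Om^1(-2))=0$ and $\h^0(k\O(-2))=\h^0(k\O(-3))=\h^1(k\O(-2))=\h^1(k\O(-3))=0$, one gets
\[
\H^0(\G(-2))\isom\H^1(\Coker(\psi)(-2))\isom\Ker\!\big(\H^2(b\O(-4))\lra\H^2((n+4)\O(-3))\big),
\]
a vector space of dimension at least $3b-(n+4)$. Since $\h^0(\G(-2))=0$ this forces $n\ge 3b-4$. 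Combined with the bound of the previous step,
\[
3b-4\ \le\ n\ \le\ b+1+a\ \le\ b+2,
\]
whence $2b\le 6$, i.e.\ $b=3$, and then necessarily $a=1$ and $n=5$. In that last case $\f'\colon 4\O(-2)\to\Om^1\oplus 2\O(-1)$ is a map of bundles of equal rank $4$ with torsion cokernel, hence injective, and
\[
\PP_{\CC}(t)=\chi(\Om^1(t))+2\chi(\O(t-1))-4\chi(\O(t-2))=(t^2-1)+(t^2+t)-2(t^2-t)=3t-1.
\]
So $\CC$ is a nonzero quotient sheaf of $\G$ with $\pp(\CC)=-1/3<5/6=\pp(\G)$, contradicting the semi-stability of $\G$. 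Therefore no $\F$ as in the statement exists.

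\medskip
\noi
The step I expect to be the real work is the first one: justifying that the Beilinson monad has precisely the stated form and, in particular, that the relevant block of $\f$ vanishes, so that $\CC$ is genuinely a quotient of $\G$; and disposing of the small range $n\le 2$ in the case $a=1$. Once these structural facts are in place, the numerical estimates finish the argument at once.
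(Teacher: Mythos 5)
Your proof is correct, and while it uses the same machinery as the paper (the Beilinson free monad, the vanishing of the relevant blocks of its differentials, and the rank-zero quotient $\CC$), it closes the argument by a genuinely different route. The paper splits into the cases $\h^0(\F(-1))=0$ and $\h^0(\F(-1))=1$, derives the single bound $m\le p+1$ (in the second case after dualising back to a monad for $\F$ via lemma 3 of \cite{maican-duality}), and contradicts the hypothesis only at the very end through the count $\h^0(\F(1))\ge 2p+2\ge 8>7=\chi(\F(1))$, which forces $\h^1(\F(1))>0$. You stay on the $\G$-side throughout and feed the hypothesis in early, as $\h^0(\G(-2))=0$: chasing the twisted resolution gives the lower bound $n\ge 3b-4$, which played against the upper bound $n\le b+1+a$ leaves only $(a,b,n)=(1,3,5)$, and that case is killed by the destabilising quotient with $\PP_{\CC}(t)=3t-1$. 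That last computation is exactly the one the paper uses in its second case to improve $m\le p+2$ to $m\le p+1$ (there $\PP_{\CC}(t)=pt-1$), so your boundary case costs nothing new; what you gain is that the second dualisation is never needed. Your list of the remaining ``real work'' is accurate, and each of those structural facts --- the shape of the monad, the injectivity of $\psi$, the vanishing of the block from $(n+4)\O(-1)$, and $n\ge 3$ when $a=1$ (which in fact follows automatically from exactness of the monad at its last term, so the range $n\le 2$ is vacuous) --- is asserted and used in the same way in propositions \ref{5.1}, \ref{5.6} and \ref{7.1}, so the deferrals are legitimate.
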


\begin{proof}
Assume that $\F$ gives a point in $\M(6,1)$ and satisfies the condition $\h^1(\F) \ge 3$.
Write $p = \h^1(\F)$, $m= \h^0(\F \tensor \Om^1(1))$. We will examine two cases, according
to the value of $\h^0(\F(-1))$.
Assume first that $\h^0(\F(-1))=0$. The Beilinson free monad for $\F$ reads
\[
0 \lra 5\O(-2) \oplus m\O(-1) \lra (m+4)\O(-1) \oplus (p+1)\O \stackrel{\psi}{\lra} p\O \lra 0,
\]
\[
\psi = \left[
\ba{cc}
\eta & 0
\ea
\right],
\]
and yields a resolution
\[
0 \lra 5\O(-2) \oplus m\O(-1) \stackrel{\f}{\lra} \Ker(\eta) \oplus (p+1)\O \lra \F \lra 0
\]
in which $\f_{12}=0$. From the injectivity of $\f$ we see that $m+4-p = \rank(\Ker(\eta)) \le 5$.
Thus
\[
\h^0(\F(1)) = 3(p+1) + \h^0(\Ker(\eta)(1)) - m \ge 2p+2 \ge 8
\]
forcing $\h^1(\F(1)) > 0$.

\medskip

\noi
Assume next that $\h^0(\F(-1))=1$. The Beilinson free monad for the dual sheaf $\G= \F^\D(1)$ reads
\[
0 \lra p\O(-2) \lra (p+1)\O(-2) \oplus (m+4)\O(-1) \lra m\O(-1) \oplus 6\O \lra \O \lra 0
\]
and yields the resolution
\[
0 \lra p\O(-2) \lra (p+1)\O(-2) \oplus (m+4)\O(-1) \lra \Om^1 \oplus (m-3)\O(-1) \oplus 6\O \lra \G \lra 0,
\]
hence the resolution
\begin{multline*}
0 \lra p\O(-2) \lra \O(-3) \oplus (p+1)\O(-2) \oplus (m+4) \O(-1) \lra \\
3 \O(-2) \oplus (m-3) \O(-1) \oplus 6\O \lra \G \lra 0.
\end{multline*}
As in the proof of 2.1.4 \cite{mult_five}, the map $(p+1)\O(-2) \to 3\O(-2)$ has rank $3$,
hence we may cancel $3\O(-2)$ to get the exact sequence
\begin{multline*}
0 \lra p\O(-2) \lra \O(-3) \oplus (p-2)\O(-2) \oplus (m+4) \O(-1) \lra \\
(m-3) \O(-1) \oplus 6\O \lra \G \lra 0.
\end{multline*}
Since $\G$ maps surjectively onto the cokernel $\CC$ of the morphism
\[
\O(-3) \oplus (p-2)\O(-2) \lra (m-3)\O(-1),
\]
we have $m-3 \le p-1$.
Moreover, if $m-3 = p-1$, then $\CC$ has Hilbert polynomial $\PP(t)=pt-1$, hence $\CC$ destabilises
$\G$. Thus we have the inequality $m \le p+1$.
According to \cite{maican-duality}, lemma 3, we may dualise the above resolution to get a monad for
$\F$ of the form
\[
0 \lra 6\O(-2) \oplus (m-3)\O(-1) \lra (m+4)\O(-1) \oplus (p-2)\O \oplus \O(1) \stackrel{\psi}{\lra} p\O \lra 0,
\]
\[
\psi = \left[
\ba{ccc}
\eta & 0 & 0
\ea
\right].
\]
This yields the resolution
\[
0 \lra 6\O(-2) \oplus (m-3)\O(-1) \lra \Ker(\eta) \oplus (p-2)\O \oplus \O(1) \lra \F \lra 0.
\]
Thus
\[
\h^0(\F(1)) = 3(p-2) + 6 + \h^0(\Ker(\eta)(1)) -(m-3) \ge 2p+2 \ge 8,
\]
forcing $\h^1(\F(1)) > 0$.
\end{proof}

\begin{prop}
\label{7.3}
There are no sheaves $\F$ giving points in $\M(6,1)$ and satisfying the cohomological
condition $\h^0(\F(-1))=2$.
\end{prop}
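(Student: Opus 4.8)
\noi
The plan is to suppose such a sheaf $\F$ exists and to derive a contradiction, by first extracting a plane curve from a section of $\F(-1)$ and then passing to the dual sheaf $\G=\F^{\D}(1)$. So assume $\F$ gives a point in $\M(6,1)$ with $\h^0(\F(-1))=2$; since $\chi(\F(-1))=-5$ we also have $\h^1(\F(-1))=7$. A nonzero section of $\F(-1)$ provides an injection $\O_C\hookrightarrow\F(-1)$, where $C$ is the effective divisor cut out by the annihilator of the section: the image $\O_{\P^2}/\mathrm{ann}$ is a subsheaf of the pure sheaf $\F(-1)$, hence pure of dimension one, so its annihilator is invertible. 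Put $d=\deg C$. As $\F(-1)$ is semi-stable of slope $-5/6$ and $\pp(\O_C)=(3-d)/2$, we get $5\le d\le6$, the upper bound because $\F(-1)$ has multiplicity $6$. If $d=5$, the saturation $\widetilde{\O_C}\subset\F(-1)$ has multiplicity $5$, and $\chi(\O_C)\le\chi(\widetilde{\O_C})$ while semi-stability gives $\chi(\widetilde{\O_C})\le-25/6$, hence $\chi(\widetilde{\O_C})=\chi(\O_C)=-5$; so $\O_C$ is saturated and $\F(-1)/\O_C$ is pure of multiplicity $1$ and Euler characteristic $0$, i.e. $\F(-1)/\O_C\isom\O_L(-1)$ for a line $L$, and then $0\to\O_C\to\F(-1)\to\O_L(-1)\to0$ together with $\h^0(\O_L(-1))=0$ forces $\h^0(\F(-1))=\h^0(\O_C)=1$, a contradiction.

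Hence $d=6$, $C$ is a sextic curve, and $\mathcal T:=\F(-1)/\O_C$ is zero-dimensional of length $\chi(\F(-1))-\chi(\O_C)=4$. The next step is to dualise the sequence $0\to\O_C\to\F(-1)\to\mathcal T\to0$ by applying ${\mathcal Hom}(-,\omega_{\P^2})$ and its derived functors. Using ${\mathcal Ext}^0(\F(-1),\omega_{\P^2})={\mathcal Ext}^0(\O_C,\omega_{\P^2})={\mathcal Ext}^1(\mathcal T,\omega_{\P^2})={\mathcal Ext}^2(\F(-1),\omega_{\P^2})=0$, together with ${\mathcal Ext}^1(\F(-1),\omega_{\P^2})=\F^{\D}(1)$ and ${\mathcal Ext}^1(\O_C,\omega_{\P^2})=\omega_C=\O_C(3)$, the long exact sequence collapses to
\[
0\lra\F^{\D}(1)\lra\O_C(3)\lra\mathcal T'\lra0,
\]
where $\mathcal T'={\mathcal Ext}^2(\mathcal T,\omega_{\P^2})$ is zero-dimensional of length $4$. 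Thus $\G=\F^{\D}(1)\isom\J_Z(3)$, with $Z\subset C$ a zero-dimensional subscheme of length $4$ whose ideal sheaf in $C$ is $\J_Z=\G(-3)$.

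It then remains to compute $\h^1(\G)$ in two ways. By duality, $\h^1(\G)=\h^0(\G^{\D})=\h^0(\F(-1))=2$, since $\G^{\D}=\F(-1)$. On the other hand, from $0\to\J_Z(3)\to\O_C(3)\to\O_Z\to0$, using $\h^1(\O_C(3))=\h^0(\O_C)=1$ (Serre duality on $C$, $\omega_C=\O_C(3)$) and $\H^0(\O_C(3))\isom\H^0(\O_{\P^2}(3))$ (as $\h^0(\O_{\P^2}(-3))=\h^1(\O_{\P^2}(-3))=0$), one gets $\h^1(\J_Z(3))=1+\big(\h^0(\mathcal I_Z(3))-6\big)$, where $\mathcal I_Z\subset\O_{\P^2}$. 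But a zero-dimensional subscheme of $\P^2$ of length $4$ always imposes four independent conditions on plane cubics, so $\h^0(\mathcal I_Z(3))=6$ (if $Z\subset L$ for a line $L$ then the cubics through $Z$ are exactly those containing $L$, giving $\h^0(\mathcal I_Z(3))=\h^0(\O_{\P^2}(2))=6$; otherwise $\mathcal I_Z$ is $3$-regular). Hence $\h^1(\G)=\h^1(\J_Z(3))=1$, contradicting $\h^1(\G)=2$.

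I expect the step requiring the most care to be the dualisation yielding $\G\isom\J_Z(3)$, that is, computing the ${\mathcal Ext}$ sheaves and keeping track of the twist; the slope estimates and the fact that a length-$4$ subscheme imposes independent conditions on cubics are routine.
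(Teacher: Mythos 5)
Your proof is correct, but it takes a genuinely different route from the paper's. The paper argues entirely inside the Beilinson-monad framework of Section 7: it bounds $m=\h^0(\F\tensor\Om^1(1))$ in terms of $p=\h^1(\F)$, deduces $\h^0(\F(1))\ge 2p+1\ge 9$ and hence $\h^1(\F(1))\ge 2$, and then invokes Proposition \ref{6.3}, whose resolution forces $\h^1(\F(1))\le 1$. You instead proceed geometrically: a section of $\F(-1)$ yields $\O_C\hookrightarrow\F(-1)$ (exactly the device the paper itself uses in Proposition \ref{7.4}), the slope estimates force $\deg C=6$ after the quintic case is excluded via the $\O_L(-1)$ quotient, sheaf-dualising the extension $0\to\O_C\to\F(-1)\to\mathcal T\to 0$ identifies $\G=\F^\D(1)$ with $\J_Z(3)$ for a length-$4$ scheme $Z\subset C$, and the contradiction $2=\h^1(\G)=\h^1(\J_Z(3))=1$ follows from the classical fact that a length-$4$ scheme imposes independent conditions on cubics. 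I checked the individual steps — the vanishing of the relevant ${\mathcal Ext}$ sheaves, the identification ${\mathcal Ext}^1(\O_C,\omega_{\P^2})=\O_C(3)$, the duality $\h^1(\G)=\h^0(\G^\D)=\h^0(\F(-1))$ (which is exactly the relation the paper uses when passing between $X_i$ and $X_i^\D$), and the count $\h^0(\J_Z(3))=\h^0(\I_Z(3))=6$ — and they all hold. What your approach buys is independence from Proposition \ref{6.3} and from the monad computations, and a closer kinship with the proof of \ref{7.4}; what the paper's approach buys is uniformity with the rest of Section 7, where \ref{7.1}--\ref{7.3} are all dispatched by the same cohomology bookkeeping on the Beilinson free monad. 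One stylistic remark: your formula $\h^1(\J_Z(3))=1+(\h^0(\I_Z(3))-6)$ silently uses $\h^0(\J_Z(3))=\h^0(\I_Z(3))$, which follows from $0\to\O(-3)\to\I_Z(3)\to\J_Z(3)\to 0$ and the vanishing of $\h^0(\O(-3))$ and $\h^1(\O(-3))$; it would be worth making that one line explicit.
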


\begin{proof}
Assume that there is $\F$ as in the proposition. 
Write $p = \h^1(\F)$, $m= \h^0(\F \tensor \Om^1(1))$.
The Beilinson free monad for $\F$ reads
\[
0 \lra 2\O(-2) \stackrel{\xi}{\lra} 7\O(-2) \oplus m\O(-1) \lra (m+4)\O(-1) \oplus (p+1)\O \lra p\O \lra 0.
\]
As in the proof of 3.2.5 \cite{mult_five}, we can show that $m \ge 6$ and that $\xi$ is equivalent
to the morphism represented by the matrix
\[
\left[
\ba{ccccccccc}
0 & \cdots & 0 & X & Y & Z & 0 & 0 & 0 \\
0 & \cdots & 0 & 0 & 0 & 0 & X & Y & Z
\ea
\right]^\T.
\]
We recall that the argument is based on the fact that there is no non-zero morphism
$\O_L(1) \to \F$ for any line $L \subset \P^2$.
According to \cite{maican-duality}, lemma 3, taking duals of the locally free sheaves in the above
monad yields a monad for the dual of $\F$. This monad gives the following resolution for
the sheaf $\G = \F^\D(1)$:
\[
0 \! \lra p\O(-2) \lra (p+1)\O(-2) \oplus (m+4)\O(-1) \lra 2\Om^1 \oplus (m-6)\O(-1) \oplus 7\O \lra \G \lra \! 0.
\]
This further leads to the resolution
\begin{multline*}
0 \lra p\O(-2) \lra 2\O(-3) \oplus (p+1)\O(-2) \oplus (m+4) \O(-1) \lra \\
6\O(-2) \oplus (m-6) \O(-1) \oplus 7\O \lra \G \lra 0.
\end{multline*}
Since $\G$ maps surjectively onto the cokernel $\CC$ of the morphism
\[
2\O(-3) \oplus (p+1)\O(-2) \lra 6\O(-2) \oplus (m-6)\O(-1),
\]
we have $m\le p+3$.
Moreover, if $m = p+3$, then $\CC$ has Hilbert polynomial $\PP(t)=(p-1)t-2$, hence $\CC$ destabilises
$\G$. We deduce that $m \le p+2$. As $m \ge 6$, we have $p \ge 4$.
As above, the dual monad for $\F$ takes the form
\[
0 \lra 7\O(-2) \oplus (m-6)\O(-1) \oplus 6\O \lra (m+4)\O(-1) \oplus (p+1)\O \oplus 2\O(1) \stackrel{\psi}{\lra} p\O \lra 0,
\]
\[
\psi = \left[
\ba{ccc}
\eta & 0 & 0
\ea
\right].
\]
Thus
\[
\h^0(\F(1)) = 3(p+1)+ 12+ \h^0(\Ker(\eta)(1))- (m-6)- 18 \ge 2p+1 \ge 9,
\]
forcing $\h^1(\F(1)) \ge 2$. According to \ref{6.3}, this is impossible.
\end{proof}

\begin{prop}
\label{7.4}
Let $\F$ be a sheaf giving a point in $\M(6,1)$. Then $\h^0(\F(-1))=0 $ or $1$.
\end{prop}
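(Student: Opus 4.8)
\emph{Plan.} The plan is to argue by contradiction, mimicking the proof of Proposition~\ref{7.3} with the value $2$ there replaced by an arbitrary integer $k\ge 3$. Suppose $\h^0(\F(-1))\ge 2$. By Proposition~\ref{7.3} we have $\h^0(\F(-1))\neq 2$, so we may set $k=\h^0(\F(-1))\ge 3$; write also $m=\h^0(\F\tensor\Om^1(1))$ and $p=\h^1(\F)$. As in the proofs of \ref{7.2} and \ref{7.3}, the Beilinson free monad for $\F$ (equivalently, the dual monad for $\G=\F^\D(1)$) has the form
\[
0 \lra k\O(-2) \stackrel{\xi}{\lra} (k+5)\O(-2)\oplus m\O(-1) \lra (m+4)\O(-1)\oplus(p+1)\O \lra p\O \lra 0.
\]

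First I would carry over, in this generality, the normalisation of $\xi$ from the proof of \ref{7.3}. Since $\F$ is semi-stable of slope $1/6$ and pure of dimension one, there is no non-zero morphism $\O_L(1)\to\F$ for a line $L\subset\P^2$: an injective one would violate semi-stability (as $\pp(\O_L(1))=2$), while a non-injective non-zero one would have zero-dimensional image, which $\F$ cannot contain. As in \cite{mult_five}, 2.1.4 and 3.2.5, this forces $\xi$ to be equivalent to a morphism whose $(k+5)\O(-2)$-component vanishes and whose $m\O(-1)$-component is a direct sum of $k$ Koszul blocks $(X,Y,Z)^\T$; in particular $m\ge 3k$, and $k$ copies of $\Om^1$ appear upon passing to cokernels. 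Dualising and simplifying exactly as in the proofs of \ref{5.1} and \ref{5.6}, one obtains a resolution of $\G$ in which $\G$ maps surjectively onto the cokernel $\CC$ of a morphism $k\O(-3)\oplus(p+1)\O(-2)\to 3k\O(-2)\oplus(m-3k)\O(-1)$; a rank count forces $m\le p+k+1$, and in the boundary case $m=p+k+1$ the Hilbert polynomial of $\CC$ exhibits it as a destabilising quotient of $\G$, so that in fact $m\le p+k$. Combining with $m\ge 3k$ we get $p=\h^1(\F)\ge 2k\ge 6$.

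Next I would count the sections of $\F(1)$. Dualising the monad gives, as in \ref{7.3}, a monad for $\F$ of the form
\[
0 \lra (k+5)\O(-2)\oplus(m-3k)\O(-1)\oplus 3k\O \lra (m+4)\O(-1)\oplus(p+1)\O\oplus k\O(1) \stackrel{[\,\eta\ 0\ 0\,]}{\lra} p\O \lra 0,
\]
hence a resolution $0\to(k+5)\O(-2)\oplus(m-3k)\O(-1)\oplus 3k\O\to\Ker(\eta)\oplus(p+1)\O\oplus k\O(1)\to\F\to 0$. Twisting by $\O(1)$ and taking cohomology (the first term acquires no higher cohomology after the twist) yields
\[
\h^0(\F(1)) = \h^0(\Ker(\eta)(1)) + 3(p+1) - m \ \ge\ 2p+3-k \ \ge\ 3k+3 \ \ge\ 12,
\]
where I used $m\le p+k$, $p\ge 2k$ and $\h^0(\Ker(\eta)(1))\ge 0$. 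Since $\chi(\F(1))=7$, this gives $\h^1(\F(1))=\h^0(\F(1))-7\ge 3k-4\ge 5$, in particular $\h^1(\F(1))\ge 2$.

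Finally, by Proposition~\ref{6.3} a sheaf in $\M(6,1)$ with $\h^1(\F(1))>0$ is isomorphic to $\J_Z(2)$ for a zero-dimensional scheme $Z$ of length $2$; twisting the resolution $0\to\O(-4)\oplus\O(-1)\to\O\oplus\O(1)\to\F\to 0$ by $\O(1)$ shows $\h^1(\F(1))=\h^2(\O(-3))=1$, contradicting $\h^1(\F(1))\ge 2$. Hence $\h^0(\F(-1))\le 1$. I expect the main obstacle to be the second paragraph: faithfully transcribing the normalisation of $\xi$ (following \cite{mult_five}, 3.2.5) to arbitrary $k$ and, above all, establishing the sharp inequality $m\le p+k$ — one must check that the boundary case $m=p+k+1$ genuinely destabilises $\G$, since any weaker bound would fail to force $p\ge 2k$ and the cohomology count would not close.
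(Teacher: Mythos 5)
Your route is genuinely different from the paper's. The paper proves Proposition \ref{7.4} geometrically: from a section of $\F(-1)$ it gets an injection $\O_C \to \F(-1)$, disposes of the quintic case directly, and in the sextic case uses the horseshoe lemma on a carefully chosen subextension $0 \to \O_C \to \F' \to \O_Z \to 0$ to exhibit $\O_Z$ as a direct summand of $\F'$, contradicting purity. You instead try to run the Beilinson-monad computation of Proposition \ref{7.3} for arbitrary $k=\h^0(\F(-1))\ge 3$. Your monad shape, the count $\h^0(\F(1))=\h^0(\Ker(\eta)(1))+3p+3-m$, and the endgame via \ref{6.3} (where indeed $\h^1(\J_Z(2)(1))=1$) are all correct, and the arithmetic closes provided $m\ge 3k$ and $m\le p+k$.

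The gap is exactly at the step you flag, and it is a mathematical obstruction, not a transcription issue: the normalisation of $\xi$ to $k$ disjoint Koszul blocks, equivalently the injectivity of the associated linear map $\C^k\otimes V \to \C^m$, which is what gives $m\ge 3k$. The paper only ever invokes this (via \cite{mult_five}, 3.2.5) for $k=2$, and the statement is not a formal consequence of $\xi$ being the first map of a monad. For instance, the ``staircase'' morphism $2\O(-2)\to 5\O(-1)$ with columns $(X,Y,Z,0,0)$ and $(0,X,Y,Z,0)$ is a subbundle inclusion with locally free, torsion-free cokernel, yet is not equivalent to two disjoint Koszul blocks; ruling such configurations out requires using $\Hom(\O_L(1),\F)=0$ and purity in a way that must be re-proved for each $k$, and nothing in your write-up does this for $k\ge 3$. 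Since $m\ge 3k$ is the sole source of $p\ge 2k$, without it you get no lower bound on $\h^0(\F(1))$ beyond $3p+3-m\ge 2p+3-k$, which for small $p$ need not exceed $7$, and the contradiction with \ref{6.3} evaporates. (A secondary, smaller issue: the bound $m\le p+k$ also rests on the same normal form, through the appearance of $k\Om^1$ and hence $3k\O(-2)$ in the resolution of $\G$.) As it stands the proof does not close; either supply the normalisation lemma for general $k$, or switch to the paper's direct argument, which avoids the monad entirely.
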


\begin{proof}
Assume that $\F$ gives a point in $\M(6,1)$ and satisfies the condition $\h^0(\F(-1))>0$.
As in the proof of 2.1.3 \cite{drezet-maican}, there is an injective morphism $\O_C \to \F(-1)$
for a curve $C \subset \P^2$. From the semi-stability of $\F$ we see that $C$ has degree $5$ or $6$.
In the first case $\F(-1)/\O_C$ has Hilbert polynomial $\PP(t)=t$ and has no zero-dimensional torsion.
Indeed, the pull-back in $\F(-1)$ of any non-zero subsheaf of $\F(-1)/\O_C$ supported on finitely
many points would destabilise $\F(-1)$. We deduce that $\F(-1)/\O_C$ is isomorphic to
$\O_L(-1)$ for a line $L \subset \P^2$, hence $\h^0(\F(-1))=1$.

Assume now that $C$ is a sextic curve.
The quotient sheaf $\CC= \F(-1)/\O_C$ has support of dimension zero and length $4$.
Assume that $\h^0(\F(-1))> 1$. Then, in view of \ref{7.3}, we have $\h^0(\F(-1)) \ge 3$.
We claim that there is a global section $s$ of $\F(-1)$ such that its image in $\CC$
generates a subsheaf isomorphic to $\O_Z$, where $Z \subset \P^2$ is a zero-dimensional
scheme of length $1$, $2$ or $3$.
Indeed, as $\h^0(\O_C)=1$ and $\h^0(\F(-1))$ is assumed to be at least $3$,
there are global sections $s_1$ and $s_2$ of $\F(-1)$ such that their images in $\CC$
are linearly independent.
It is easy to see that there exists a subsheaf $\CC' \subset \CC$ of length $3$.
Choose $c_1, c_2 \in \C$, not both zero, such that the image of $c_1 s_1 + c_2 s_2$
under the composite map $\F(-1) \to \CC \to \CC/\CC'$ is zero.
Then $s=c_1 s_1 + c_2 s_2$ satisfies our requirements.

Let $\F' \subset \F(-1)$ be the preimage of $\O_Z$. Assume first that $Z$ is not contained in a line,
so, in particular, it has length $3$. According to \cite{modules-alternatives}, proposition 4.5, we have
a resolution
\[
0 \lra 2\O(-3) \lra 3\O(-2) \lra \O \lra \O_Z \lra 0.
\]
We apply the horseshoe lemma to the extension
\[
0 \lra \O_C \lra \F' \lra \O_Z \lra 0,
\]
to the standard resolution of $\O_C$ and to the resolution of $\O_Z$ from above.
We obtain the exact sequence
\[
0 \lra 2\O(-3) \lra \O(-6) \oplus 3\O(-2) \lra 2\O \lra \F' \lra 0.
\]
As the morphism $2\O(-3) \to \O(-6)$ in the above complex is zero and as $\Ext^1(\O_Z, \O)=0$,
we can show, as in the proof of 2.3.2 \cite{mult_five},
that $\O_Z$ is a direct summand of $\F'$.
This is absurd, by hypothesis $\F(-1)$ has no zero-dimensional torsion.
The same argument applies if $Z$ is contained in a line and has length $3$,
except that this time we use the resolution
\[
0 \lra \O(-4) \lra \O(-3) \oplus \O(-1) \lra \O \lra \O_Z \lra 0.
\]
The cases when $Z$ has length $1$ or $2$ are analogous.
We conclude that $\h^0(\F(-1))=1$.
\end{proof}

\end{document}